\newcommand{\urlwofont}[1]{\urlstyle{same}\url{#1}}
\newcommand{\nc}{\newcommand}
\nc{\nt}{\newtheorem}
\nc{\dmo}{\DeclareMathOperator}
\theoremstyle{plain}
\newtheorem{theorem}{Theorem}[section]
\newtheorem{maintheorem}{Theorem}
\newtheorem{proposition}[theorem]{Proposition}
\newtheorem{lemma}[theorem]{Lemma}
\newtheorem{corollary}[theorem]{Corollary}
\newtheorem{question}[theorem]{Question}
\newtheorem{problem}[theorem]{Problem}
\newtheorem{claim}{Claim}
\theoremstyle{definition}
\theoremstyle{remark}
\newtheorem{remark}[theorem]{Remark}
\newtheorem{example}[theorem]{Example}
\DeclareMathOperator{\Mod}{Mod}
\dmo{\SMod}{SMod}
\dmo{\PMod}{PMod}
\dmo{\SHomeo}{SHomeo}
\dmo{\SI}{\mathcal{SI}}
\dmo{\SSp}{SSp}
\dmo{\PSp}{PSp}
\newcommand\R{\ensuremath{\mathbb{R}}}
\newcommand\C{\ensuremath{\mathbb{C}}}
\newcommand\Z{\ensuremath{\mathbb{Z}}}
\nc{\p}[1]{\noindent {\bf #1.}}
\nc{\margin}[1]{\marginpar{\scriptsize #1}}
\nc{\PartialIBases}{\mathfrak{IB}}
\nc{\PartialIBasesEx}{\widehat{\mathfrak{IB}}}
\nc{\PartialBases}{\mathfrak{B}}
\nc{\Building}{\mathfrak{T}}
\nc{\height}{\ensuremath{\text{ht}}}
\nc{\Poset}{\mathfrak{P}}
\nc{\Field}{\mathbb{F}}
\nc{\Link}{\ensuremath{\text{Link}}}
\nc{\Star}{\ensuremath{\text{Star}}}
\nc{\SymTorelli}{\ensuremath{\mathcal{SI}}}
\nc{\BTorelli}{\ensuremath{\mathcal{BI}}}
\dmo{\Braid}{\ensuremath{B}}
\dmo{\PureBraid}{\ensuremath{PB}}
\nc{\Hyper}{\ensuremath{\iota}}
\nc{\BigFreeProd}{\mathop{\mbox{\huge{$\ast$}}}}
\nc{\Quotient}{\ensuremath{\mathcal{Q}}}
\nc{\QuotientEx}{\ensuremath{\widehat{\mathcal{Q}}}}
\nc{\Presentation}[2]{\ensuremath{\text{$\langle #1$ $|$ $#2 \rangle$}}}
\nc{\SpGen}{\ensuremath{S_{\text{Sp}}}}
\nc{\SpRel}{\ensuremath{R_{\text{Sp}}}}
\nc{\QGen}{\ensuremath{S_{\mathcal{Q}}}}
\nc{\QRel}{\ensuremath{R_{\mathcal{Q}}}}
\nc{\PBs}{\ensuremath{T}}
\nc{\Qs}{\ensuremath{\overline{s}}}
\dmo{\PB}{PB}
\nc{\BIredg}{\mathcal{BI}_{2g+1}^{\text{red}}}
\nc{\BI}{\mathcal{BI}}
\dmo{\D}{D}
\dmo{\Stab}{Stab}
\dmo{\Surger}{Surger}
\nc{\I}{\mathcal{I}}
\renewcommand{\C}{\mathcal{C}}
\nc{\spanmap}{span}
\nc{\genbygen}[2]{\premonoid{#1}{#2}}
\nc{\premonoid}[2]{#1 \circledcirc #2}
\nc{\monoid}[2]{#1 \odot #2}
\nc{\G}{\Gamma}
\nc{\raag}{A_\Gamma}
\nc{\racg}{W_\G}
\nc{\raagdelt}{A_\Delta}
\dmo{\Aut}{Aut}
\dmo{\Out}{Out}
\nc{\Autraag}{\Aut(\raag)}
\nc{\Outraag}{\Out(\raag)}
\nc{\diag}{D_\G}
\nc{\Autraagdelt}{\Aut(A_\Delta)}
\nc{\glk}{\GL(k,\mathbb{Z})}
\nc{\GLn}{\GL_n(\mathbb{Z})}
\nc{\GLnt}{\GL_n(\mathbb{Z} / 2)}
\nc{\glkdelt}{\GL(k |\Delta| ,\mathbb{Z})}
\nc{\gldelt}{\GL(|\Delta| ,\mathbb{Z})}
\nc{\zkdelt}{\mathbb{Z}^{k|\Delta|}}
\nc{\join}{\mathcal{J}}
\nc{\pc}{\mathrm{PC}}
\dmo{\lk}{lk}
\dmo{\st}{st}
\dmo{\Inn}{Inn}
\newcommand{\what}{\widehat}
\nc{\pia}{\Pi \mathrm{A}}
\nc{\piaG}{\pia_\G}
\nc{\ppia}{\mathrm{P \Pi A}}
\nc{\ppiaG}{\mathrm{P \Pi A}_\G}
\nc{\epia}{\mathrm{E \Pi A}}
\nc{\epiaG}{\mathrm{E \Pi A}_\G}
\nc{\ptor}{\mathcal{PI}}
\nc{\ptorG}{\mathcal{PI}_\G}
\nc{\CGi}{C_\G(\iota)}
\nc{\pbc}{\mathfrak{B}^\pi}
\dmo{\Cay}{Cay}
\dmo{\rev}{rev}
\nc{\Autfn}{\Aut(F_n)}
\nc{\Outfn}{\Out(F_n)}
\dmo{\supp}{supp}
\dmo{\rk}{\mathrm{rk}}
\dmo{\PCT}{\mathrm{PCT}(\raag)}
\dmo{\PCTo}{\overline{\mathrm{PCT}}(\raag)}
\nc{\HOut}{\mathrm{HOut}(F_n)}
\nc{\STn}{\mathcal{ST}(n)}
\nc{\HMn}{\mathrm{HM}_n(\Z)}
\nc{\loc}{\mathcal{L}_n}
\nc{\loci}{\loc ^{\iota}}
\nc{\hloc}{\mathcal{HL}_n}
\nc{\hloci}{\hloc ^{\iota}}
\newcommand{\Sa}{\mathbb{S}}
\newcommand{\calI}{\mathcal{I}}
\newcommand{\calQ}{\mathcal{Q}}
\newcommand{\MG}{\mathcal{G}}
\newcommand{\Isom}{\text{Isom}}
\newcommand{\calT}{\mathcal{T}}
\newcommand{\GL}{\text{GL}}
\newcommand{\SL}{\text{SL}}
\newcommand{\MT}{\mathcal{A}}
\newcommand{\SO}{\text{SO}}
\newcommand{\overl}[1]{\overline{#1}}
\title{\vspace{-30pt} Hyperelliptic graphs and the period mapping on outer space}
\author{Corey Bregman and Neil J. Fullarton}
\begin{document}

%\author{Corey Bregman%
%\thanks{\textit{E-mail}: \texttt{Corey.J.Bregman@rice.edu}}}
%\author{Neil J. Fullarton%
%\thanks{ \textit{E-mail}: \texttt{Neil.Fullarton@rice.edu}}}
%\affil{Dept. of Mathematics, Rice University, Houston, TX 77005}

%\author[Bregman]{Corey~Bregman}
%\address{Dept.~of Math., Rice University, Houston TX 77005}
%\email{Corey.J.Bregman@rice.edu}
%
%\author[Fullarton]{Neil~J. Fullarton}
%\address{Dept.~of Math., Rice University, Houston TX 77005}
%\email{Neil.Fullarton@rice.edu}

\date{\today}

\newcounter{enumi_saved}

\maketitle
\vspace{-30pt}

\begin{abstract}The period mapping assigns to each rank $n$, marked metric graph $\G$ a positive definite quadratic form on $H_1(\G, \R)$. This defines maps $\hat \Phi$ and $\Phi$ on Culler--Vogtmann's outer space $CV_n$, and its Torelli space quotient $\mathcal{T}_n$, respectively. The map $\Phi$ is a free group analog of the classical period mapping that sends a marked Riemann surface to its Jacobian. In this paper, we analyze the fibers of $\Phi$ in $\mathcal{T}_n$, showing that they are aspherical, $\pi_1$-injective subspaces. Metric graphs admitting a \emph{hyperelliptic involution} play an important role in the structure of $\Phi$, leading us to define the \emph{hyperelliptic Torelli group}, $\STn \leq \Outfn$. We obtain generators for $\STn$, and apply them to show that the connected components of the locus of `hyperelliptic' graphs in $\mathcal{T}_n$ become simply-connected when certain degenerate graphs at infinity are added. \end{abstract}

%\margin{\textcolor{blue}{Corey's comments in blue.} \textcolor{red}{Neil's comments in red.}}

\section{Introduction}

The focus of this paper is the period mapping $\hat \Phi$, which assigns to each marked, rank-$n$ metric graph $\G$ an inner product on the graph's first homology, $H_1(\G, \R)$. This maps Culler--Vogtmann's `outer space' of such graphs to the symmetric space of positive definite real quadratic forms. The map $\hat \Phi$ descends to a map $\Phi$ on the Torelli space quotient $\mathcal{T}_n$ of $CV_n$, which consists of \emph{homology}-marked graphs (cf. \cite{BBM07}). It is with the map $\Phi$ on $\mathcal{T}_n$ that we will be more interested, and so we abuse terminology, also referring to $\Phi$ as the period mapping. 

The map $\Phi$ arises from an inner product on the homology of a graph, which was introduced by Kotani--Sunada \cite{KoSu00}, Nagnibeda \cite{Na97} and Caporaso--Viviani \cite{CaVi11} and was previously known to Culler--Vogtmann. It is the free group analog of the classical period mapping $J$ of algebraic geometry, which assigns to a homology-marked Riemann surface its period matrix in the Siegel upper half-plane. It is difficult in general to describe the image of the classical map $J$; this is the so-called Schottky problem, which has only been completely resolved in genus $\leq 4$ \cite{Gr12}. In contrast, the fibers of $J$ are well-understood, as it is a 2-to-1 branched cover onto its image. These problems' relative difficulties are inverted for the map $\Phi$: its image is in a sense completely classified, using graphical matroids (cf. \cite{Vall03}), but its fibers are often more complicated than mere discrete sets. Our first theorem characterizes the topological structure of these fibers.

\begin{maintheorem}\label{fibers}The connected components of any fiber of the period mapping $\Phi$ are aspherical, $\pi_1$-injective subspaces of Torelli space, $\mathcal{T}_n$.\end{maintheorem}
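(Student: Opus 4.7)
The plan is to pass to Culler--Vogtmann's outer space $CV_n$, which covers $\mathcal{T}_n$ via the quotient $p : CV_n \to \mathcal{T}_n$ by the Torelli subgroup of $\Outfn$ (after replacing by a torsion-free finite-index subgroup if necessary, so that $p$ is a genuine covering and $CV_n$ serves as the universal cover). Since the Torelli group acts trivially on $H_1$, one has $\hat\Phi = \Phi \circ p$, so $p^{-1}(\Phi^{-1}(Q)) = \hat\Phi^{-1}(Q)$ for any form $Q$. For a component $F$ of $\Phi^{-1}(Q)$ and any component $\tilde F$ of its preimage, the restriction $p|_{\tilde F}: \tilde F \to F$ is a covering; if $\tilde F$ is \emph{contractible} then simultaneously $F$ is aspherical (with universal cover $\tilde F$) and $F \hookrightarrow \mathcal{T}_n$ is $\pi_1$-injective, since the deck group $\pi_1(F)$ embeds into $\pi_1(\mathcal{T}_n)$. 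Hence the theorem reduces to showing that every connected component of $\hat\Phi^{-1}(Q) \subset CV_n$ is contractible.

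\textbf{Cell-by-cell structure.} Equip $CV_n$ with its standard simplicial decomposition whose open cells $\sigma(\Gamma, \phi)$ are parametrized by positive edge-length functions $\ell : E(\Gamma) \to \R_{>0}$ on marked graphs $(\Gamma, \phi)$. Using the defining formula
\[
\hat\Phi(\Gamma, \phi, \ell)(c, c') \;=\; \sum_{e \in E(\Gamma)} \ell(e)\, \langle c, e \rangle\, \langle c', e \rangle \qquad (c, c' \in H_1(\Gamma, \R)),
\]
one sees that $\hat\Phi$ restricted to $\sigma(\Gamma, \phi)$ is affine-linear in $\ell$. Consequently $\hat\Phi^{-1}(Q) \cap \sigma(\Gamma, \phi)$ is the interior of a convex polytope: the intersection of the positive orthant with an affine subspace. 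On boundary faces, corresponding to collapsing a forest $E_0 \subset E(\Gamma)$, a fiber point limits into the face precisely when the coordinates on $E_0$ go to zero, and such a limit lies in the fiber polytope of the boundary cell. Thus $\tilde F$ inherits a natural polytopal subcomplex structure in $CV_n$, with open cells that are convex.

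\textbf{Contractibility and main obstacle.} To conclude contractibility of $\tilde F$, two promising routes are: (a) install a piecewise-Euclidean metric on $CV_n$ compatible with its cell structure in which $\tilde F$ is locally convex, so that straight-line homotopies within $\tilde F$ yield a deformation retraction onto any chosen point; or (b) construct a Morse-type functional on $\tilde F$ (for example, a weighted total edge length, or a functional detecting deviation from an ``extremal'' minimal-edge representative) and flow along its negative gradient to retract onto a contractible lower-dimensional stratum, inducting on rank. The principal obstacle is controlling the local structure of $\tilde F$ at boundary strata where open cells corresponding to combinatorially distinct graph types meet---most notably, graphs related by Whitney-type flips, which are known to preserve the period form and thus connect fiber polytopes across cells. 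While each individual open cell contributes a convex polytope, one must verify that these polytopes align into a globally convex (or at least contractibly-linked) whole. Concretely, I expect the technical heart of the argument to be a lemma asserting that the link of any point of $\tilde F$ in the induced PL structure is contractible, which then yields contractibility of $\tilde F$ inductively via cell-by-cell coning.
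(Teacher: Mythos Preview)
Your reduction is correct and is essentially how the paper proceeds as well: since the Torelli group is torsion-free (no finite-index replacement needed), $p:CV_n\to\mathcal{T}_n$ is a covering, and both conclusions follow once the components of $\hat\Phi^{-1}(Q)$ in $CV_n$ are shown contractible. Your cell-by-cell observation that the fiber meets each open simplex in a convex polytope is also correct and useful.

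The gap is that you have not proved contractibility; you have only restated it as the goal and gestured at two routes. Route (a) is unlikely to succeed as written: the paper itself notes (via Bridson--Vogtmann) that the spine of $CV_n$ carries no CAT(0) metric, and remarks explicitly that fiber components whose splitting graph $\mathcal{C}(\Gamma)$ has four or more pieces are \emph{not} locally CAT(0) in the simplicial metric. So ``local convexity'' cannot be expected in general, and straight-line homotopies across cells will fail. Route (b) is not an argument yet; you would need to specify the functional and analyze its descending links, which amounts to rediscovering the structure the paper develops. Your mention of Whitney flips is a red herring here: Whitney 2-moves typically land in \emph{different} connected components of the fiber, so they do not obstruct contractibility of a single component.

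What the paper actually does to establish contractibility is a structural classification of the moves that stay inside a fiber component: (i) redistributing length within an equivalence class of pairwise-separating edges, and (ii) varying the wedge points along separating vertices. This exhibits each component as a quasi-fibration over a product of simplices (coming from (i)) whose fiber is a ``configuration space of wedges'' $W(\Gamma_1,\dots,\Gamma_k)$ (coming from (ii)). The base is contractible; the fiber is shown aspherical by an extension of Collins' argument for the pure symmetric automorphism group, using Culler--Vogtmann minimal posets and Quillen's poset lemma to retract a contractible piece of the spine onto the space of ``colored symmetric'' graphs. That combinatorial/poset argument is the missing idea in your proposal, and it is not recoverable from cellwise convexity alone.
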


The theorem is proved by showing that each fiber's connected components admit the structure of a quasi-fibration over a product of simplices, where the homotopy-fibers are configuration spaces of wedges of graphs. We prove this by decomposing a marked graph $\G$ in $\mathcal{T}_n$ along separating vertices and separating pairs of edges. It is then established that where and how these vertices and edges appear in $\G$ fully determines the fiber's connected component containing $\G$. 

Previously, Caporaso--Viviani gave a description of the fibers of a related mapping $\overl{ \Phi}$ on the full quotient $CV_n /\Outfn$ in terms of Whitney moves and 3-edge connectivizations \cite{CaVi11}.  In contrast, Theorem~\ref{fibers} provides a geometric description of the fibers on $\mathcal{T}_n$, and extends previous work of Owen Baker who, in his thesis \cite{Bak11}, explicitly calculated the possible fibers of $\Phi$ for $n=3$. 

Since 1-dimensional tropical varieties are finite metric graphs, $\Phi$ can also be seen as a tropical period map (see \cite{Mi06}, \cite{MiZh08}  for the tropical viewpoint).  Unlike the classical map $J$, the period mapping $\Phi$ is not a branched cover onto its image; indeed, the proof of Theorem~\ref{fibers} yields many examples of non-discrete fibers. However, when restricted to a `large' subspace of $\mathcal{T}_n$, the map $\Phi$ becomes a 2-to-1 branched cover onto its image. The branching occurs along the locus of so-called `hyperelliptic graphs', which consists of (homology-marked) graphs admitting an order 2 isometry that acts as negation on the graph's first homology. These graphs, and the map $\overl{\Phi}$, were studied previously by Chan \cite{Cha12}, \cite{Cha13}.

Hyperelliptic graphs are a geometric incarnation of a certain group $\HOut$ of `hyperelliptic' outer automorphisms of the free group $F_n$. Fix $X$ as a preferred free basis of $F_n$, and let $\iota$ denote the `hyperelliptic' involution in $\Autfn$ that inverts each $x \in X$. An automorphism $[ \theta ] \in \Outfn$ is \emph{hyperelliptic} if it centralizes the involution $[\iota] \in \Outfn$, and $\theta (X)$ is called a \emph{hyperelliptic basis} of $F_n$. We let $\STn$ denote the subgroup of hyperelliptic automorphisms inside the Torelli group $\mathcal{I}_n \leq \Outfn$. The groups $\HOut$ and $\STn$ are in direct analogy with an oriented surface's hyperelliptic mapping class group and its hyperelliptic Torelli subgroup, respectively; moreover, they enjoy the same relationship with $\calT_n$ as these groups of mapping classes do with the moduli space of homology-marked Riemann curves. 

A basic example of a member of $\STn$ is a \emph{doubled commutator transvection}. This is an automorphism that, for some $a, b$ and $c$ in a hyperelliptic basis $Y$, maps $c$ to \[ [a,b] \cdot c \cdot [b^{-1}, a^{-1}], \] and fixes the other members of $Y$. We prove the following theorem, establishing that such automorphisms suffice to generate $\STn$.

\begin{maintheorem}\label{hyptor} The group $\STn$ is generated by the set of doubled commutator transvections.
\end{maintheorem}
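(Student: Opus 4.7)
The strategy is to combine Magnus's generation of $\I_n$ by commutator transvections with an $\iota$-symmetrization argument. First, a direct computation shows that each doubled commutator transvection (DCT) $\phi: c \mapsto [a,b]\, c\, [b^{-1}, a^{-1}]$ (fixing the remaining elements of a hyperelliptic basis $Y$) lies in $\STn$: using $\iota([a,b]) = [a^{-1}, b^{-1}] = [b^{-1}, a^{-1}]^{-1}$, one checks that $\phi \iota = \iota \phi$ in $\Autfn$. The essential observation is that Magnus's standard generator $M_{ijk} \in \I_n$ (acting by $x_i \mapsto [x_j, x_k]\, x_i$ and fixing other basis elements of $X$) has $\iota$-symmetrization
\[ \tilde M_{ijk} \;:=\; M_{ijk} \cdot \iota M_{ijk} \iota^{-1} \;=\; \bigl(\, x_i \mapsto [x_j, x_k] \cdot x_i \cdot [x_k^{-1}, x_j^{-1}] \,\bigr), \]
which is exactly a DCT in the basis $X$.

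Given an arbitrary $\phi \in \STn$, I would write $\phi = M_1 \cdots M_r$ as a product of Magnus generators of $\I_n$. Using $[\iota \phi \iota^{-1}] = [\phi]$ in $\Outfn$, the square satisfies
\[ \phi^2 \;=\; (M_1 \cdots M_r)\,(\iota M_1 \iota^{-1} \cdots \iota M_r \iota^{-1}) \quad \text{in }\Outfn. \]
The idea is to inductively commute each $(\iota M_i \iota^{-1})$ leftward past the $M_j$ with $j > i$, pair it with its partner $M_i$, and group the result as $\tilde M_i$. This expresses $\phi^2$ as a product of DCTs, modulo commutator corrections that must themselves be shown to lie in the DCT subgroup, which one handles by a lower-central-series induction on the nilpotent quotients of $\I_n$.

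The main obstacle is twofold. First, controlling the commutator corrections introduced when rearranging transvection factors demands careful use of the relations among Magnus generators in $\I_n$; one must verify at each filtration level that the corrections reduce to shorter DCT-products. Second, one must descend from $\phi^2 \in K$ (with $K$ the DCT-generated subgroup) to $\phi \in K$: one option is to show that $\STn/K$ is abelian of exponent two and combine this with torsion-freeness of $\I_n$ to force $\STn = K$; another is to refine the initial Magnus decomposition so that factors come in $\iota$-symmetric pairs from the outset, producing $\phi$ directly rather than $\phi^2$. A cleaner geometric alternative -- letting $\STn$ act on the contractible $[\iota]$-fixed subspace of outer space and extracting generators from cell stabilizers -- is not directly available here, since Theorem~\ref{hyptor} is intended as \emph{input} to the simply-connectedness results for the hyperelliptic locus in $\mathcal{T}_n$, rather than a consequence of them.
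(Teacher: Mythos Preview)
Your plan has genuine gaps at precisely the two points you flag, and neither of the suggested fixes goes through.

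\textbf{The descent from $\phi^2$ to $\phi$ is broken.} Your proposed argument is: show $\STn/K$ is abelian of exponent two, then invoke torsion-freeness of $\I_n$ to force $\STn = K$. But torsion-freeness of $\I_n$ (hence of $\STn$) says nothing about torsion in the \emph{quotient} $\STn/K$: quotients of torsion-free groups can have arbitrary torsion. So even if every $\phi \in \STn$ satisfied $\phi^2 \in K$, there is no contradiction; $\STn/K$ could perfectly well be a nontrivial elementary abelian $2$-group. The alternative you mention---arranging the Magnus factorization in $\iota$-symmetric pairs from the start---is exactly the hard content of the theorem, and you give no mechanism to achieve it.

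\textbf{The commutator corrections are not controlled.} When you commute $\iota M_i \iota^{-1}$ leftward past $M_j$, the correction $[M_j,\, \iota M_i \iota^{-1}]$ is an element of $\I_n$ that is not a DCT and need not lie in $K$; showing it does is essentially the original problem. The phrase ``lower-central-series induction on the nilpotent quotients of $\I_n$'' is not a method here: the lower central series of $\I_n$ does not terminate, and there is no graded structure in sight that turns these corrections into shorter DCT-words.

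For comparison, the paper's proof avoids both obstacles by working on the other side of the exact sequence $1 \to \STn \to \HOut \to \HMn \to 1$. A finite presentation of $\HMn$ (the image of $\HOut$ in $\GLn$) is computed via the extension $1 \to \Gamma_n[2] \to \HMn \to \Omega_{n+1} \to 1$, so that $\STn$ is normally generated in $\HOut$ by the lifts of the relators. These lifts are then checked, one relator type at a time, to be trivial, DCTs, or (via the earlier palindromic Torelli theorem) products of DCTs and separating $\pi$-twists; a final geometric argument with Dehn twists on a surface shows each separating $\pi$-twist is conjugate in $\HOut$ to a product of DCTs. No symmetrization of Magnus words and no squaring trick is needed.
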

Theorem~\ref{hyptor} builds upon work of the second author \cite{Ful15}, which found a similar generating set for a subgroup of $\Autfn$ related to $\STn$ called the \emph{palindromic Torelli group}. These two related groups share the same definition, but in different settings: both centralize $\iota$, but members of $\STn$ need only do so up to an inner automorphism of $F_n$. 

The group $\STn$ is isomorphic to the fundamental group of each connected component of the locus of hyperelliptic graphs in $\mathcal{T}_n$; indeed, each component is an Eilenberg--Maclane space for $\STn$. The generating set given by Theorem~\ref{hyptor} allows us to prove that the components of the hyperelliptic graph locus are simply-connected `at infinity'. Precisely, $CV_n$ embeds as a dense open subset of a simplicial complex $\overl{CV}_n$ arising alternatively as Hatcher's sphere complex \cite{Hat95} or the free-splitting complex \cite{HM13}. This simplicial description descends to Torelli space, where each component of the hyperelliptic locus may be completed to a simplicial complex by adjoining points corresponding to degenerate hyperelliptic graphs (that is, hyperelliptic graphs of rank strictly less than $n$). We prove the following.

\begin{maintheorem} \label{locusconn}The simplicial completion of each component of the locus of hyperelliptic graphs in Torelli space is simply-connected. \end{maintheorem}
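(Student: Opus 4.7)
The plan is to leverage Theorem~\ref{hyptor}. Fix a connected component $\mathcal{H}$ of the hyperelliptic locus in $\mathcal{T}_n$, and let $\overl{\mathcal{H}}$ be its simplicial completion, formed by adjoining simplices corresponding to degenerate hyperelliptic graphs of rank strictly less than $n$. Since $\mathcal{H}$ is aspherical with fundamental group $\STn$, the inclusion $\mathcal{H} \hookrightarrow \overl{\mathcal{H}}$ induces a surjection $\STn \twoheadrightarrow \pi_1(\overl{\mathcal{H}})$; indeed, loops in $\overl{\mathcal{H}}$ can be pushed off the positive-codimension boundary strata into $\mathcal{H}$. By Theorem~\ref{hyptor}, $\STn$ is generated by doubled commutator transvections, so it suffices to show that the loop $\gamma_\tau$ representing each such transvection $\tau$ bounds a disk in $\overl{\mathcal{H}}$.

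To build these disks, first fix a basepoint rose $R_Y \in \mathcal{H}$ marked by a hyperelliptic basis $Y = \{y_1, \ldots, y_n\}$ containing the triple $\{a, b, c\}$. A doubled commutator transvection $\tau = \tau_{a,b,c}$ fixes every basis element except $c$, so the corresponding loop $\gamma_\tau$ can be realized as an $\iota$-symmetric edge-path that modifies only the petals of $R_Y$ labeled $a$, $b$ and $c$, leaving the $n-3$ spectator petals untouched. Now consider the degenerate rank-$3$ graph $\overl{R}$ obtained by simultaneously collapsing the spectator petals to the basepoint vertex. The continuous family of partial collapses of the spectator petals yields a cylinder $C \subset \overl{\mathcal{H}}$ interpolating between $\gamma_\tau$ and its image $\overl{\gamma}_\tau$ in the rank-$3$ hyperelliptic stratum containing $\overl{R}$. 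This reduces the problem to showing that $\overl{\gamma}_\tau$ bounds a disk in the simplicial completion of the rank-$3$ hyperelliptic stratum, which is the $n = 3$ case of the theorem.

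The main obstacle is twofold. First, the cylinder construction must remain inside $\overl{\mathcal{H}}$ throughout; this requires checking that every partial collapse of the spectator petals preserves $\iota$-invariance of the homology marking, which follows because each spectator petal is setwise fixed by the hyperelliptic involution and so can be shrunk $\iota$-equivariantly. Second, the rank-$3$ base case demands a direct combinatorial verification: for each doubled commutator transvection $\tau_{a,b,c}$ in a rank-$3$ hyperelliptic basis, one must exhibit an explicit disk in the completion of the rank-$3$ hyperelliptic locus bounded by $\gamma_\tau$. Baker's analysis of the rank-$3$ hyperelliptic locus \cite{Bak11} provides the combinatorial framework for this verification, at which point Theorem~\ref{hyptor} assembles the generator-by-generator disks into a proof that $\pi_1(\overl{\mathcal{H}})$ is trivial.
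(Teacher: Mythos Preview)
Your overall architecture---show inclusion induces a surjection on $\pi_1$, then kill the generating loops from Theorem~\ref{hyptor}---matches the paper's. But both halves of your execution have genuine gaps.

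\textbf{Surjectivity is not free.} You assert that loops in $\overl{\mathcal{H}}$ can be ``pushed off the positive-codimension boundary strata'' as though this were a transversality triviality. It is not: for $n\geq 3$ the boundary $\partial\overl{\mathcal{H}}$ contains the entire $1$-skeleton of the simplicial completion, so a simplicial loop lives \emph{inside} the boundary, and two maximal simplices of $\mathcal{H}$ may meet only along a degenerate face. The paper handles this with a separate lemma (Lemma~\ref{simplexChain2}): given two maximal simplices sharing a degenerate face, one produces a chain of maximal simplices with \emph{non}-degenerate pairwise intersections, all coning onto the degenerate point. This is what lets one homotope a loop off the boundary, and it requires real work (a block-matrix analysis of the change-of-marking and realizing elementary matrices by explicit $\iota$-equivariant blow-up/blow-down paths). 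Your one-line justification does not supply this.

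\textbf{You shrink the wrong petals.} For the disk bounded by the loop of $\tau_{a,b,c}:c\mapsto [a,b]\,c\,[b^{-1},a^{-1}]$, you collapse the $n-3$ spectator petals, reducing to rank $3$, and then defer the base case to Baker. This is both circular (you invoke the $n=3$ case of the theorem you are proving) and incomplete (Baker's thesis computes fibers of $\Phi$, not the simplicial completion of the hyperelliptic locus). The paper avoids any base case by collapsing the \emph{commutator} petals $a,b$ instead: realizing $\tau_{a,b,c}$ as a concatenation of basic paths for $Q_{ca}Q_{cb}Q_{ca}^{-1}Q_{cb}^{-1}$, one shrinks the $a$- and $b$-loops to length zero. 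The basic paths then degenerate to constant paths (there is nothing left to drag $c$ over), so the loop contracts to a point at a rank-$(n-2)$ degenerate rose. This gives an explicit disk directly, with no inductive residue.
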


This theorem compares favorably with a theorem of Brendle--Margalit--Putman \cite[Theorem B]{BMP15}, who showed that the hyperelliptic locus in the space of homology-marked Riemann surfaces (that is, the branch locus of the classical period mapping) becomes simply-connected once surfaces of compact type are adjoined. Theorem~\ref{locusconn} also positively answers a question of Margalit, who asked if a free group analog of \cite[Theorem B]{BMP15} is true. In connection with tropical geometry, Chan--Galatius--Payne recently proved similar results about the connectivity of moduli spaces of graphs with marked points \cite{CGP16}.  

\textbf{Outline of paper.} In Section 2, we give the definition of Culler--Vogtmann's outer space of marked, metric graphs, and explain how to associate an inner product to such a graph to define the period mapping $\Phi$. The goal of Section 3 is to give a decomposition of a marked graph $\G$ along separating vertices and pairs of separating edges, which controls the fiber of the map $\Phi$ that $\G$ belongs to. This decomposition is considered further in Section 4, when we develop general machinery to describe how this decomposition may be altered, while remaining in a given fiber. In Section 5, we prove Theorem A. Turning to group theory, the aims of Sections 6 and 7, respectively, is to find generating sets for the groups $\HOut$ and $\STn$, proving Theorem B. Finally, in Section 8, we discuss the simplicial completion of outer space, in order to prove Theorem C.

\textbf{Conventions.} Functions are composed from right to left. For $g$ and $h$ in some group $G$, their commutator is denoted by $[g,h]$ and taken to be $ghg^{-1} h^{-1}$. The free group $F_n$ will have a preferred basis, which we denote by $X := \{ x_1, \dots, x_n \}$. The symmetric group on $X$ will be denoted $\Omega_n$. Given an automorphism $\theta \in \Autfn$, we denote its outer automorphism class as $[ \theta] \in \Outfn$. A \emph{cycle} in a graph $\Gamma$ is a closed, oriented path, and will be thought of as a sum of oriented edges in $C_1(\G)$. By assumption, cycles will be immersed (that is, may only cross over any edge at most once).

\textbf{Acknowledgements.} Both authors are grateful to Tara Brendle and Andrew Putman for their encouragement and guidance, and to Dan Margalit, whose question was the genesis of this project. They also thank Ruth Charney and Karen Vogtmann for helpful conversations, and are indebted to James Griffin for several clarifying conversations regarding a previous draft of this paper. Part of this work was completed while the authors were in residence at the Mathematical Sciences Research Institute in Berkeley, California, during the Fall 2016 semester.  

%%%%%%%%%%%%%%%%%%%%%%%%%%%%%%%%%%%%%%%%
%%%%%%%%%%%%%%%%%%%%%%%%%%%%%%%%%%%%%%%%

\section{The period mapping on Torelli space}

In this section we define the relevant spaces and maps we will be interested in for the remainder of the paper. Here we introduce outer space, Torelli space, the moduli space of graphs, the space of positive definite quadratic forms, and the moduli space of flat tori, as well as the various maps between them.  
\subsection{Outer space}

Let $CV_n$ denote Culler--Vogtmann `outer space'.  This is the space of marked metric graphs of volume 1 (without valence one or two vertices) whose fundamental group is $F_n$ the free group of rank $n$ \cite{CV86}.  This space admits a faithful, properly discontinuous action of the outer automorphism group $\Out(F_n)$, with quotient $\MG_n$, the moduli space of normalized metric graphs of rank $n$ without valence one or two vertices.  

It is well-known that the abelianization map $F_n\rightarrow \Z^n$ induces a surjection  $\Out(F_n)\rightarrow \GL_n(\Z)$, giving rise to a short exact sequence:\[1\rightarrow \calI_n\rightarrow \Out(F_n)\rightarrow \GL_n(\Z)\rightarrow 1,\] where the kernel $\calI_n$ is the so-called Torelli subgroup, or the group of (outer) automorphisms of $F_n$ that induce the identity on $H_1(F_n)\cong \Z^n$.  We will refer to the quotient of the action of $\calI_n$ on $CV_n$ as \emph{Torelli space} and denote it $\calT_n$.  Since $\calI_n$ is torsion-free, its action on $CV_n$ is free; hence the map $CV_n\rightarrow \calT_n$ is a covering and we can identify the fundamental group $\pi_1(\calT_n)$ with $\calI_n$.

Outer space $CV_n$ deformation retracts onto a subspace of smaller dimension called its \emph{spine}, which we denote by $K_n$. The spine may be thought of as the geometric realization of a poset of marked (combinatorial) graphs, with the ordering being determined by collapsing trees inside graphs.

%The space $CV_n$ embeds into a simplicial complex, where interiors of maximal simplices correspond to marked trivalent graphs whose edge lengths sum to 1. This imbues $CV_n$ with a simplicial path metric, which descends to the quotient space $\calT_n$. We refer to both of these as \emph{simplicial metrics}.

\subsection{Positive definite quadratic forms}
We denote by $\calQ_n$ the symmetric space $\SL_n(\R)/\SO_n(\R)$ of noncompact type.  We can identify this space with the space of positive definite quadratic forms of rank $n$ by the spectral theorem via the mapping $A\in \SL_n(\R)$, $A\mapsto A^tA$, where $A^t$ denotes the transpose of $A$. An alternative description of $\calQ_n$ is as the space of marked lattices $\Z^n\hookrightarrow \R^n$, and hence the space of marked, flat $n$-dimensional tori.

There is an action of $\GL_n(\Z)$ on $\calQ_n$, given as follows: if $M\in \GL_n(\Z)$ and $P\in \calQ_n$ then $M \cdot P=M^tPM$.  This action corresponds to an integral change of basis for the lattice represented by $P$, and hence descends to an isometry between the two marked flat tori represented by $P$ and $M \cdot P$.  Conversely, any isometry between two flat tori induces an integral change of basis between their lattices hence is given by an element of $\GL_n(\Z)$. Hence we can identify the quotient $\calQ_n/ \GL_n(\Z)$ with the moduli space of flat $n$-dimensional tori up to isometry, which we denote by $\MT_n$.  Note that this action by $\GLn$ is not faithful; the kernel is the subgroup $\{\pm I\}$.

\subsection{The period mapping}
Associated to any metric graph $\Gamma$ is a certain inner product defined on 1-simplices as follows.  Let $e_1,\ldots, e_m$ denote the edges of $\G$, each with a chosen orientation, so that they form a basis for $C_1(\Gamma)$. Define \[\langle e_i, e_j\rangle_\Gamma=l(e_i)\cdot \delta_{ij},\]
where $l(e_i)$ is the length of the edge $e_i$ in $\Gamma$.  Extend this by linearity to the rest of $C_1(\Gamma)$. This inner product on $C_1(\Gamma)$ descends to an inner product on $H_1(\Gamma)\cong \Z^n$, which is clearly positive definite.  

Now suppose we are given a marking $\rho: R_n\rightarrow \Gamma$ where $\rho$ is a homotopy equivalence and $R_n$ is the wedge of $n$ circles. Order and orient these circles, and denote the loops they represent in $\pi_1(R_n)$ by $x_1, \dots , x_n$. The map $\rho$ also gives a homology marking for $\Gamma$ and we can define a map $\widehat{\Phi}:CV_n \rightarrow \calQ_n$ using the above inner product on $C_1(\Gamma)$:\[\widehat{\Phi}(\Gamma,\rho)=\frac{1}{\det(M)}M=\frac{1}{\det(M)}(m_{ij}), \mbox{ where $m_{ij}=\langle\rho_*(x_i),\rho_*(x_j)\rangle_\Gamma$}.\]

Since the action of the Torelli subgroup does not change the homology marking induced by $\rho$, it follows that $\widehat{\Phi}$ factors through Torelli space to give a map $\Phi:\calT_n\rightarrow \calQ_n$.  Moreover, it is not hard to see that $\Phi$ is equivariant with respect to the $\GL_n(\Z)$ actions on $\calT_n$ and $\calQ_n$ defined above, hence descends to a map $\overl{\Phi}:\MG_n\rightarrow \MT_n$. We will refer to $\overl{\Phi}$ as the \emph{Abel--Jacobi map} for rank $n$ graphs, since it is analogous to the classical Abel--Jacobi map for a Riemann surface. In this paper we will be most interested in understanding the middle map $\Phi$ and in particular its fibers.  We end the section by summarizing all of the spaces and maps just defined in the following commutative diagram.\[ \xymatrixrowsep{5pc}\xymatrixcolsep{5pc} \xymatrix{
  CV_n\ar[d]^{\cdot/\calI_n} \ar[dr]^{\widehat{\Phi}}  \\
\calT_n\ar[d]^{\cdot/\GL_n(\Z)}\ar[r]^{\Phi} &  \calQ_n\ar[d]^{\cdot/\GL_n(\Z)}\\
\MG_n\ar[r]^{\overl{\Phi}} & \MT_n
  }
\]

Since $\Phi$ is $\GLn$-equivariant, to determine the fiber containing a marked graph $(\Gamma, \rho)$, we may make a strategic change of the homology marking on $\Gamma$. This will be our approach in the following section.

%%%%%%%%%%%%%%%%%%%%%%%%%%%%%%%%%%%%%%%%
%%%%%%%%%%%%%%%%%%%%%%%%%%%%%%%%%%%%%%%%

\section{A Canonical Decomposition of a Graph}\label{Decomp}
Let $\Gamma=(V,E)$ be a graph with vertex set $V$ and edge set $E$. Moving around in outer space corresponds to blowing up and collapsing forests in marked rank $n$ graphs.  In this section we will be interested in what operations on graphs (blowing up edges, collapsing edges) do not alter the inner product on the graph's homology. 
\subsection{Separating edges}
As a first observation, the following proposition tells us that separating edges do not affect the image of $\Phi$. 

\begin{proposition} Let $(\Gamma,\rho)$ be a homology-marked graph with a separating edge $e$, and let $\pi:\Gamma\rightarrow\Gamma/e$ be the map which collapses $e$ to a point.  Then $\Phi(\Gamma,\rho)=\Phi(\Gamma / e ,\pi\circ\rho)$.   
\end{proposition}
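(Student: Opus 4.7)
The plan is to exploit the elementary observation that a separating edge $e$ of $\Gamma$ cannot appear in any cycle, since removing $e$ disconnects $\Gamma$ and any $1$-cycle is supported on a single component. Consequently $e$ has coefficient $0$ in every element of $Z_1(\Gamma) = H_1(\Gamma)$, and since the inner product on $C_1(\Gamma)$ is diagonal in the edge basis, its restriction to $Z_1(\Gamma)$ is entirely insensitive to the length $l(e)$.

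Concretely, I would first set up the chain-level comparison. The collapse map $\pi: \Gamma \to \Gamma/e$ is a homotopy equivalence and induces a chain map $\pi_{\#}: C_1(\Gamma) \to C_1(\Gamma/e)$ that sends $e$ to $0$ and identifies each other edge (and its length) with its image. By the opening observation, $\pi_\#$ restricts to an isomorphism $Z_1(\Gamma) \to Z_1(\Gamma/e)$, and for any two cycles $c, c' \in Z_1(\Gamma)$ one gets
\[\langle \pi_\#(c), \pi_\#(c') \rangle_{\Gamma/e} \;=\; \langle c, c' \rangle_{\Gamma},\]
since both sides are just $\sum_{f \neq e} a_f(c) \, a_f(c') \, l(f)$. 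Functoriality gives $(\pi \circ \rho)_* = \pi_\# \circ \rho_*$, so applying the above identity to the marked cycles $\rho_*(x_i), \rho_*(x_j)$ shows that the Gram matrices $M$ for $(\Gamma, \rho)$ and for $(\Gamma/e, \pi \circ \rho)$ are literally equal.

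The one step that needs care is volume normalization: $\Gamma/e$ has total edge length $1 - l(e)$, so to view it as a point of $CV_n$ one must uniformly rescale, multiplying its Gram matrix by $c = 1/(1 - l(e))$. Under such a rescaling $M$ becomes $cM$, but the projective normalization in the definition of $\widehat\Phi$ is precisely designed to kill this positive scalar, so the images in $\calQ_n$ agree. I would confirm this scale-invariance by a one-line computation and then combine it with the chain-level equality above. The main (modest) obstacle is simply keeping track of this volume rescaling cleanly; the geometric content, that a separating edge contributes no information to the induced form on $H_1$, is immediate once one observes that such an edge lies outside $Z_1(\Gamma)$.
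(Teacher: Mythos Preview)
Your proposal is correct and follows essentially the same approach as the paper: both observe that a separating edge carries no cycle, hence contributes nothing to the inner product on $H_1$, and both note that the only remaining discrepancy is the volume rescaling, which is absorbed by the normalization in the definition of $\widehat\Phi$. Your version is simply more explicit about the chain-level bookkeeping and the rescaling step than the paper's two-sentence argument.
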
 
\begin{proof} Since $e$ is separating, $\pm e$ does not occur in any cycle on $\Gamma$. Thus, up to rescaling the metric on $\Gamma / e$ to be volume 1, the inner products on the given homology bases for the graphs are indistinguishable. \end{proof}

As a result of this proposition we see that $\widehat{\Phi}$ actually factors through so-called \emph{reduced} outer space. This is a subspace ${CV_n}'$ consisting of those marked graphs without separating edges, onto which $CV_n$ strongly deformation retracts. For any graph $\Gamma$, the deformation retraction shrinks all of the separating edges of $\Gamma$ and simultaneously scales each of the other edges by a constant amount so that the volume 1 condition is preserved.  Note that ${CV_n}'$ is preserved by the action of $\Out(F_n)$, so in light of the above proposition, from now on we assume that $\Gamma$ has no separating edges.

\subsection{Separating vertices}
If $v\in V$ is a vertex, consider $\Gamma\setminus \{v\}=\coprod_{i=1}^k\Gamma_i$, where each $\Gamma_i$ is a connected component of $\Gamma\setminus \{v\}$.  For each $\Gamma_i$ we can consider the inclusion $d :\Gamma_i\rightarrow \Gamma$, and let $\overl{\Gamma_i}$ denote the topological closure of $d(\Gamma_i)$ (that is, the result of gluing the vertex $v$ back into $d(\Gamma_i)$). Then we define $\Gamma$ \emph{cut along} $v$ to be the graph $\Gamma||v := \coprod_{i=1}^k\overl{\Gamma_i}$. In particular, if $v\in V$ does not separate, then $\Gamma||v\cong \Gamma$. We write $\G || (v_1 \cup \dots \cup v_m)$ to mean the graph obtained by first cutting along $v_1$, then cutting the result along $v_2$, and so on, until $v_m$. We prove now that this process terminates in a unique way.

\begin{proposition}\label{maxsep} $\Gamma$ has a maximal decomposition along separating vertices, unique up to reordering components.  
\end{proposition}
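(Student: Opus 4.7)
The plan is to prove termination first, then establish uniqueness by identifying the final components of any maximal decomposition with a canonical invariant of $\Gamma$, namely its block decomposition.

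Termination is easy: cutting along a separating vertex $v$ of a component of $\Gamma || (v_1 \cup \dots \cup v_i)$ strictly increases the total number of connected components, while leaving the edge set unchanged and producing a graph with only finitely many vertices (it adds $k - 1$ copies of $v$, where $k$ is the number of components of the piece meeting $v$ after removal of $v$). Since any finite graph admits at most finitely many connected components, the iterative cutting procedure must stop after finitely many steps, with every resulting component containing no separating vertex.

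For uniqueness, I would recall the canonical \emph{block decomposition} of $\Gamma$: a \emph{block} of $\Gamma$ is a maximal connected subgraph with no separating vertex of its own. Under the standing assumption (from the previous subsection) that $\Gamma$ has no separating edges, each block is $2$-vertex-connected, and any two distinct blocks meet in at most one vertex of $\Gamma$, which is necessarily a separating vertex. Thus the collection of blocks of $\Gamma$ is an intrinsic invariant of $\Gamma$, unique up to reordering. The goal is to show that the final components of any maximal decomposition along separating vertices coincide with the blocks.

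The key lemma to prove is that if $v$ is any separating vertex of $\Gamma$ and $B$ is a block of $\Gamma$, then $B$ is contained in a single component of $\Gamma || v$. Indeed, if $v \notin B$ there is nothing to show; and if $v \in B$, then since $B$ is $2$-vertex-connected, $B \setminus \{v\}$ is connected and hence lies in a single component of $\Gamma \setminus \{v\}$, so regluing $v$ places $B$ entirely inside one component $\overline{\Gamma_i}$ of $\Gamma || v$. Moreover, a block of $\Gamma$ lying in one component of $\Gamma || v$ remains a block of that component. By induction on the number of cuts, every block of $\Gamma$ lies in a single component of any iterated cut. Conversely, when the process terminates, each final component has no separating vertex, so by maximality it must \emph{equal} the unique block of $\Gamma$ that contains it. Hence any maximal decomposition produces exactly the collection of blocks of $\Gamma$, proving uniqueness up to reordering.

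The main obstacle is the key lemma: one must verify carefully that cutting along one separating vertex neither destroys nor splits a block, and that the blocks of the pieces after a cut are precisely those blocks of $\Gamma$ lying inside the respective pieces. This hinges on the $2$-vertex-connectedness of blocks, which in turn relies on the assumption that $\Gamma$ contains no separating edges.
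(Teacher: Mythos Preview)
Your proof is correct, but it takes a genuinely different route from the paper's. The paper proves a small lemma (Lemma~\ref{sepgone}): after cutting at a separating vertex $v$, the resulting copies of $v$ no longer separate their components. Combined with the observations that cutting commutes, $(\Gamma||v)||v' \cong (\Gamma||v')||v$, and that cutting at a separating vertex never creates \emph{new} separating vertices, this shows that cutting along all separating vertices of the original $\Gamma$ (in any order) yields a well-defined graph with no separating vertices. Existence and uniqueness fall out simultaneously.

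Your approach is the classical block decomposition: you identify the terminal pieces of \emph{any} maximal cutting sequence with the blocks of $\Gamma$, an intrinsic invariant. This is slightly more work (you must check that blocks survive intact under each cut and that the blocks of a piece are exactly the blocks of $\Gamma$ lying in it), but it buys you an explicit description of the resulting components and connects the statement to standard graph theory. The paper's argument is shorter and more self-contained, but does not name the pieces. Either is perfectly adequate here; note that your appeal to 2-vertex-connectivity of blocks does rely, as you say, on the standing assumption of no separating edges, whereas the paper's lemma-based argument would go through without it.
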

The proof of this proposition can be deduced from the following easy lemma.

\begin{lemma}\label{sepgone}Let $v$ be a vertex of $\Gamma$, and let $\overl{\Gamma_1}$ be some component of $\Gamma||v$.  If $v_1$ is the vertex which corresponds to $v$ in $\overl{\Gamma_1}$, then $v_1$ does not separate $\overl{\Gamma_1}$.  In other words, $(\Gamma||v)||v_1\cong\Gamma||v$.

\end{lemma}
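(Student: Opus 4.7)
The lemma follows essentially immediately from unwinding the definition of the cut operation, so my plan is just to make that unwinding explicit and point to where the content lies.

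First, I would recall the construction of $\overline{\Gamma_1}$: by definition, $\Gamma_1$ is a connected component of $\Gamma \setminus \{v\}$, and $\overline{\Gamma_1}$ is obtained from $d(\Gamma_1) \subset \Gamma$ by reattaching the single vertex $v$, with a copy $v_1$ playing the role of $v$ and inheriting exactly those half-edges of $\Gamma$ at $v$ whose other endpoint lies in $\Gamma_1$. The key observation is then that removing $v_1$ from $\overline{\Gamma_1}$ recovers precisely $\Gamma_1$ as a topological space, i.e.\ $\overline{\Gamma_1} \setminus \{v_1\} \cong \Gamma_1$.

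Next, I would invoke connectedness: by definition of $\Gamma_1$ as a connected component, $\Gamma_1$ is connected, hence $\overline{\Gamma_1}\setminus\{v_1\}$ is connected. This says precisely that $v_1$ is not a separating vertex of $\overline{\Gamma_1}$. Finally, I would recall the definition of cutting along a non-separating vertex: since the set of components of $\overline{\Gamma_1}\setminus\{v_1\}$ has a single element, the cut $\overline{\Gamma_1}||v_1$ consists of a single piece, namely $\overline{\Gamma_1}$ itself (the closure of the unique component, with $v_1$ glued back in, is just $\overline{\Gamma_1}$). Applying this to each component of $\Gamma||v$ at its copy of $v$ yields $(\Gamma||v)||v_1 \cong \Gamma||v$.

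There is no real obstacle here; the only thing to be slightly careful about is that the ``copy'' $v_1$ of $v$ in $\overline{\Gamma_1}$ carries only those half-edges of $v$ leading into $\Gamma_1$, so that deleting $v_1$ from $\overline{\Gamma_1}$ genuinely returns $\Gamma_1$ rather than some larger or disconnected subgraph. Once that convention is fixed, the argument is a one-line consequence of the definitions and can be used immediately to drive the inductive proof of Proposition~\ref{maxsep}: any maximal decomposition is obtained by iteratively cutting along separating vertices, and Lemma~\ref{sepgone} guarantees that each vertex cut along becomes non-separating in the resulting pieces, so the process terminates, while uniqueness up to reordering follows from the fact that the collection of separating vertices in a component is unaffected by cuts performed in other components.
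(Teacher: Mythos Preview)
Your proof is correct and follows essentially the same approach as the paper: both arguments reduce to the observation that $\overline{\Gamma_1}\setminus\{v_1\}$ is exactly $\Gamma_1$, which is connected by definition of being a connected component. The paper phrases this by noting that the midpoints of the edges incident at $v_1$ can all be joined by paths in $\Gamma_1$, but this is just a restatement of the connectedness you invoke directly.
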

\begin{proof} Let $e_1,\cdots, e_r$ be the edges of $\Gamma$ incident at $v$ that all belong to the connected component $\Gamma_1$.  The midpoint of each of these edges can be connected in $\Gamma\setminus\{v\}$ by a path lying entirely in $\Gamma_1$.  Each of these edges are in one to one correspondence with the edges incident at $v_1$ in $\overl{\Gamma_1}$, so the lemma follows.  
\end{proof}

We now finish the proof of the proposition.  
\begin{proof}[Proof of Proposition~\ref{maxsep}] From Lemma~\ref{sepgone}, it follows that if $v_1,\ldots,v_m$ are the separating vertices of $\Gamma$, then $(\dots(\Gamma||v_1)||v_2)\dots)||v_m$ has no separating vertices, since cutting along $v_1, \dots, v_m$ certainly does not produce any new separating vertices in the components. Obviously we have $(\Gamma||v)||v'\cong(\Gamma||v')||v$, so the order in which we cut separating vertices does not matter.  
\end{proof}

Define the \emph{component graph} $\mathcal{C}(\Gamma)$ to be the metric graph obtained by cutting $\Gamma$ at all of its separating vertices (that is, $\mathcal{C}(\Gamma) = \coprod_{v \in S} \Gamma || v$, where $S$ is the set of separating vertices in $\G$).  It may happen that some vertices made when forming $\C(\G)$ have valence two.  We forget these vertices when we define $\mathcal{C}(\Gamma)$, allowing the two edges incident at this vertex to become a single edge. We can think of $\Gamma$ as an iterated wedge of the components of $\mathcal{C}(\Gamma)$. As such we know that $H_1(\Gamma)=\bigoplus_{i=1}^mH_1(\tilde \Gamma_i)$, where $\tilde \Gamma_i$ are the components of $\mathcal{C}(\Gamma)$.  Then for any homology marking $\rho$ of $\Gamma$, there is an equivalent marking under the $\GL_n(\Z)$ action image under $\Phi$ that is block diagonal.  This observation leads us to the following proposition.  

\begin{proposition}\label{splittingblow-up} Let $v$ be a separating vertex of a homology-marked graph $(\Gamma,\rho)$.  Suppose $(\Gamma',\rho')$ is the result of blowing up a tree at $v$. Then $\Phi(\Gamma,\rho)=\Phi(\Gamma',\rho')$ if and only if $\C(\Gamma')$ is a blow-up of $\mathcal{C}(\Gamma)$.  
\end{proposition}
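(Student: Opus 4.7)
The plan is to reduce both implications to the combinatorial condition that \emph{every edge of the blown-up tree $T$ is a separating edge of $\G'$}. Under this condition, the non-bridge blocks of $\G'$ coincide with the blocks of $\G$ and each tree edge becomes a bridge-block in $\C(\G')$, which is precisely the content of ``$\C(\G')$ is a blow-up of $\C(\G)$''.

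First I would compute how the inner product on $H_1$ transforms under the tree blow-up. Let $\pi:\G'\to\G$ denote the collapse of $T$ and let $t=\sum_{e_T\in T}l(e_T)$ be the unnormalized total tree length. For $c'\in H_1(\G')$ with $c=\pi_*(c')$, a direct computation after rescaling $\G'$ to volume $1$ yields
\[ \langle c', c' \rangle_{\G'} \;=\; \tfrac{1}{1+t}\bigl(\langle c,c\rangle_\G + N(c',c')\bigr), \qquad N(c',d'):=\sum_{e_T\in T} n_{e_T}(c')\,n_{e_T}(d')\,l(e_T). \]
The ($\Leftarrow$) direction is then immediate: if every tree edge is separating, then $n_{e_T}(c')=0$ for every cycle, so $N\equiv 0$, and iterating the previous proposition on separating edges recovers $\Phi(\G,\rho)=\Phi(\G',\rho')$.

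For ($\Rightarrow$), assume $\Phi(\G,\rho)=\Phi(\G',\rho')$. Polarization of the above identity gives $N=\mu\,\langle\cdot,\cdot\rangle_\G$ as bilinear forms on $H_1(\G)\cong H_1(\G')$, where $\mu=(1+t)\lambda-1\geq 0$ and $\lambda$ is the proportionality constant. By the observation at the end of Section~2, we may choose a homology marking so that $\langle\cdot,\cdot\rangle_\G$ has block-diagonal matrix relative to the splitting $H_1(\G)=\bigoplus_iH_1(B_i)$ indexed by the blocks $B_i$ of $\G$; hence $N$ is block-diagonal as well. The main obstacle is to deduce $\mu=0$ from these constraints. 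Suppose toward contradiction that some $e_T\in T$ is non-separating; then some block $B_i$ has $v$-edges on both sides of the cut of $T$ at $e_T$. Using the $2$-edge-connectedness of $B_i$, one produces a class in $H_1(B_i)$ whose lift uses $e_T$ and another whose lift avoids $e_T$ by pairing $v$-edges on one side of $e_T$, giving incompatible $N$-to-$\langle\cdot,\cdot\rangle_\G$ ratios within $H_1(B_i)$---contradicting $N|_{H_1(B_i)}=\mu\,\langle\cdot,\cdot\rangle_\G|_{H_1(B_i)}$. For the residual small-block case in which every cycle of $B_i$ passes through $v$, one instead exploits cross-block vanishing of $N$: any tree edge shared by the Steiner subtrees of $T$ for two distinct blocks produces a nonzero off-block entry of $N$, again a contradiction. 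Therefore $\mu=0$, $N=0$, every tree edge is separating, and $\C(\G')$ is a blow-up of $\C(\G)$.
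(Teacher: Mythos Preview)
Your opening reduction is where the argument goes wrong. The claim that ``$\C(\G')$ is a blow-up of $\C(\G)$'' is equivalent to ``every edge of $T$ is a separating edge of $\G'$'' is false. Take $\G_1$ a theta graph on vertices $v,w$ with edges $a,b$, and $\G_2$ a loop $d$ at $v$; blow up a single edge $e$ at $v$ with $a$ on one side and $b,d$ on the other. Then $e$ is \emph{not} separating in $\G'$ (the path $a\cdot b^{-1}$ goes around it), yet cutting $\G'$ at its unique separating vertex yields a triangle $a,e,b$ together with the circle $d$, and the triangle is genuinely a blow-up of the theta $\G_1$. So the right-hand side of the proposition holds while your bridge condition fails. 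In the paper's language, the correct combinatorial content is that the Steiner subtrees $T_i\subset T$ spanned by the attachment points of each component $\G_i$ pairwise meet in at most a vertex; this allows an individual $T_i$ to contain edges, which then sit non-separatingly inside one component of $\C(\G')$.

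The paper's forward implication is also argued quite differently from yours. Rather than deriving a global proportionality $N=\mu\langle\cdot,\cdot\rangle_\G$ and then trying to force $\mu=0$, it works directly: whenever two Steiner subtrees $T_i$ and $T_j$ share an arc $b$, one constructs explicit primitive cycles $\alpha_i$ (through $\G_i$) and $\alpha_j$ (through $\G_j$) both traversing $b$. These satisfy $\langle\alpha_i,\alpha_j\rangle_{\G'}=l(b)\neq 0$, while their images in $\G$ lie in different homology blocks and hence pair to zero. This is precisely the ``cross-block'' contradiction you gesture at in your last sentence, but done by construction rather than via bilinear-form manipulation---and crucially, it targets the correct condition ($T_i\cap T_j$ edgeless) rather than the stronger bridge condition.

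Even setting aside the wrong target, your forward argument has a real gap. The ``incompatible ratio'' step needs a cycle in $B_i$ whose lift \emph{avoids} the chosen tree edge $e_T$; but if $B_i$ has exactly two half-edges at $v$ lying on opposite sides of $e_T$ (as in the theta example above), every cycle of $B_i$ through $v$ lifts to use $e_T$, and no such avoiding cycle exists. You acknowledge this as a ``residual small-block case'', but the proposed fix via cross-block vanishing only produces a contradiction when \emph{two different} $T_i$'s share the edge---which is exactly the paper's weaker conclusion, not your stronger one.
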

\begin{proof}

%We will prove the proposition when the tree is a single edge.  The proof for a general tree is similar.  First we observe that if $\Gamma$ has no separating edges and $v$ is a separating vertex of $\Gamma$ then each component of $\Gamma\setminus \{v\}$ has at least two edges incident at $v$. Consider two components $\Gamma_1$ and $\Gamma_2$ of $\Gamma\setminus \{v\}$. It suffices to show that if $e_1,e_2$ are two edges of $\Gamma_1$ incident at $v$ and $f_1,f_2$ are two edges of $\Gamma_2$, then the new edge $b$ cannot separate $e_1$ and $f_1$ from $e_2$ and $f_2$ as in the figure. Since $\Gamma_1$ is connected, we can complete $e_1\cdot b\cdot e_2$ to a loop $\gamma_1\subset \Gamma_1$ and similarly we can complete $f_1\cdot b\cdot f_2$ to a cycle $\gamma_2\subset \Gamma_2$.  In $\Gamma'$ then we have $\gamma_1\cdot\gamma_2=l(b)$.  But every loop of $\Gamma_1$ and $\Gamma_2$ have trivial intersection in $\Gamma$.  
The reverse implication is immediate, by choosing a block diagonal marking of $\Gamma$ as in the discussion preceding the statement of the proposition. 

To prove the forward implication, let $\Gamma_1,\ldots, \Gamma_r$ be the components of $\Gamma\setminus \{v\}$, and denote by $T$ the tree blown up at $v$ to form $\Gamma'$. Note that $\Gamma_i$ sits inside $\G '$ as a subspace; we denote its closure by $\overl{\G_i}$. For each component $\Gamma_i$, we obtain a subtree $T_i$ of $T$ consisting of all arcs in $T$ connecting the vertices of $\overl{\Gamma_i} \cap T$.  Observe that $T=\cup_i T_i$, since, if not, $\Gamma '$ would have a separating edge.  

To prove the proposition it suffices to show that for $i\neq j$, the graph $T_i\cap T_j$ is either empty or a single vertex (since this gives that $T$ may be blown up incrementally, inside one cut component of $\G$ at a time).  Suppose that, in fact, some $T_i$ and $T_j$ share an arc $b$. Let $b_i$ be an arc in $T_i$ containing $b$ and connecting two edges $e_1$ and $e_2$ of $\Gamma_i$.  Similarly, find $b_j$ an arc in $T_j$ connecting edges $f_1$ and $f_2$ of $\Gamma_j$ and containing $b$. Since $\Gamma_i$, $\Gamma_j$ are connected, $b_i$ and $b_j$ can be completed to non-trivial, primitive cycles $\alpha_i$ and $\alpha_j$ so that $\alpha_i$ contains $e_1$ and $e_2$ and $\alpha_j$ contains $f_1$ and $f_2$. Such a configuration can be seen in Figure~\ref{InsertEdge}.

After changing our basis perhaps, we can assume that $\alpha_i$ and $\alpha_j$ are part of the homology basis for $\Gamma'$, and their images under the collapse of $T$ are such for $\Gamma$.  Then in $\Gamma'$ we have that $\alpha_i\cdot\alpha_j=l(b)$, while in $\Gamma$ their images have trivial inner product.  This contradicts our premise that $\Phi(\Gamma,\rho)=\Phi(\Gamma',\rho')$.
\end{proof}

\begin{figure}[h]
\begin{center}
\begin{overpic}[width=2in]{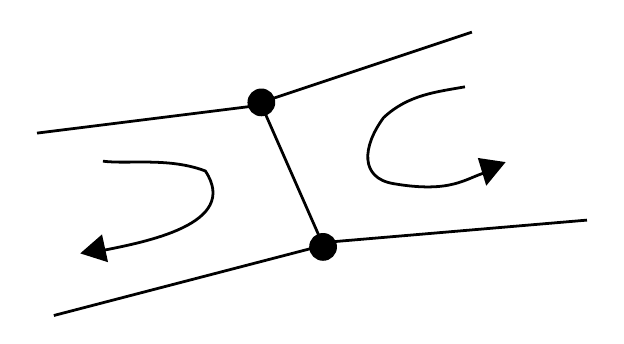}
\put(20,40){$e_1$}
\put(30,4){$e_2$}
\put(55,48){$f_1$}
\put(70,10){$f_2$}
\put(40,24){$b$}
\end{overpic}
\caption{Inserting an edge between two subgraphs $\Gamma_1$ and $\Gamma_2$.  The path $e_1be_2$ belongs to a cycle $\gamma_1\subset\Gamma_1$, and the path $f_1bf_2$ belongs to a cycle $\gamma_2\subset\Gamma_2$.}
\label{InsertEdge}
\end{center}
\end{figure}

Given $\Gamma$, consider $\mathcal{C}(\Gamma)=\coprod_{i=1}^k \tilde \Gamma_i$ and fix a homology marking of each $\tilde \G_i$. These induce a homology marking $\rho$ of $\Gamma$ that is block diagonal. Moreover, any wedge product of the components of $\C (\G)$ will also inherit a homology marking, and will lie in the same fiber of $\Phi$ as $(\G, \rho)$. All such products lie in the same connected component of the fiber, as the choice of wedge points may be varied by altering edge lengths in $\calT_n$. Proposition~\ref{splittingblow-up} implies that any single edge blow up of $\Gamma$ at a separating vertex may be achieved by carrying out a different wedging of the components of $\C(\G)$.

\subsection{Pairwise separating edges}
We now assume that $\Gamma$ does not have separating vertices. In practice, this is because we will be working inside the components of $\C(\G)$. Our goal will be to decompose $\Gamma$ further along collections of pairwise separating edges.  

Let $e$ and $f$ be edges of $\Gamma$.  We write $e\sim f$ if $e\cup f$ separates $\Gamma$ (necessarily into two components), and call $e$ and $f$ a \emph{separating edge pair}. Alternatively, we may say that $e$ and $f$ \emph{pairwise separate}. Obviously $e\sim f$ implies $f\sim e$.  In fact, this relation is transitive.  

\begin{lemma}\label{Equivalence}Let $e, f$ and $g$ be edges of $\Gamma$. If $e\sim f$ and $f\sim g$ then $e\sim g$.  
\end{lemma}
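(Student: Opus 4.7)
The plan is to set up the two separations and study how they interact. Let $A,B$ be the two components of $\Gamma\setminus\{e,f\}$ and $C,D$ the two components of $\Gamma\setminus\{f,g\}$. Since $\Gamma$ has no separating edge, each of $e$ and $f$ must have one endpoint in $A$ and one in $B$ (otherwise, deleting just the other one would already disconnect $\Gamma$); similarly for $f$ and $g$ with $C,D$. We may assume $e,f,g$ are pairwise distinct, else the conclusion is immediate. After a possible relabeling I may assume $g\subset A$ and $e\subset C$, i.e.\ both endpoints of $g$ lie in $A$ and both endpoints of $e$ lie in $C$.

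Next, I would consider the four intersection pieces $P=A\cap C$, $Q=A\cap D$, $R=B\cap C$, $S=B\cap D$ (as sets of vertices). The endpoints of $g$ lie in $P$ and $Q$, and the endpoints of $e$ lie in $P$ and $R$, so $P,Q,R$ are nonempty. The key observation is that the only edges of $\Gamma$ joining $A$ to $B$ are $e$ and $f$, and the only edges joining $C$ to $D$ are $f$ and $g$. Since $f$ has one endpoint in $A$ and $g$ has both endpoints in $A$, neither edge lies entirely inside $B$; therefore no edge of $\Gamma$ with both endpoints in $B$ crosses the $C/D$ partition. But $B$ is a connected subgraph of $\Gamma$, so it cannot have vertices in both $C$ and $D$. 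Combined with $R\neq\emptyset$, this forces $S=\emptyset$, i.e.\ $B\subset C$. Consequently the endpoint of $f$ in $B$ lies in $C$, so its endpoint in $A$ must lie in $D$, i.e.\ in $Q$.

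Finally, I would verify that $\{e,g\}$ separates $\Gamma$ by partitioning the vertices into $P$ and $Q\cup B$. Any edge of $\Gamma$ joining $P$ to $B$ would be an edge from $A$ to $B$, hence $e$ or $f$; but $e$ has been removed and $f$ connects $Q$ to $B$. Any edge joining $P$ to $Q$ would be an edge from $C$ to $D$ with both endpoints in $A$, hence $f$ or $g$; but $g$ has been removed and $f$ is not internal to $A$. So no edge of $\Gamma\setminus\{e,g\}$ crosses the partition, while $P$ and $Q\cup B$ are both nonempty, proving $e\sim g$.

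The main obstacle is the middle step: the argument that $S=\emptyset$, which is what translates the two separate separation hypotheses into a usable joint structure. Everything else is bookkeeping once the four quadrants and the no-separating-edge observation are in place.
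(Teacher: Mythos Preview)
Your proof is correct, but it takes a different route from the paper's. The paper argues by contradiction via a common cycle: since $e\sim f$ and $f\sim g$, any embedded cycle through $f$ must pass through both $e$ and $g$; assuming $e\nsim g$, a path in $\Gamma\setminus\{e,g\}$ between the two arcs of this cycle separated by $e$ and $g$ must avoid $f$, which then contradicts either $e\sim f$ or $f\sim g$. Your argument is instead a direct partition argument: you intersect the two given bipartitions $A\cup B$ and $C\cup D$ into four quadrants, use connectedness of $B$ together with the absence of $C$--$D$ edges internal to $B$ to force one quadrant empty, and then read off an explicit bipartition separated by $\{e,g\}$. Your approach avoids having to produce a cycle through all three edges and gives the separating partition constructively; the paper's approach is a bit more geometric and foreshadows the cycle-based arguments used later in the section (e.g.\ in Lemma~\ref{Forest} and Proposition~\ref{SepPair}).
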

\begin{proof} Since both $e$ and $f$, and $f$ and $g$ are separating edge pairs, we can find a cycle $\gamma$ in $\Gamma$ that travels over all three edges. Assume then for contradiction that $e\sim f$ and $f\sim g$ but $e\nsim g$. Without loss of generality, $e$, $f$, and $g$ appear clockwise around $\gamma$ in alphabetical order.  Let the three complementary regions $\gamma\setminus (e\cup f\cup g)$ be $\alpha_{ef}$, $\alpha_{fg}$ and $\alpha_{ge}$, respectively. Since $e\cup g$ does not separate, this means either there is a path in $\Gamma\setminus (e\cup g)$ from $\alpha_{ef}$ to $\alpha_{eg}$ or from $\alpha_{fg}$ to $\alpha_{eg}$.  This implies that in the case of the former, $e\cup f$ does not separate, while in the latter $f\cup g$ does not separate.  This contradiction shows that $e\cup g$ must separate.  
\end{proof}

Let $[e_1],\ldots, [e_r]$ denote the equivalence classes of edges generated by pairwise separation in $\Gamma$. Denote by $\overl{[e_i]}$ the subgraph of $\G$ consisting of all the edges in $[e_i]$ together with their endpoints. Each edge of $\overl{[e_i]}$ has two distinct vertices, and none are shared between edges, since any shared vertex would be separating in $\G$.  Moreover, we have the following lemma.

\begin{lemma}\label{Forest}Suppose $\Gamma$ does not have separating vertices. The subgraph $F_\G=\overl{[e_1]}\cup\cdots\cup\overl{[e_m]}$ is a forest.  
\end{lemma}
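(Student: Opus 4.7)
The strategy is proof by contradiction: suppose $F_\G$ contains a simple cycle $\gamma = a_1 a_2 \cdots a_k$ and derive a separating vertex in $\G$. First, I would invoke the observation outlined in the paragraph preceding the statement that each $\overl{[e_i]}$ is a matching: if $e, f \in [e_i]$ shared a vertex $v$, the components $A, B$ of the 2-cut $\{e, f\}$ together with the valence-$\geq 3$ condition at $v$ would force $v$ to be separating, since topologically removing $v$ isolates $A \setminus \{v\}$ (nonempty thanks to $v$'s extra edges) from $B$ (to which the ``half-edges'' of $e$ and $f$ attach). As a consequence, consecutive cycle edges $a_i, a_{i+1}$ of $\gamma$ always lie in distinct classes.

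Next, every class $[a_i]$ containing a cycle edge lies entirely on $\gamma$: an off-cycle partner $a_i' \in [a_i]$ would leave the arc $\gamma \setminus \{a_i\}$ intact in $\G \setminus \{a_i, a_i'\}$, reconnecting the endpoints of $a_i$ (which must lie in different components of the 2-cut, since $\G$ has no bridges) and contradicting the 2-cut property. Thus the equivalence classes partition $E(\gamma)$ into blocks of size $\geq 2$ in which adjacent blocks differ, and the small cases $k \leq 3$ collapse immediately under pigeonhole.

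For $k \geq 4$, I would introduce a region decomposition of $V(\G)$: each class $[A]$ of size $s$ yields $s$ cyclically-ordered components of $\G \setminus [A]$, and the joint component-assignment across all classes partitions $V(\G)$ into ``regions.'' Non-cycle edges always stay within a single region, since they survive every class removal. The goal is to find a cycle vertex $v_j$ that is the unique cycle vertex in its region: $v_j$'s valence forces a non-cycle neighbor in its region, whose only route out is via $v_j$'s cycle edges, so topologically removing $v_j$ disconnects it from the rest and makes $v_j$ separating---the desired contradiction. The main obstacle is exhibiting such a $v_j$; two cycle vertices $v_i, v_l$ can share a region only when every class $[A]$ has all of its edges lying in one of the two $\gamma$-arcs between $v_i$ and $v_l$, a condition that fails for pairs at cycle-distance $1, 2$, or $3$ by pigeonhole against the size-$\geq 2$ and consecutive-different constraints, and whose propagation across overlapping pairs forces incompatible demands on the partition of $E(\gamma)$, leaving at least one cycle vertex stranded in its own region.
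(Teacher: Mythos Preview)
Your setup is correct and matches the paper: classes entirely on $\gamma$, consecutive edges in distinct classes, and hence $k \geq 4$. From there your route diverges from the paper's and heads toward producing a separating vertex via a ``region'' decomposition. That idea is sound, and the reduction ``solo cycle vertex $\Rightarrow$ separating vertex'' is fine once you restrict the region decomposition to classes meeting $\gamma$ (otherwise the claim that non-cycle edges stay within a region fails for edges lying in classes disjoint from $\gamma$). But the crucial step---exhibiting a cycle vertex alone in its region---is not actually argued. ``Propagation across overlapping pairs forces incompatible demands'' is a hope, not a proof. The distance-$\leq 3$ observation is correct, but for $k \geq 8$ there \emph{can} be sharing pairs at larger distance, and one has to rule out the possibility that every vertex participates in one. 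A clean way to close this gap: take a sharing pair $(v_a,v_b)$ of \emph{minimal} cyclic distance $d$, and look at $v_{a+1}$. Its partner $v_q$ (if it has one) must lie on the long arc by minimality of $d$; but then the class $[a_{a+1}]$, which by the $(v_a,v_b)$-sharing lies entirely on the short arc, has the edge $a_{a+1}$ on one $(v_{a+1},v_q)$-arc and a second edge $a_p$ (with $p\geq a+3$) on the other, contradicting $(v_{a+1},v_q)$-sharing. So $v_{a+1}$ is solo. Without something like this, the proposal has a genuine hole.

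By contrast, the paper never seeks a separating vertex. It shows the cycle must contain an \emph{interleaved} subsequence $e\,f\,e'\,f'$ with $[e]=[e']\neq[f]=[f']$ (the ``nested'' case $e\,e'\,f\,f'$ being reduced to this by an innermost-interval argument), and then derives a direct contradiction: since $f$ does not separate either component of $\Gamma\setminus\{e,e'\}$ (else $e\sim f$), one builds paths avoiding $\{f,f'\}$ between the endpoints of $e$ and $e'$, contradicting $f\sim f'$. This argument stays entirely at the level of the $2$-cut structure and avoids the combinatorial bookkeeping your region approach requires.
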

\begin{proof}Let $\gamma$ be a cycle whose edges are all part of $F_\Gamma$. Then if $e\subset \gamma$ and $e\sim f$, we know that $f\subset \gamma$, since the cycle $\gamma$ must travel between the two components of $\G \setminus \{ e, f \}$. Since $\G$ has no separating vertices, no two adjacent edges in the cycle $\gamma$ may lie in the same subgraph $\overl{[e_i]}$. This, together with the fact that $\Gamma$ has no separating edges, means that $\gamma$ must traverse at least four edges. We may thus assume that in the cyclic order on the edges of $\gamma$, there is a subsequence of edges either of the form $e f e' f'$ or $e e' f f'$, where $[e] = [e'] \neq [f] = [f']$.

Suppose that $\gamma$ contains the subsequence $e f e' f'$. Consider the components $C_1$ and $C_2$ of $F_\G \setminus \{e, e' \}$. Suppose the edge $f$ lies in $C_1$. If $C_1$ was separated by $f$, this would force $e$ and $f$ to pairwise separate $F_\G$. Thus, there is a path $p_1$ in $C_1$ (and, similarly, $p_2$ in $C_2$) joining the endpoints of $e$ and $e'$. 

Now, the assumed cyclic ordering on $\gamma$ forces $e$ and $e'$ to belong to different components of $\G \setminus \{ f, f' \}$. However, the paths $p_1$ and $p_2$ allow us to connect these components, without passing through $f$ or $f'$. This contradicts that $f$ and $f'$ form a separating edge pair, hence this cyclic ordering on $\gamma$ cannot occur.

Finally, suppose that $\gamma$ contains the subsequence $e e' f f'$. We will such that such a $\gamma$ must contain some other subsequence of the form considered in the previous paragraph, and hence the proof will be complete. Suppose not, for the sake of contradiction. Let $I_1 , \dots , I_\ell$ denote the arcs obtained by removing the edges of $[e]$ from $\gamma$. Then, for each $I_j$, the intersection of $I_j$ with each subgraph $\overl{[e_i]}$ is either edgeless or all of $\overl{[e_i]}$. Continuing to excise separating class in this fashion, we eventually exhaust either the edges of $\gamma$ or the classes of separating edges, and a contradiction is forced.
\end{proof}

Knowing now that the graph $F_\G$ is a forest, we proceed to show that blow-ups and blow-downs of such forests are the only marked graphs near $(\G, \rho) \in \calT_n$ that lie in its fiber under $\Phi$. 

Let $[e]=\{e=e_1,\ldots,e_r\}$ be some class of pairwise separating edges in $\Gamma$.  Then $[e]$ defines an $(r-1)$-simplex in the connected component of the fiber containing any marking $(\Gamma,\rho)$ as follows.  Note that any embedded cycle that meets some edge in $[e]$ meets all the others (otherwise some pair of edges would not separate). Moreover, any two such embedded cycles either traverse all the edges of $[e]$ in the same direction as each other, or all in the opposite direction. It follows that if $\sum_{i=1}^r l(e_i)=L$, any variation of the lengths of the edges of $[e]$ that does not change the total length $L$ will not change the image under $\Phi$. The next proposition says that in a graph without separating edges, this is the only edge expansion which can be done within the fibers of $\Phi$. 

%\begin{figure*}[h!]
%\centering
%\begin{subfigure}[t]{0.3\textwidth}
%\begin{overpic}[width=2in]{SameOrientation.pdf}
%\put(52,19){$\gamma$}
%\put(42,73){$b$}
%\end{overpic}
%\caption{Claim \ref{Orient}: $\alpha$ and $\beta$ cross the edge $\gamma$ with opposite orientations.}
%\label{SameOrientation}
%\end{subfigure}
%\quad
%\begin{subfigure}[t]{0.3\textwidth}
%\begin{overpic}[width=2in]{NoPermute.pdf}
%\end{overpic}
%\caption{Claim \ref{Permute}: $\alpha$ and $\beta$ cross the intervals $j_{m_1}$ and $j_{m_2}$ but in opposite order.}
%\label{NoPermute}
%\end{subfigure}
%\quad
%\begin{subfigure}[t]{0.3\textwidth}
%\begin{overpic}[width=2in]{Bridge.pdf}
%\end{overpic}
%\caption{Claim \ref{Intersect}: $\gamma$ connects $\alpha$ to $\beta$ in the complement of $b\cup e$. }
%\label{Bridge}
%\end{subfigure}
%\caption{Replacing of $\alpha,\beta$ with cycles that only intersect in the subtree $F$.}
%\end{figure*}
\begin{proposition} \label{SepPair}Let $(\Gamma,\rho)$ be a homology marked graph, and let $(\Gamma',\rho')$ result from blowing up a forest $F$ in $\Gamma$.  Then  $\Phi(\Gamma,\rho) = \Phi(\Gamma',\rho')$ if and only if $F$ is a union of separating edge pairs in $\G '$.  
\end{proposition}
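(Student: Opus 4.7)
Proof proposal:

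The plan is to translate the question into a matrix identity and then analyze its combinatorial implications. Fix a homology basis $\alpha_1,\dots,\alpha_n \in H_1(\Gamma')$ determined by $\rho'$; since $F$ is a forest, these descend under the collapse $\pi : \Gamma' \to \Gamma$ to a basis $\beta_1,\dots,\beta_n \in H_1(\Gamma)$ determined by $\rho$. For each edge $e$ of $\Gamma'$, write $\alpha^e \in \mathbb{Z}^n$ for the vector of cycle-coefficients $(\alpha_1^e,\dots,\alpha_n^e)$. Splitting the inner-product matrix of $\Gamma'$ according to whether edges lie in $E_0 := E(\Gamma)$ or in $F$, and using that the volume-one normalization of $\Gamma$ rescales $E_0$-lengths by $1/(1-L_F)$ where $L_F = \sum_{f\in F} l_{\Gamma'}(f)$, one obtains
\[ M' \;=\; (1-L_F)\, M \;+\; M'_F, \qquad M'_F \;:=\; \sum_{f \in F} l_{\Gamma'}(f)\, \alpha^f (\alpha^f)^T. \]
Since $\Phi$ is the class modulo positive scaling, $\Phi(\Gamma,\rho)=\Phi(\Gamma',\rho')$ is equivalent to $M' = \mu M$ for some $\mu > 0$, which rearranges to the rank-compatibility condition $M'_F = \kappa\, M$ for some $\kappa \geq 0$.

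For the $(\Leftarrow)$ direction, I would exploit the observation established in the paragraph before the proposition: if $f$ and $f'$ lie in the same separating-edge-pair class then every cycle traverses them with coordinated orientations, so $\alpha^f = \pm \alpha^{f'}$. Under the hypothesis, $F$ partitions into pair classes $[e_1],\dots,[e_k]$, and the rank-one summands in $M'_F$ group by class, each class contributing $L_{[e_j]}\,\alpha (\alpha)^T$ for a representative $\alpha$. Combining with the compatible pair-class structure carried over to $\Gamma$, one then verifies $M'_F = \kappa M$ directly by a coordinate expansion, so $\Phi(\Gamma,\rho) = \Phi(\Gamma',\rho')$.

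For the $(\Rightarrow)$ direction, I would run a cycle-based contradiction argument in the spirit of Proposition~\ref{splittingblow-up}. Assume some $f \in F$ lies in no separating edge pair of $\Gamma'$; equivalently, for every edge $e \neq f$ of $\Gamma'$, the set $\{f,e\}$ does not separate, so there exist cycles through $f$ whose other-edge content is not forced to be shared with any single partner. Build two explicit cycles $\gamma_1, \gamma_2$ in $\Gamma'$ passing through $f$ in an uncoordinated way (for example, entering and exiting $f$ through distinct complementary paths in $\Gamma' \setminus \{f\}$), and compute both $\langle \gamma_1,\gamma_2\rangle_{\Gamma'}$ and $\langle \pi_*\gamma_1,\pi_*\gamma_2\rangle_\Gamma$. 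The $f$-contribution $\pm l_{\Gamma'}(f)$ appears in the former but cannot be cancelled in the latter, which contradicts the identity $M'_F = \kappa M$ after matching entries. Iterating, every edge of $F$ must lie in some separating pair, and combining with the pair-class partition already built, $F$ itself is a union of separating edge pairs of $\Gamma'$.

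The main obstacle is the $(\Rightarrow)$ direction: the cycles $\gamma_1,\gamma_2$ must be chosen carefully enough that their support avoids interference from other tree-edges of $F$ (so that the $f$-term is genuinely isolated), which in turn requires a detailed understanding of how cycles in $\Gamma'$ decompose along $F$. The forest structure of $F$ (its components are trees, each blown up at a vertex of $\Gamma$) is the key tool here: one can restrict attention to a single tree $T \subset F$ at a vertex $v$, parameterize cycles by how they cross $T$, and reduce to the single-edge blow-up case, where the absence of a separating partner for $f$ directly yields the incompatible cycles needed for the contradiction.
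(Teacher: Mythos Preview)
Your matrix reformulation $M' = (1-L_F)M + M'_F$ with $M'_F = \sum_{f\in F} l(f)\,\alpha^f(\alpha^f)^T$ is correct, and rephrasing the question as whether $M'_F = \kappa M$ is a reasonable starting point. However, both directions of your argument have genuine gaps.

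For $(\Leftarrow)$, the assertion that the grouped rank-one terms yield $M'_F = \kappa M$ is false in general. Take for instance $\Gamma'$ consisting of two theta graphs joined by two edges $c,d$ (so that $[c]=\{c,d\}$ is a separating pair class disjoint from all other edges), and let $F=\{c\}$. Then $M'_F$ is a rank-one matrix while $M$ has full rank, so no proportionality can hold. The correct mechanism is not that $M'_F$ is proportional to $M$, but that the length $l(f)$ can be \emph{transferred} to a class-mate $e\in E_0$ with $\alpha^e=\pm\alpha^f$, leaving the unnormalized matrix literally unchanged; this is what the paragraph preceding the proposition establishes.

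For $(\Rightarrow)$, you correctly identify that one wants two cycles through the bad edge $b$ whose pairing behaves differently in $\Gamma'$ versus $\Gamma$. But your version --- ``the $f$-contribution appears in the former but cannot be cancelled in the latter'' --- does not yield a contradiction, since other entries of $M'$ and $M$ also differ (by the rescaling) and proportionality could in principle absorb a single extra term. The paper's sharper observation is: if one can arrange $\alpha\cap\beta \subset F$ \emph{entirely}, then $\langle\alpha,\beta\rangle_{\Gamma'}\neq 0$ while $\langle\pi_*\alpha,\pi_*\beta\rangle_\Gamma = 0$, which immediately rules out proportionality. Achieving $\alpha\cap\beta\subset F$ is the real work, and the paper does it via three successive modifications of an arbitrary pair of cycles through $b$: first make them traverse all shared intervals with the same orientation, then make the shared intervals appear in the same cyclic order on both cycles, and finally reroute around any shared edge lying outside $F$ using the connectivity of $\Gamma'\setminus\{b,e\}$ (which holds precisely because $b$ is in no separating pair). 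Your proposed reduction to the single-edge blow-up case does not address this core difficulty --- the overlap of $\gamma_1,\gamma_2$ outside $F$ --- and it is unclear how restricting to one tree $T\subset F$ would control intersections that occur far from $T$.
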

\begin{proof}The reverse implication follows from the discussion in the paragraph preceding the statement of the proposition. The main part of the proof involves establishing the forward implication.

Consider some edge $b$ contained in a tree $T \subset F \subset \G '$. We assume that $b$ is not part of a separating pair of edges, in order to derive a contradiction. Our goal is to find (part of) a homology marking for $\G'$ that evaluates differently under $\Phi$ than the marking of $\Gamma$ that collapsing $F$ induces. We make the following observation:

$(*)$ If $\alpha$ and $\beta$ are cycles in $\Gamma'$ such that $\alpha\cap \beta\subset F$, then $\Phi(\Gamma,\rho)\neq \Phi(\Gamma',\rho')$.

For then in $\Gamma'$, the intersection pairing gives $\langle\alpha,\beta\rangle_{\Gamma'}\neq0$, while the corresponding cycles in $\Gamma$ have trivial inner product.  

To apply this observation, select two distinct embedded cycles $\alpha$ and $\beta$ that traverse the edge $b$ in the same direction. We will repeatedly modify $\alpha$ and $\beta$ until we obtain two embedded cycles containing $b$ and whose intersection lies entirely in the forest $F$. The fact that $b$ is not separating means that there exist cycles containing $b$, and the assumption that $b$ is not part of a separating edge pair implies that there are at least two distinct such cycles. 

\begin{claim} \label{Orient}We can choose $\alpha$ and $\beta$ so they both traverse the edges of $\alpha\cap\beta$ in the same direction.

\end{claim} 
Indeed, $\alpha$ and $\beta$ overlap in some number of intervals.  Write $\alpha=\alpha_1\delta\alpha_2$ and $\beta=\beta_1\overl{\delta}\beta_2$, for some paths $\delta, \alpha_i, \beta_i$ in $\G'$ ($i=1,2$), where $\alpha_1$ and $\beta_1$ both begin with $b$. Then $\alpha$ and $\beta$ cross $\delta$ with opposite orientation.  Note that $\delta$ cannot travel over $b$ by assumption.  Replace $\alpha$ by $\alpha'=\alpha_1\beta_2$ and $\beta$ by $\beta'=\beta_1\alpha_2$. Then $\alpha'$ and $\beta'$ both contain $b$ and have one fewer interval where they overlap with opposite orientation.  

After applying the above claim, we will have two distinct cycles that may intersect in some intervals, but traveling along them in the same direction. Now, think of $\alpha$ as an interval $I_\alpha=[0,l(\alpha)]$ with endpoints identified and $\beta$ as an interval $I_\beta=[0,l(\beta)].$ Label the intervals of intersection in $I_\beta$ from 1 to $n$, increasing in the positive direction of $[0,l(\beta)]$, and label the corresponding intervals $I_\alpha$ similarly. (We may choose any vertex on $\alpha$ and $\beta$ as the `0' of the intervals $I_\alpha$ and $I_\beta$). Thus the ordering of the intervals in $I_\alpha$ defines a permutation $\sigma$ of $\{1,\ldots, n\}$.

\begin{claim} \label{Permute} We can choose $\alpha$ and $\beta$ so that $\sigma$ is the identity permutation.
\end{claim}
To prove this claim, suppose that for two integers $m_1$ and $m_2$ with $m_1<m_2$ we have that interval $m_2$ occurs before $m_1$ on $I_\alpha$.  We can write $\alpha=\alpha_1j_{m_2}uj_{m_1}\alpha_2$ and $\beta=\beta_1j_{m_1}vj_{m_2}\beta_2$, for paths $u, v, j_{m_i}, \alpha_i, \beta_i$ in $\G '$ ($i =1,2$).  Then replace $\alpha$ by $\alpha'=\alpha_1j_{m_2}\beta_2$ and $\beta$ by $\beta'=\beta_1j_{m_1}\alpha_2$.  This avoids all the intervals in between $j_{m_1}$ and $j_{m_2}$ inclusive. Repeatedly applying this process, we remove any intervals of intersection that appear `out of order' in $\alpha$.

At this point, $\alpha$ and $\beta$ may overlap in some number of intervals, but in the same cyclic order and with the same orientation.  The last step is to ensure that the intersections all lie in the forest $F$. 

\begin{claim}\label{Intersect}It is possible to replace $\alpha$ and $\beta$ by distinct cycles whose only intersection lies in $F$.  
\end{claim}
To prove the claim, we induct on the number of edges in the intersection of $\alpha$ and $\beta$ not lying in $F$.  Choose some edge $e\in \alpha\cap\beta$. Observe that, given the result of Claim 2, there are exactly two components $C_1$ and $C_2$ of $\alpha\cup\beta\setminus\{b,e\}$, one of which is either a point or a cycle itself.  By assumption $\Gamma'\setminus\{b,e\}$ is connected.  Thus, we can find a path $\nu$ from $C_1$ to $C_2$. Depending on whether the endpoints of $\nu$ lie on $\alpha$ or $\beta$ (or a combination), there are several cases to consider, but they are all similar.  We will consider the case where $\gamma$ connects a point in an arc belonging to $\alpha$ to a point in an arc belonging to $\beta$.  Write $\alpha=\alpha_1\alpha_2e\alpha_3$ where the endpoint of $\alpha_1$ is the start of $\gamma$, and $\beta=\beta_1e\beta_2\beta_3$ where the initial point of $\beta_2$ is the endpoint of $\gamma$.  Then set $\alpha'=\alpha_1\gamma\beta_2$ and $\beta'=\beta_1e\alpha_3$.  We have successfully avoided the intersection at $e$, and the total number of intersections has decreased since $\gamma$ is disjoint from $\alpha\cup \beta$.  

From this final claim, we find cycles $\alpha$ and $\beta$ intersecting only in $F$; hence, by $(*)$ we conclude that $\Phi(\Gamma,\rho)\neq \Phi(\Gamma',\rho')$.
\end{proof}

%Let $[e]=\{e=e_1,\ldots,e_n\}$ be some equivalence class of separating edges in $\Gamma$.  Then $[e]$ defines an $(n-1)$-simplex in the connected component of the fiber containing any marking $(\Gamma,\rho)$ as follows.  Note that any cycle which meets some edge in $[e]$ meets all the others with the same orientation.  It follows that if $\sum_{i=1}^nl(e_i)=L$, any variation of the lengths of the $e_i$ which does not change the total length $L$ will not change the image under $\Phi$. 

We end this section by proving that the blow-ups described in Proposition~\ref{SepPair} result in a unique `maximal' blow-up. A graph is said to be \emph{maximally separating} if none of its blow-ups increase the number of edges that form separating edge pairs. 

%The next lemma says there is a unique maximal way to add in separating edges without changes the image of the period mapping.  

\begin{lemma}\label{uniqueblow-up} Suppose $(\Gamma,\rho)$ has no separating vertices.  Then there exists a maximally separating blow-up $(\Gamma',\rho')$ such that $\Phi(\Gamma,\rho)=\Phi(\Gamma',\rho')$. Moreover, $\Gamma'$ is unique up up to (combinatorial) graph isomorphism. 
\end{lemma}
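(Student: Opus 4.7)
My plan is to prove existence and uniqueness separately.

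\emph{Existence.} I iterate: starting from $\Gamma_0 = \Gamma$, if $\Gamma_k$ is not maximally separating, then by definition there is some blow-up of $\Gamma_k$ whose new edges (together with the old ones) strictly increase the count of edges lying in separating pairs. By Proposition~\ref{SepPair}, such a blow-up can be chosen within the fiber of $\Phi$ over $(\Gamma, \rho)$. Each step strictly increases the edge count; meanwhile Euler's formula combined with the valence-$\geq 3$ condition bounds the edge count of any rank-$n$ graph by $3(n-1)$. Hence the sequence terminates at a maximally separating $\Gamma' = \Gamma_K$ in the fiber, and we set $\rho' = \rho_K$.

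\emph{Uniqueness.} Let $(\Gamma_1, \rho_1)$ and $(\Gamma_2, \rho_2)$ be two maximally separating blow-ups of $(\Gamma, \rho)$ lying in the fiber of $\Phi$. To build an isomorphism $\Gamma_1 \cong \Gamma_2$, I first use the reverse direction of Proposition~\ref{SepPair} to collapse the blown-up forest in each, recovering $(\Gamma, \rho)$. So both $\Gamma_i$ are blow-ups of $\Gamma$ along forests of separating pairs, and maximality forces these forests to be as refined as possible. Within each equivalence class $[e]$ of pairwise separating edges of $\Gamma_i$, the associated subgraph $\overline{[e]}$ becomes a canonical tree determined by the bipartition of $\Gamma_i$ associated with $[e]$, together with the vertex-incidence pattern of those edges. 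Matching up these canonical pieces over the common core $\Gamma$ then produces the desired combinatorial isomorphism $\Gamma_1 \cong \Gamma_2$.

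\emph{Main obstacle.} The hardest part is pinning down the canonical form of $\overline{[e]}$ in a maximally separating graph. My plan is a local analysis at each vertex $v$ of $\Gamma$: if $v$ is incident to multiple edges of a single class $[e_j]$, or to edges of several classes, max separatingness forces $v$ to split along specific lines dictated by the local 2-cut structure, with the splits commuting in an appropriate sense so that the order of blow-ups does not matter (compare Proposition~\ref{maxsep} for separating vertices). Establishing this local canonical form, and then checking that it is compatible across $\Gamma$, should give the global isomorphism.
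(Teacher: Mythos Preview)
Your approach is essentially the paper's---iterate local blow-ups at vertices and argue that the order does not matter---but you have stopped at the point where the real content lies. The paper's proof supplies one concrete ingredient that turns your ``local 2-cut structure'' plan into an argument: a new separating-pair edge can be blown up at a vertex $v$ if and only if there exists an edge $e$ with $\Gamma\setminus\{e,v\}$ disconnected and each component containing at least two edges incident to $v$; in that case there is a \emph{unique} way to split $v$ into an edge $f$ (separating those two groups of incident edges), and this blow-up creates no new $(e,v)$ pairs of the same type. With this characterization the maximal blow-up is determined entirely by the finite set of such pairs already present in $\Gamma$, and the order-independence (which the paper also leaves as an exercise, in the spirit of Proposition~\ref{maxsep}) follows. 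Your proposal names this step as the obstacle but does not resolve it; writing down the edge--vertex pair criterion is what is missing.

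A smaller gap in your existence argument: when you invoke Proposition~\ref{SepPair} to stay in the fiber, you need the newly blown-up edge itself to belong to a separating pair in $\Gamma_{k+1}$. This is true, but not automatic from the definition of ``not maximally separating.'' The point is that a single-edge blow-up cannot turn an old pair $\{e_1,e_2\}$ into a separating pair: if $f\notin\{e_1,e_2\}$ then $f$ lies entirely in one component of $\Gamma_{k+1}\setminus\{e_1,e_2\}$, so collapsing $f$ preserves the separation. Hence any increase in the separating-pair count must come from $f$ itself joining some class, and then the reverse direction of Proposition~\ref{SepPair} applies. You should say this.
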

\begin{proof} First, observe that an edge in a separating pair is the unique edge having its endpoints.  It follows that an edge of a separating pair may be blown up at a vertex $v$ if and only if there exists an edge $e$ such $\Gamma \setminus \{e, v \}$ is disconnected, and in each component of $\Gamma\setminus \{e,v\}$ there are at least 2 edges which meet $v$.  There is a unique way to insert an edge $f$ between these two components. Blowing up this edge does not create any new edge-vertex pairs of this form. It follows that the isomorphism type of a maximally separating blow-up of $\Gamma$ depends only the vertex-edge pairs $(e,v)$ in $\Gamma$ described above. It is left as a straightforward exercise to verify that the order in which these vertices are blown up to edges does not matter, and so there is a unique way to blow up the vertices in such pairs into pairwise separating edges. 
\end{proof}

%%%%%%%%%%%%%%%%%%%%%%%%%%%%%%%%%%%%%%%
%%%%%%%%%%%%%%%%%%%%%%%%%%%%%%%%%%%%%%%

\section{The Configuration Space of a Wedge of Metric Spaces}\label{Wedge}
In this section, we develop machinery that lets us describe how the set of separating vertices in a marked graph $(\G, \rho)$ can vary, without leaving the fiber $\Phi^{-1}(\Phi (\G, \rho))$. It will be convenient to use a more general framework, and consider a wider class of metric spaces. We note that a basepointed version of this construction was developed previously by Griffin \cite{Gri13}, and similar spaces were considered by Collins \cite{Col89}.

Let $X_1,\ldots, X_m$ denote a collection of connected path-metric spaces, and let $X=\bigvee_{i=1}^nX_i$ be their wedge product. For our purposes, the $X_i$ will be simplicial complexes with their induced path metrics.  In this section we ask the following 
\begin{question}Up to isometries isotopic to the identity, what are all the different ways of wedging the $X_i$ together?
\end{question}

This question leads to a configuration space of all possible ways of wedging the $X_i$ together.  For instance, if $n=2$ this amounts to a choice of wedge point in $X_1$ and $X_2$, hence the configuration space is just $X_1\times X_2$.  

The question, however, specifies all wedges up to isometric isotopy.  To see the difference, again consider two spaces where $X_1=X_2=\Sa^1$ the circle of radius 1.  By parametrizing each circle, the configuration space of possible wedges is a torus $\Sa^1\times\Sa^1$.  But every such wedging is isometric to every other one, hence the space of wedges is a single point.  To obtain the space of wedges up to isometry, we could take the configuration space and identify points that ``look the same" under the action of the isometry group. Let $\Isom_0 (X_i)$ be the isometries of $X_i$ that are isotopic to the identity map. We obtain the space of wedges by taking the configuration space $X_1\times X_2=\Sa^1\times\Sa^1$ and taking the quotient by $\Isom_0(X_1)\times\Isom_0(X_2)=\SO_2(\R)\times \SO_2(\R)$. Since this group acts transitively, the quotient is a point, as desired.  This will be a motivating example for the rest of this section.

\subsection{Labelled trees and the configuration space of wedges}
Let $T=(V,E)$ be a labelled tree with $n$ vertices, with label set $\{1,\ldots,n\}$.   Given metric spaces $X_1,\ldots, X_m$, we form a space $\widetilde{T}(X_1,\ldots,X_m)$ as follows.  For each edge $e\in T$ with endpoints labelled $i,j$, there is a copy of $X_i\times X_j$ in $\widetilde{T}(X_1,\ldots,X_m)$.  In other words,\[\widetilde{T}(X_1,\ldots,X_m)=\prod_{(i,j)\in E}X_i\times X_j.\]

We call $\widetilde{T}(X_1,\ldots,X_m)$ the \emph{parametrized configuration space of wedges with pattern $T$}.  Obviously we have that after rearranging, $\widetilde{T}(X_1,\ldots,X_m)\cong X_1^{d_1}\times\cdots\times X_n^{d_n}$, where $d_i$ is the degree of the vertex labelled $i$ in $T$.  Therefore the group $\Isom_0(X_1)\times\cdots\times \Isom_0(X_m)$ acts diagonally on $\widetilde{T}(X_1,\ldots,X_m)$.  The quotient space $T(X_1,\ldots, X_m)$ by this action is what we call the \emph{configuration space of wedges} with pattern $T$.  

A well-known theorem of Cayley states that there are exactly $m^{m-2}$ labelled trees with $m$ vertices.  We will form the configuration space of wedges by gluing together the configuration spaces of all $m^{m-2}$ patterns.  Given labelled trees $T_1$ and $T_2$, we glue $\widetilde{T}_1 (X_1, \dots, X_m)$ and $\widetilde{T}_2 (X_1, \dots, X_m)$ together along factors $X_i \times X_j$ whenever both trees have an edge with vertices labelled $i$ and $j$. Since the action of $\Isom_0(X_1)\times\cdots\times \Isom_0(X_m)$ acts in the same way on each parametrized configuration space and preserves the subspaces where they agree, we can glue together the quotients $T_1(X_1,\ldots,X_m)$ and $T_2(X_1,\ldots,X_m)$.

We define the \emph{configuration space of wedges} to be $W(X_1,\ldots,X_m)=\coprod_{i=1}^{n^{n-2}}T_i(X_1,\ldots,X_m)/\sim$, where we identify to different patterns along the diagonal embeddings as above.
\subsection{Wedges of graphs}
In our situation, the $X_i$ will be the components of the splitting graph $\mathcal{C}(\Gamma)$ and, in particular, $\Isom_0(X_i)=1$ unless $X_i=S^1$ in which case $\Isom_0(X_i)\cong\Sa^1$. If $\mathcal{C}(\Gamma)=\Gamma_1\cup\cdots \cup \Gamma_k\cup S_1^1\cup\cdots\cup S^1_l$. If $T$ is a labelled $(k+l)$-vertex tree then we have \[T(\Gamma_1,\ldots,\Gamma_k,S_1^1,\ldots,S_l^1)=\frac{\Gamma_1^{d_1}\times\cdots\times \Gamma_k^{d_k}\times (S^1)^{d_{k+1}}}{(\Sa^1)^l},\] where $l\leq d_{k+1}$.  The main result of this section is the following.

\begin{proposition}\label{Config} Suppose that $\Gamma_i'$ is obtained from $\Gamma_i$ by a forest collapse, for each $1 \leq i \leq m$. Then \[W(\Gamma_1,\ldots,\Gamma_m)\simeq_{h.e.}W(\Gamma_1',\ldots,\Gamma_m'). \]
\end{proposition}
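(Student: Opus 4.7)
The plan is to reduce to collapsing a single edge of a forest inside one component (say $\Gamma_1$), and then extend the corresponding deformation retraction $\Gamma_1 \to \Gamma_1'$ coordinate-wise to a deformation retraction of $W(\Gamma_1,\ldots,\Gamma_m)$ onto $W(\Gamma_1',\Gamma_2,\ldots,\Gamma_m)$. Iterating over all edges of each forest and over the indices $i = 1, \ldots, m$ then yields the general statement.

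First, by composing one-edge collapses, it suffices to treat the case $\Gamma_i' = \Gamma_i$ for $i \geq 2$ and $\Gamma_1' = \Gamma_1/e$ for a single non-loop edge $e \subset \Gamma_1$. If $\Gamma_1 = S^1$, then up to subdivision $\Gamma_1 = \Gamma_1'$ and there is nothing to prove; otherwise $\Isom_0(\Gamma_1) = \Isom_0(\Gamma_1') = 1$. The quotient map $r : \Gamma_1 \to \Gamma_1'$ is a cofibration and deformation retract, realized by a homotopy $h_t : \Gamma_1 \to \Gamma_1$ with $h_0 = \mathrm{id}$, $h_1 = s \circ r$ for some section $s$ of $r$, and $h_t$ equal to the identity off a neighborhood of $e$. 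For each labelled tree $T$ on $m$ vertices, the reordering $\widetilde{T}(\Gamma_1,\ldots,\Gamma_m) \cong \Gamma_1^{d_1} \times \cdots \times \Gamma_m^{d_m}$ identifies the induced map to $\widetilde{T}(\Gamma_1',\Gamma_2,\ldots,\Gamma_m)$ with $r^{d_1} \times \mathrm{id}$, and the homotopy $H^T_t := h_t^{d_1} \times \mathrm{id}$ exhibits an explicit deformation retraction. The diagonal $\Isom_0$-action touches only $S^1$-factors (and $h_t$ leaves those alone), so each $H^T_t$ descends to the quotient $T(\Gamma_1,\ldots,\Gamma_m)$. Because the same $h_t$ is applied in every $\Gamma_1$-factor across every $\widetilde{T}$, the homotopies $H^T_t$ agree on every shared factor $X_i \times X_j$ used in the gluings defining $W$, and hence assemble to a globally defined deformation retraction, proving the desired homotopy equivalence.

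The main obstacle is checking this coherence across the gluings: for two labelled trees $T_1, T_2$ with a common edge between labels $i$ and $j$, the partial homotopies $H^{T_1}_t, H^{T_2}_t$ must literally agree on the copy of $X_i \times X_j$ along which $\widetilde{T}_1$ and $\widetilde{T}_2$ are identified. The identification is prescribed by the labelling, and applying the same $h_t$ to each $\Gamma_1$-coordinate and the identity elsewhere makes this a direct, if notationally heavy, verification. A cleaner alternative is to view $W(X_1,\ldots,X_m)$ as the colimit of the evident diagram whose objects are the parametrized configuration spaces $\widetilde{T}$ and whose morphisms are the inclusions of shared subproducts; these inclusions are cofibrations, so the colimit is a homotopy colimit, and the standard gluing lemma then yields that any levelwise homotopy equivalence of such diagrams induces a homotopy equivalence on colimits.
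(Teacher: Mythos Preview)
Your proof is correct and follows essentially the same approach as the paper: both collapse componentwise on each $\Gamma_1$-factor, note that $\Isom_0(\Gamma_1)=\Isom_0(\Gamma_1')=1$ so the quotient is harmless on that factor, and then argue that the resulting homotopy equivalences are compatible with the gluing maps between patterns. You are more explicit than the paper about the coherence across gluings (and your homotopy-colimit reformulation is a clean way to package that step), but the underlying argument is the same.
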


\begin{proof}Since the quotient of a torus by a subtorus is again a torus, the computation above shows that for each pattern $T$, we have that $T(\Gamma_1,\ldots,\Gamma_m)$ is a product of graphs and a torus of some dimension. 
%Hence $T(\Gamma_1,\ldots,\Gamma_m)$ is a locally CAT(0) metric space. The fact that $W(\Gamma_1,\ldots,\Gamma_m)$ supports a locally CAT(0) metric as well follows from Lemma~\ref{gluCAT}, since any two configurations of patterns are identified along locally convex subspaces.  

Now, note that if $\Gamma_1'$ is obtained from $\Gamma_1$ by a forest collapse, then neither one is a circle, hence $\Isom_0(\Gamma_1')=\Isom_0(\Gamma_1)=1$.  Then for any pattern $T$ we have 
\begin{align*}T(\Gamma_1,\ldots,\Gamma_m)&=\frac{\Gamma_1^{d_1}\times\cdots\times \G_m^{d_m}}{\Isom_0(\G_1)\times\cdots\times \Isom_0(\G_m)}\\
&= \G_1^{d_1}\times\left(\frac{\Gamma_1^{d_2}\times\cdots\times \G_m^{d_m}}{\Isom_0(\G_2)\times\cdots\times \Isom_0(\G_m)}\right)\\
&\simeq_{h.e.}\G_1'^{d_1}\times\left(\frac{\Gamma_1^{d_2}\times\cdots\times \G_m^{d_m}}{\Isom_0(\G_2)\times\cdots\times \Isom_0(\G_m)}\right)\\
&\simeq \dots \simeq T(\Gamma_1',\ldots,\Gamma_m'),
\end{align*}
where the homotopy equivalence is obtained by collapsing componentwise. Since this process commutes with the gluing maps between different patterns, we obtain a homotopy equivalence $W(\Gamma_1,\ldots,\Gamma_m)\simeq W(\Gamma_1',\ldots,\Gamma_m')$ as desired. 
\end{proof}

\subsection{Asphericity of the configuration space of wedges}

To close this section, we generalize work of Collins \cite{Col89}, showing that the configuration space $W(\Gamma_1,\ldots,\Gamma_m)$ is aspherical and identifying its fundamental group.

\textbf{Colored symmetric graphs.} Writing $\mathcal{C}(\Gamma)=\coprod_{i=1}^k \Gamma_i$, the configuration space of wedges associated to $\Gamma$ is $W=W(\Gamma_1,\ldots, \Gamma_k)$.  First, we replace $W$ with a space to which it is homotopy equivalent.  Choose a maximal tree $T_i$ in $\Gamma_i$.  By Proposition~\ref{Config}, we know that $W$ is homotopy equivalent to $W'=W(G_1/T_1,\ldots, G_k/T_k)$. The space $W'$ may be described as follows.  Each $G_i/T_i=R_{\ell_i}$ is a rose of rank $\ell_i$ so that $\sum_{i=1}^k \ell_i=n$, and we think of the rose $R_n$ as $R_{\ell_1}\vee\cdots\vee R_{\ell_k}$, where each rose $R_{l_i}$ has a different color. Call this decomposition a \emph{colored wedge of roses}. In $W'$, we only allow blow-ups in which each rose of a given color stays wedged at the same point. We call such a blow-up a \emph{colored symmetric graph}; see Figure~\ref{coloredrose} for an example. Roses of different colors may move along edges of a different color at their wedge point, but loops of the same color must always stay bunched. Note that if $\G=R_n$, then each loop has a different color, and in this case we just call the corresponding blow-ups \emph{symmetric}.
\begin{figure}[h]
\centering
\includegraphics[width=2.5in]{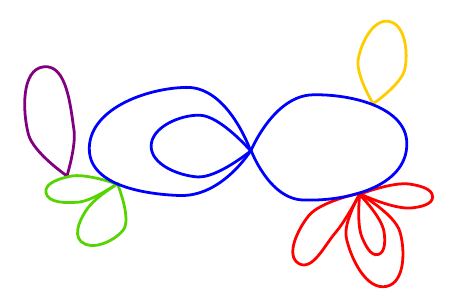}
\caption{A colored symmetric graph.}
\label{coloredrose}
\end{figure}

\textbf{Pure colored symmetric automorphisms.} Any colored wedge of roses gives us a free splitting $F_n=F_{\ell_1}*\cdots*F_{\ell_k}$, where $\sum_{i=1}^k \ell_i=n$.  The \emph{pure colored symmetric automorphism group} associated to this free splitting, denoted $\mathrm{P}\Sigma(\ell_1,\ldots, \ell_k)$, is the subgroup in $\Out(F_n)$ generated by all the partial conjugations of the form \[F_{\ell_i}\mapsto xF_{\ell_i}x^{-1}, \text{ where } x\in F_{\ell_j}, 1\leq j \neq i \leq k\] and no other free factor is conjugated by $x$. The notation `$\mathrm{P}\Sigma$' here stands for `pure symmetric',  and comes from the case when each $\ell_i=1$. In this instance, the pure colored symmetric automorphism group equals $\mathrm{P}\Sigma_n$, the \emph{pure symmetric automorphism group}. The group $\mathrm{P} \Sigma_n$ can also be thought of as the subgroup of $\Out(F_n)$ consisting of automorphisms that preserve the conjugacy class of each generator in a chosen basis. 

Collins studied $\mathrm{P}\Sigma_n$ in \cite{Col89}, and showed that the moduli space of symmetric graphs is aspherical with fundamental group $\mathrm{P}\Sigma_n$.  In our language, Collins proved that any connected component of a fiber of $\Phi$ containing a rose is aspherical and $\pi_1$-injective, with fundamental group $\mathrm{P}\Sigma_n$. We will extend Collins' result to all colored symmetric graphs:

\begin{theorem} \label{ColoredCollins}For any metric on $R_{\ell_i}$, the configuration space $W(R_{\ell_1},\ldots,R_{\ell_k})$ is aspherical with fundamental group $\mathrm{P}\Sigma(\ell_1,\ldots, \ell_k)$. \end{theorem}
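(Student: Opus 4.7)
The plan is to extend Collins' argument from the case $\ell_i = 1$ by passing to the universal cover of $W := W(R_{\ell_1}, \ldots, R_{\ell_k})$, showing it is contractible, and identifying its deck group with $\mathrm{P}\Sigma(\ell_1, \ldots, \ell_k)$. To build the universal cover $\widetilde{W}$, I would fix an identification $F_n = F_{\ell_1} * \cdots * F_{\ell_k}$ and realize $\widetilde{W}$ as the subspace of $CV_n$ consisting of marked colored symmetric graphs whose marking respects the colored wedge decomposition of a chosen base rose. The covering map $\widetilde{W} \to W$ is the forgetful map that drops the marking, so the deck group consists of changes of marking preserving the colored symmetric structure.

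Next, I would identify the deck group with $\mathrm{P}\Sigma(\ell_1, \ldots, \ell_k)$. Each generator of the latter, namely a partial conjugation $F_{\ell_i} \mapsto x F_{\ell_i} x^{-1}$ with $x$ a basis element of $F_{\ell_j}$, can be realized as the monodromy of a loop in $W$ that drags the wedge point attaching $R_{\ell_i}$ once around the loop representing $x$ in $R_{\ell_j}$ (using the torus factors appearing in the wedge description). Conversely, a loop in $W$ can be put in general position with respect to the blow-up stratification and then decomposed into a concatenation of such elementary wedge-point slides, so the monodromy representation surjects onto $\mathrm{P}\Sigma(\ell_1, \ldots, \ell_k)$ with trivial kernel.

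The main step, and the main obstacle, is to prove that $\widetilde{W}$ is contractible. Following McCullough--Miller in the uncolored case, I would equip a spine of $\widetilde{W}$ with the structure of a cube complex whose cubes encode compatible families of free-factor blow-ups at a single base point, and verify the CAT(0) link condition: that the link of each vertex is a flag complex. In Collins' case the flag check reduces to a combinatorial statement about independent moves of individual generators; in the colored case, generators of a common color must move as a block, and one must verify that pairwise compatibility of blow-ups of free factors implies joint compatibility. This should follow by induction on the number of colors, together with an analysis of how the tree patterns parametrizing colored wedges interact, but it is the step that requires the most care, since it is the point where the coloring constraint actively interferes with the free-choice arguments of the $\ell_i = 1$ setting.
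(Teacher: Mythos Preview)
Your approach differs substantially from the paper's, and the obstacle you flag at the end is precisely where your plan remains incomplete.

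The paper does not attempt a CAT(0) cube-complex argument. Instead, it proves contractibility of the cover by sandwiching it between larger contractible subspaces of the spine $K_n$ of outer space and then deformation-retracting back down via Quillen's poset lemma. Concretely: first, one chooses a finite set $\mathcal{W}$ of conjugacy classes (the generators of each $F_{\ell_i}$ together with all products of pairs of distinct generators within the same factor) so that the roses minimizing the $\mathcal{W}$-norm are exactly those marked by elements of $\mathrm{P}\Sigma(\ell_1,\ldots,\ell_k)$; the Culler--Vogtmann theorem on $\mathcal{W}$-minimal posets then gives contractibility of the union $K_{\min(\mathcal{W})}$ of their stars for free. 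Second, one invokes Collins' own result that every marked graph $X$ in this star-union contains a canonical forest $E_{-\Sigma}(X)$ whose collapse makes $X$ symmetric; since this collapse is a poset map with $h_1(X)\le X$, Quillen's lemma yields a deformation retraction onto the sub-poset of symmetric graphs. Third --- and this is the genuinely new ingredient --- the paper defines a colored refinement: a forest $E_{-\mathcal{W}}(X)$ of ``$\mathcal{W}$-incoherent'' edges (those that, together with their companion edge, separate two cycles of the same color), proves this forest is independent of the choice of $\mathcal{W}$-compatible maximal tree, and applies Quillen again to retract onto the space $\mathrm{C}\Sigma(R_{\ell_1},\ldots,R_{\ell_k})$ of marked colored symmetric graphs. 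The quotient by the free $\mathrm{P}\Sigma(\ell_1,\ldots,\ell_k)$-action is then identified with $W$.

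Your CAT(0) route might in principle succeed, but the link-condition verification you single out is not merely a technicality: neither Collins' original space nor McCullough--Miller's is typically proved contractible via a CAT(0) cube structure, and you would first have to exhibit such a structure on the colored spine before even attempting the flag check. The paper's approach sidesteps this entirely --- contractibility is inherited from Culler--Vogtmann's theorem, and the remaining work is two well-definedness checks for forest collapses plus two invocations of Quillen's lemma, none of which require any local curvature condition.
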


Our proof of Theorem \ref{ColoredCollins} will closely follow Collins' proof, and has three major steps.  Recal that $K_n$ denotes the spine of outer space $CV_n$.  First, we show that the union $K_{\ell_1,\ldots, \ell_k}$ of stars of roses that are marked by elements of $\mathrm{P}\Sigma(\ell_1,\ldots, \ell_k)$ is a connected,  contractible subspace of $K_n$. Second, we use a proposition of Collins to show that $K_{\ell_1,\ldots, \ell_k}$ deformation retracts onto the subspace $(K_{\ell_1,\ldots, \ell_k})^\Sigma$ consisting of all symmetric graphs contained in $K_{\ell_1,\ldots, \ell_k}$.  In the final step, we show that $(K_{\ell_1,\ldots, \ell_k})^\Sigma$ deformation retracts onto the subspace of marked, colored symmetric graphs, denoted $\mathrm{C}\Sigma(R_{\ell_1},\ldots, R_{\ell_k})$.   

\textbf{Minimal posets.}
Let $\mathcal{W}$ be a set of conjugacy classes of elements of $F_n$.  Then $\mathcal{W}$ defines a norm on marked roses $(R_n,\rho)\in K_n$ via $\|(R_n,\rho)\|_{\mathcal{W}}=\sum_{w\in \mathcal{W}}|\rho(w)|$, where $|\rho(w)|$ is the minimal length of the path $\rho(w)$.  Given $\mathcal{W}$, define the \emph{$\mathcal{W}$-minimal poset} \[K_{\min(\mathcal{W})}=\bigcup_{\text{min } \| \cdot \|_\mathcal{W}} \st(R_n,\rho), \] where $\st(R_n,\rho)$ denotes the simplicial star of $(R_n,\rho)$ in $K_n$ and we take the union over all roses which minimize the $\mathcal{W}$-norm. In their original paper \cite{CV86} on outer space, Culler--Vogtmann prove
\begin{theorem} \label{ContractibleMinSet}For any $\mathcal{W}$, the $\mathcal{W}$-minimal poset $K_{\min(\mathcal{W})}$ is contractible. 
\end{theorem}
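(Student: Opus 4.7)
The plan is to show that $K_{\min(\mathcal{W})}$ is a deformation retract of the spine $K_n$; since $K_n$ is contractible by Culler--Vogtmann's foundational result, this would immediately yield the theorem.

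First I would extend the $\mathcal{W}$-norm from marked roses to all of $K_n$ by setting $\|(\Gamma,\rho)\|_\mathcal{W} := \sum_{w \in \mathcal{W}} \ell_\Gamma(\rho_*(w))$, where $\ell_\Gamma(\rho_*(w))$ is the cyclically reduced edge-path length of the loop $\rho_*(w)$ in the underlying combinatorial graph $\Gamma$. A forest collapse $\Gamma \twoheadrightarrow \Gamma/F$ can only shorten cyclically reduced representatives, so the extended norm is monotone non-increasing under the poset order on $K_n$. In particular, within any simplex the norm-minimum is attained at the rose at the bottom, so $K_{\min(\mathcal{W})}$ is exactly the subcomplex on which $\|\cdot\|_\mathcal{W}$ attains its global minimum.

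Next I would invoke a Whitehead-style folding argument: if a marked rose $(R_n,\rho)$ does not minimize $\|\cdot\|_\mathcal{W}$, then there is an ``ideal edge'' in the sense of Whitehead's classical algorithm whose corresponding Whitehead move produces a rose of strictly smaller norm. Each such Whitehead move is realized inside $\st(R_n,\rho) \subset K_n$ by a length-two edge path that first blows up the ideal edge (moving into a $1$-simplex above the rose) and then collapses a different edge of the resulting $\theta$-graph down to the new rose. Thus every non-minimal rose is zigzag-connected through its own star to a rose of strictly smaller norm, giving a local direction of descent at every point of $K_n \setminus K_{\min(\mathcal{W})}$.

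To assemble these local descent moves into a global deformation retract, I would peel off ``bad'' simplices of $K_n$ in order of decreasing maximum norm, collapsing each onto a canonical free face determined by the Whitehead-descent direction. The main obstacle I anticipate is handling ties in the norm: multiple ideal edges can yield the same norm decrease, and adjacent top-dimensional simplices can share boundary, so the ``canonical'' descent direction must be made unambiguous (for instance via a secondary lexicographic ordering on Whitehead moves at each rose) to ensure the local collapses glue into a single continuous deformation. As a backup approach, I would try the Nerve Theorem: verify that any nonempty finite intersection of stars $\st(R_n,\rho)$ taken over norm-minimizing roses is contractible (again reducing to a Whitehead argument, now restricted to the minimizing stratum), and then conclude that $K_{\min(\mathcal{W})}$ is homotopy equivalent to a contractible nerve.
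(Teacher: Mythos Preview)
The paper does not prove this statement; it is quoted as a theorem of Culler--Vogtmann from \cite{CV86}, so there is no in-paper proof to compare against. What you are sketching is, in outline, a reconstruction of their original argument: extend the norm, use Whitehead-style peak reduction to find descent directions, and assemble these into a global homotopy equivalence between $K_n$ and $K_{\min(\mathcal{W})}$.

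There are, however, two genuine gaps in your proposal. First, your identification of $K_{\min(\mathcal{W})}$ with the sublevel set where the extended norm attains its global minimum is incorrect: by definition $K_{\min(\mathcal{W})}$ is the union of \emph{stars} of norm-minimal roses, and a blow-up of a minimal rose has strictly larger combinatorial norm than the rose itself. So the target of your retraction is not a level set, and any argument that simply flows down the norm will overshoot, landing on the discrete set of minimal roses rather than on the union of their stars. Second, and more seriously, the step you flag as ``the main obstacle'' is exactly where the content of Culler--Vogtmann's proof lies. A secondary lexicographic order on Whitehead moves does not resolve the coherence problem: one must analyze the poset of ideal edges at each rose and show that the intersection of the star of a non-minimal rose with the union of all lower-norm stars is contractible (this is their Lemma on the contractibility of ideal-edge posets, proved via Higman's peak-reduction lemma generalized to finite sets of conjugacy classes). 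Neither your peel-off sketch nor the Nerve-Theorem backup engages with this; in particular, contractibility of finite intersections of stars of minimal roses is not obvious and is itself a special case of the same ideal-edge analysis. Without that ingredient, the argument does not close.
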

The minimal poset for a carefully chosen $\mathcal{W}$ will be the starting point of the proof of Theorem \ref{ColoredCollins}. We will also need the following homotopy theoretic result due to Quillen.

\begin{theorem}[Quillen \cite{qui73}]Let $P$ be a poset and let $f:P\rightarrow P$ be a poset map such that for all $x\in P$, we have $f(x)\leq x$. Then $f$ induces a deformation retraction of simplicial realizations $K(P)\rightarrow K(f(P))$.  
\end{theorem}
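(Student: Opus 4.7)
The plan is to reduce the statement to the standard \emph{poset homotopy lemma}: if $g, h \colon P \to Q$ are order-preserving maps of posets with $g(x) \leq h(x)$ for every $x \in P$, then the induced maps $|K(g)|$ and $|K(h)|$ are homotopic as continuous maps $|K(P)| \to |K(Q)|$. Once this auxiliary lemma is in hand, applying it to the pair $(g,h) = (f, \mathrm{id}_P)$ shows $|K(f)| \simeq \mathrm{id}_{|K(P)|}$; and applying it a second time to $f|_{f(P)} \colon f(P) \to f(P)$ (which still satisfies $f(y) \leq y$ for every $y \in f(P)$) against $\mathrm{id}_{f(P)}$ exhibits the inclusion $\iota \colon |K(f(P))| \hookrightarrow |K(P)|$ and the corestriction $r \colon |K(P)| \to |K(f(P))|$ of $f$ as homotopy inverses. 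The upgrade from a homotopy equivalence to a deformation retract onto the subcomplex $|K(f(P))|$ is then formal, since the inclusion of a subcomplex is a cofibration.

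To prove the lemma, I would form the product poset $P \times [1]$ with $[1] = \{0 < 1\}$ and define $H \colon P \times [1] \to Q$ by $H(x,0) = g(x)$ and $H(x,1) = h(x)$. Monotonicity on pairs $(x,0) \leq (y,0)$ or $(x,1) \leq (y,1)$ follows from monotonicity of $g$ and $h$; the only interesting case is the mixed comparability $(x,0) \leq (y,1)$, where $x \leq y$ in $P$ and we need $H(x,0) = g(x) \leq h(y) = H(y,1)$, which follows from $g(x) \leq h(x) \leq h(y)$ by the hypothesis and monotonicity of $h$. Under the natural identification $|K(P \times [1])| \cong |K(P)| \times [0,1]$, the realization of $H$ is a continuous map $|K(P)| \times [0,1] \to |K(Q)|$ restricting to $|K(g)|$ at time $0$ and to $|K(h)|$ at time $1$, i.e.\ the required homotopy.

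The only genuinely non-formal step is verifying that $H$ respects the product order on $P \times [1]$, which reduces to the single mixed comparability above---exactly the place where the pointwise hypothesis $f(x) \leq x$ is deployed, first for $f \leq \mathrm{id}_P$ on $P$ and then a second time for $f|_{f(P)} \leq \mathrm{id}_{f(P)}$ on $f(P)$. Everything else---the factorization $|K(f)| = \iota \circ r$, the product-poset realization, and the passage from a homotopy equivalence of a CW pair to a strong deformation retract---is formal simplicial topology, and no combinatorial analysis of particular simplices is required.
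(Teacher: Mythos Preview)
Your argument is correct and is essentially the standard proof of this result. The paper, however, does not prove this theorem at all: it is stated as a result due to Quillen with a citation, and is then used as a black box in the subsequent argument. So there is no proof in the paper to compare against; you have supplied one where the authors simply quote the literature.

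One small remark on presentation: the identification $|K(P \times [1])| \cong |K(P)| \times [0,1]$ is not entirely trivial (the order complex of a product of posets is not literally the product of order complexes as simplicial complexes), so in a fully self-contained write-up you would want to either invoke the fact that the nerve functor from posets to simplicial sets preserves products and geometric realization preserves finite products, or else give the explicit prism decomposition. But this is a well-known ingredient and your sketch is otherwise complete.
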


\textbf{Contracting onto colored roses.}
Fix a free splitting $F_n=F_{\ell_1}*\cdots*F_{\ell_k}$, and choose a basis $\{x_1,\ldots x_n\}$ for $F_n$ such that $\{x_{\ell_{(i)}+1},\ldots, x_{\ell_{(i+1)}}\}$ is a basis for $F_{\ell_{i+1}}$, where $\ell_0=0$. Let $\mathcal{W}_i$ for $1\leq i\leq k$ denote the set of conjugacy classes of every generator and every product of two distinct generators in $F_{\ell_i}$.  Set $\mathcal{W}=\cup_{i=1}^k\mathcal{W}_i$.  We have

\begin{proposition}\label{MinAut} A marked rose $(R_n,\phi)\in K_{\min(\mathcal{W})}$ if and only if $\phi\in \mathrm{P}\Sigma(\ell_1,\ldots, \ell_k)$ $($after possibly permuting/inverting basis elements$)$.   
\end{proposition}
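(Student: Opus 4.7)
The plan is to prove both directions of the biconditional using cyclic-word analysis in $F_n$, extending Collins' strategy from the singleton case ($\ell_i=1$) to the colored setting.

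For the easy direction, suppose $[\phi]\in \mathrm{P}\Sigma(\ell_1,\ldots,\ell_k)$ up to a signed permutation of the basis. Signed permutations map $\mathcal{W}$ to itself, so I may reduce to the case where $[\phi]$ itself is a product of partial conjugations $F_{\ell_i}\mapsto gF_{\ell_i}g^{-1}$. Each such partial conjugation preserves the conjugacy class of any word supported entirely inside one factor $F_{\ell_i}$, and every $w\in\mathcal{W}$ is so supported; hence $|\phi(w)|=|w|$ for all $w\in\mathcal{W}$, yielding $\|\phi\|_\mathcal{W}=\|\mathrm{id}\|_\mathcal{W}$, which I expect to be the minimum value.

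For the forward direction, suppose $\phi$ minimizes $\|\cdot\|_\mathcal{W}$. The first step is to combine the upper bound coming from $\mathrm{id}$ with the abelianization lower bound $|\phi(w)|\geq\|\phi_{ab}(w_{ab})\|_1$ (for $\phi_{ab}\in\mathrm{GL}_n(\mathbb{Z})$); minimizing the resulting linear quantity over $\mathrm{GL}_n(\mathbb{Z})$ forces $\phi_{ab}$ to be a signed permutation, and equality in the cyclic-length bound then gives $\phi(x_j)=g_j x_{\sigma(j)}^{\epsilon_j}g_j^{-1}$ for a permutation $\sigma$ (which will be shown to preserve block sizes), signs $\epsilon_j\in\{\pm 1\}$, and conjugators $g_j\in F_n$. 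The second step exploits the pair constraints: for $x_j,x_k$ in a common block $B_i$ and each $(\alpha,\beta)\in\{\pm 1\}^2$, the requirement $|\phi(x_j^\alpha x_k^\beta)|=2$ becomes, after conjugation by $g_j^{-1}$, the condition that $x_{\sigma(j)}^{\alpha\epsilon_j}hx_{\sigma(k)}^{\beta\epsilon_k}h^{-1}$ reduce cyclically to length two, where $h:=g_j^{-1}g_k$. A direct cyclic-word computation shows that this holds for all sign pairs simultaneously only when $h\in\langle x_{\sigma(j)}\rangle\cdot\langle x_{\sigma(k)}\rangle$. Propagating this pairwise condition across $B_i$ via the cocycle identity $h_{jk}h_{kl}=h_{jl}$, and absorbing each $g_j$ modulo the cyclic centralizer of $x_{\sigma(j)}$, produces a single $\tilde g_i\in F_n$ with $g_j=\tilde g_i$ for every $j\in B_i$. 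In the final step, after composing with the signed permutation of basis elements associated to $\sigma$ and $(\epsilon_j)$, $\phi$ conjugates each factor $F_{\ell_i}$ as a whole by $\tilde g_i$; choosing an inner-automorphism representative that normalizes some $\tilde g_{i_0}$ to be trivial, and writing each remaining $\tilde g_i$ in normal form with respect to the splitting $F_n=F_{\ell_1}*\cdots*F_{\ell_k}$, realizes $\phi$ as a product of the partial conjugations generating $\mathrm{P}\Sigma(\ell_1,\ldots,\ell_k)$.

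The main obstacle will be the second step: both the precise classification of valid $h$ making $x_{\sigma(j)}^\alpha hx_{\sigma(k)}^\beta h^{-1}$ cyclically of length two for every sign pair, and the consistent propagation of these pairwise constraints across an entire block via the cocycle identity. Collins handles the analogous argument only for singleton blocks, so the colored case requires carefully organizing the constraints coming from multiple generators sharing a block, and then using the normal-form decomposition in the third step to translate a block-wise conjugation into an explicit product of the defining partial conjugations of $\mathrm{P}\Sigma(\ell_1,\ldots,\ell_k)$.
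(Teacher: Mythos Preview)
Your approach is essentially the same as the paper's, but there is one over-assumption worth flagging. The set $\mathcal{W}$ as defined contains only the conjugacy classes of the generators $x_j$ and of the \emph{positive} products $x_jx_k$ for $j\neq k$ in the same block; it does not contain $x_j^\alpha x_k^\beta$ for the other three sign pairs. So in your second step you cannot invoke the constraints $|\phi(x_j^\alpha x_k^\beta)|=2$ for all $(\alpha,\beta)$ simultaneously. The paper instead works with the single constraint $|\phi(x_jx_k)|_{\mathrm{cyc}}=2$: writing $u=v_j^{-1}v_i$, one argues directly that the cyclically reduced length of $ux_iu^{-1}x_j$ is at least $4$ unless $u\in\{1,x_i^{\pm1},x_j^{\pm1}\}$, and in the latter cases the extra factor is absorbed into the cyclic centralizer of $x_i$ or $x_j$, yielding $v_i=v_j$. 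This already lands you inside $\langle x_{\sigma(j)}\rangle\cdot\langle x_{\sigma(k)}\rangle$ (in fact in a much smaller set), so your cocycle propagation can then proceed unchanged.

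Two smaller remarks. First, your parenthetical that $\sigma$ ``will be shown to preserve block sizes'' is unnecessary: the statement allows an arbitrary permutation of the basis, and the argument never uses any block-compatibility of $\sigma$. Second, your third step---decomposing each $\tilde g_i$ in normal form with respect to the free splitting to realize block-wise conjugation as a product of the generating partial conjugations---is a point the paper passes over silently (it simply asserts ``hence $\phi\in\mathrm{P}\Sigma(\ell_1,\ldots,\ell_k)$''), so your added care there is warranted rather than superfluous.
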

\begin{proof} It is clear that any element of $\mathrm{P}\Sigma(\ell_1,\ldots, \ell_k)$ preserves $\mathcal{W}$.  For the other direction, first we observe that $\{x_1,\ldots, x_n\}\subset \mathcal{W}$, hence $\phi$ is a symmetric automorphism, up to a product of inversions of some $x_i$.  After precomposing with a permutation, we know that $\phi$ is pure symmetric, i.e. for each $i$ there exists $v_i\in F_n$ such that $\phi:x_i\mapsto v_ix_iv_i^{-1}$.  To prove the claim, we must show that for each pair of generators $x_i,x_j\in F_{\ell_r}$, we have $v_i=v_j$. Consider any pair $x_i,x_j\in F_{\ell_r}$. Then  \[\phi:x_ix_j\mapsto (v_ix_iv_i^{-1})(v_jx_jv_j^{-1}).\] Conjugate by $v_j^{-1}$ to rewrite $\phi(x_ix_j)$ as $ux_iu^{-1}x_j$, where $u=v_j^{-1}v_i$. Now we claim that either $v_i=v_j$, or the length of the conjugacy class of $\phi(x_ix_j)$ is at least 4, contradicting our assumption that $(R_n,\phi)$ has minimal norm. If this length is strictly less than 4 and $u\neq 1$, we must have either $u=x_i^{\pm1}$ or $u=x_j^{\pm1}$.  Without loss of generality, suppose $u=x_i$.  Then $v_i=v_jx_i$ and \[\phi(x_i)=(v_jx_i)x_i(x_i^{-1}v_j^{-1})=v_jx_iv_j^{-1},\] hence $\phi$ conjugates $x_i$ by $v_j$ as well. This proves that $\phi$ conjugates each free factor by the same element, hence $\phi\in \mathrm{P}\Sigma(\ell_1,\ldots,\ell_k)$. 
\end{proof}

It follows from the proposition that $K_{\ell_1,\ldots,\ell_k}=K_{\min(\mathcal{W})}$. By Theorem \ref{ContractibleMinSet}, we know that $K_{\min(\mathcal{W})}$ is contractible, which finishes the first step of the proof.  For the second step of the proof, we appeal to the deformation retraction Collins defined, onto the set of symmetric graphs. Let $(X,\rho)\in CV_n$ be a marked graph.  Collins proves that there exists a forest $E_{-\Sigma}(X)\subset X$ such that the following holds.

\begin{proposition} \label{AsymmetricEdge}Let $T\subset X$ be a maximal tree with collapse map $c:X\rightarrow X/T=R_n$.  Suppose the induced homotopy equivalence $c\circ \rho:R_n\rightarrow R_n$ is a symmetric automorphism, i.e. $[c\circ\rho]\in \mathrm{P}\Sigma_n$. Then $E_{-\Sigma}(X)\subset T$. Moreover, $\Sigma(X):=X/E_{-\Sigma}(X)$ is symmetric.
\end{proposition}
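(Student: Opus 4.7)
The plan is to construct $E_{-\Sigma}(X)$ directly from the homotopy data $(X,\rho)$ and to verify each of the three claims about it. After choosing tight cyclically reduced representatives $\ell_i \subset X$ for the basis loops $\rho_*(x_i)$, I would define an edge $e$ of $X$ to be \emph{asymmetric} if $e$ is not itself an edge-loop of $X$ and $e$ plays the role of a tree-like detour, rather than a principal petal edge, in some $\ell_i$. The forest $E_{-\Sigma}(X)$ is then the subgraph spanned by the asymmetric edges, together with their endpoints. Intuitively, these are precisely the edges that must be collapsed so that each basis loop becomes a single edge-loop, producing the shape of a tree with $n$ petals attached (i.e.\ a symmetric graph).

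To show $E_{-\Sigma}(X)$ is a forest, suppose for contradiction that it contains an embedded cycle $\gamma$. Then $[\gamma] \in H_1(X;\Z)$ is nonzero and expands uniquely as $\sum a_i [\ell_i]$ with not all $a_i$ zero. Each edge of $\gamma$, being asymmetric, contributes to the $\ell_i$'s only through conjugating subwords, which forces balanced signed multiplicities along $\gamma$. A bookkeeping argument comparing signed edge counts in $\gamma$ and in $\sum a_i [\ell_i]$ then yields a contradiction. For the containment $E_{-\Sigma}(X) \subset T$, the hypothesis $[c \circ \rho] \in \mathrm{P}\Sigma_n$ implies that each $\ell_i$ projects in $R_n = X/T$ to a conjugate $v_i x_i v_i^{-1}$. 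Since the edges of $X \setminus T$ are in bijection with the generators $\{x_1,\dots,x_n\}$ via the collapse map, each edge $e_{x_i}$ outside $T$ appears in the single loop $\ell_i$ as the distinguished principal letter, and any further occurrences of $e_{x_j}$ in $\ell_i$ come from balanced pairs inside $v_i v_i^{-1}$. A tightening argument pushes those cancelling pairs into $T$, and concludes that no edge outside $T$ is asymmetric; hence $E_{-\Sigma}(X) \subset T$.

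For the final assertion, collapsing $E_{-\Sigma}(X)$ kills all detours, so each surviving $\ell_i$ reduces to a single edge-loop while $T \setminus E_{-\Sigma}(X)$ remains as a tree connecting these petals; this is precisely a symmetric graph in the sense defined just before Theorem~\ref{ColoredCollins}. The main obstacle is the initial definition: one needs `asymmetric edge' to be intrinsic (independent of any a priori choice of $T$), so that the forest property, containment in every witness tree $T$, and symmetry of the quotient all hold simultaneously. Formalizing the dichotomy between `principal' and `tree-like' occurrences of an edge in a basis loop, via the balanced-count behavior of conjugating words in $\mathrm{P}\Sigma_n$, is the technical heart of the argument and the step I expect to spend the most effort on.
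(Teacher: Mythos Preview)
The paper does not actually prove this proposition: it is stated as a result of Collins \cite{Col89}, with the forest $E_{-\Sigma}(X)$ taken as already constructed by Collins. The paper then \emph{adapts} Collins' method to the colored setting in the subsequent Proposition~\ref{WIncoherent}, and that adaptation reveals the shape of Collins' argument. There, one first fixes a $\mathcal{W}$-compatible maximal tree $T$, defines the bad edges \emph{relative to} $T$ (an edge $e\in T$ is incoherent if the separating pair $\{e,e'\}$ formed with its companion $e'\notin T$ separates two cycles of the same color), and then proves that the resulting set is independent of the choice of $T$. So the strategy is ``define relative to $T$, then prove tree-independence,'' not ``define intrinsically from the loops.''

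Your route is genuinely different: you try to read off asymmetric edges directly from the tight basis loops $\ell_i$, without ever choosing $T$. That is appealing, but as written there is a real gap. The dichotomy ``principal petal edge'' versus ``tree-like detour'' is never made precise, and your own final paragraph concedes that formalizing it is the technical heart. Without that definition, none of the three claims can be checked. More concretely, the containment step breaks down as stated: under the hypothesis $[c\circ\rho]\in\mathrm{P}\Sigma_n$, an edge $e_{x_j}\notin T$ can very well occur inside a tight $\ell_i$ with $i\neq j$, as part of a subpath $e_{x_j}\cdot(\text{arc in }T)\cdot e_{x_j}^{-1}$; this is immersed in $X$, so there is nothing to ``tighten,'' and your sentence ``a tightening argument pushes those cancelling pairs into $T$'' has no content. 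With your definition read literally (asymmetric if $e$ plays a detour role in \emph{some} $\ell_i$), such an $e_{x_j}$ would be declared asymmetric while lying outside $T$, contradicting the desired containment. You would at minimum need to sharpen the definition to something like ``$e$ never appears as the unique surviving letter after cyclic reduction of any $c(\ell_i)$,'' and then argue independence of $T$ --- at which point you are essentially back to Collins' relative-then-independent strategy.
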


Now suppose $(X,\rho)\in K_{\min(\mathcal{W})}$.  Since $\mathrm{P}\Sigma(\ell_1,\ldots,\ell_k)\leq \mathrm{P}\Sigma_n$, it follows from Proposition \ref{AsymmetricEdge} that if $T\subset X$ is any maximal tree such that the collapse map induces an automorphism in $\mathrm{P}\Sigma(\ell_1,\ldots,\ell_k)$, then $E_{-\Sigma}(X)\subset T$. The collapse $(X,\rho)\mapsto (\Sigma(X), \Sigma(\rho))$ thus gives a well-defined $\mathrm{P}\Sigma(\ell_1,\ldots,\ell_k)$-equivariant map \[h_1:K_{\min(\mathcal{W})}\rightarrow \left ( K_{\min(\mathcal{W})} \right )^\Sigma,\] where $\left (K_{\min(\mathcal{W})} \right )^\Sigma$ is the sub-poset of all symmetric graphs in $K_{\min(\mathcal{W})}$. Because  $h_1(X,\rho)\leq(X,\rho)$ for all $(X,\rho)\in K_{\min(\mathcal{W})}$, and $h_1(X,\rho)$ is symmetric Quillen's theorem implies that $h_1$ is a homotopy equivalence. This completes the second step of the proof. 

For the final step, we will prove an analogous version of Proposition \ref{AsymmetricEdge} for colored symmetric graphs. For any symmetric graph, any two edges lie on a unique embedded circle.  If $(X,\rho)\in \left ( K_{\min(\mathcal{W})} \right )^\Sigma$ and $T\subset X$ is a maximal tree such that the collapse map induces an automorphism in $\mathrm{P}\Sigma(\ell_1,\ldots, \ell_k)$, we say $T$ is \emph{$\mathcal{W}$-compatible}. By definition, for any $(X,\rho)\in  \left ( K_{\min(\mathcal{W})} \right )^\Sigma$ there is at least one $\mathcal{W}$-compatible tree $T_0$.  From collapsing $T_0$ to form a colored rose, we get an induced coloring of cycles on $X$.  Now consider any maximal tree $T\subset X$.  For every edge $e\in T$, there is a unique \emph{companion} edge $e'\in X\setminus T$ such that $e'$ connects the two components of $T\setminus \{e\}$ \cite[Lemma 4.2]{Col89}.  

An edge $e\in T$ is called \emph{$\mathcal{W}$-incoherent rel $T$} if $e\cup e'$ separates two cycles of the same color. We define \[E_{-\mathcal{W}}(T)=\{e\in T \,| \, \mbox{$e$ is $\mathcal{W}$-incoherent rel $T$}\}.\]

Note that for any cycle $\gamma$ of length $m$ in $X$, a maximal tree contains exactly $m-1$ of the edges in $\gamma$. The extension of Proposition \ref{AsymmetricEdge} to our setting is that $E_{-\mathcal{W}}(T)$ is independent of $T$. 

\begin{proposition} \label{WIncoherent} Let $(X,\rho)\in \left ( K_{\min(\mathcal{W})} \right )^\Sigma$ and suppose $T,T'\subset X$ are maximal, $\mathcal{W}$-compatible trees.  Then \[E_{-\mathcal{W}}(T)=E_{-\mathcal{W}}(T').\]
\end{proposition}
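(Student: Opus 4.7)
The plan is to mirror Collins' proof of Proposition~\ref{AsymmetricEdge}, which provides the analogous invariance statement in the uncolored symmetric case, adapting the argument to track cycle colors in the colored setting. The first step is to reduce to the case of an elementary tree swap $T' = (T \setminus \{e_0\}) \cup \{e_0'\}$, where $e_0 \in T$ and $e_0'$ is its companion in $X \setminus T$. Since any two maximal trees in a finite graph are connected by a sequence of such moves, it suffices to treat this case. I would additionally verify that the intermediate trees in such a sequence between two $\mathcal{W}$-compatible trees can themselves be chosen $\mathcal{W}$-compatible, using that $\mathcal{W}$-compatibility corresponds to the collapse inducing an element of $\mathrm{P}\Sigma(\ell_1,\ldots,\ell_k) \leq \mathrm{P}\Sigma_n$, and that an elementary swap modifies this induced automorphism by a colored Nielsen-type move preserving the colored rose structure.

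Granting the elementary swap reduction, the set equality splits into two claims. First, I would show that $e_0 \notin E_{-\mathcal{W}}(T)$ and, symmetrically, $e_0' \notin E_{-\mathcal{W}}(T')$. Since both $T$ and $T'$ are $\mathcal{W}$-compatible, the unique cycle containing $e_0$ and $e_0'$ must, upon collapse of either tree, map to a single loop in the colored rose. This cycle is therefore monochromatic, and the complementary components of $X \setminus (e_0 \cup e_0')$ share this color in any cycles they contain, preventing incoherence of $e_0$ with respect to $T$ and of $e_0'$ with respect to $T'$.

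Second, for each $e \in T \cap T'$, I would show that $e$ is $\mathcal{W}$-incoherent rel $T$ if and only if it is $\mathcal{W}$-incoherent rel $T'$. Let $e^\ast$ and $e^{\ast\ast}$ denote the companions of $e$ in $T$ and $T'$, respectively. If $e^\ast = e^{\ast\ast}$, the two notions coincide tautologically, so we may assume they differ, which forces $e^\ast = e_0'$ because $e_0'$ is the only edge whose status with respect to the tree changes. Using the symmetric graph hypothesis that any two edges lie on a unique embedded cycle, $e^{\ast\ast}$ is determined by symmetric difference of the fundamental cycles of $e^\ast$ and $e_0'$ with respect to $T$. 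Because the cycle $e_0 \cup e_0'$ is monochromatic by $\mathcal{W}$-compatibility, this symmetric-difference operation preserves the color structure of the cycles on either side of $e \cup (\text{companion})$, so the property of separating two same-colored cycles transfers between $e^\ast$ and $e^{\ast\ast}$.

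The main obstacle is the third paragraph: carefully tracking how companions and their separated cycles transform under the elementary swap and verifying that the same-color separation is preserved. The symmetric graph hypothesis provides the rigid combinatorial structure (unique cycles through edge pairs, well-defined companions) needed for the bookkeeping, while the $\mathcal{W}$-compatibility of both $T$ and $T'$ ensures the cycle coloring is consistent throughout the argument. As in Collins' original proof, the result will follow from patient case analysis once this rigidity is in place.
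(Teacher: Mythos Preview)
Your approach differs substantially from the paper's, and it has genuine gaps. The paper does not reduce to elementary swaps at all: given $e \in E_{-\mathcal{W}}(T)$ with companion $e'$, it supposes for contradiction that $e \notin T'$ (so $e' \in T'$, since a maximal tree in a symmetric graph contains all but one edge of each embedded cycle) and analyzes the change-of-marking automorphism $X/T \to X/T'$ directly. Because $e \cup e'$ separates same-color cycles labelled $uyu^{-1}$ and $uzu^{-1}$, this automorphism conjugates $z$ by a word involving the generator $x$ of the $e$--$e'$ cycle but conjugates $y$ by a word not involving $x$; equating the conjugating words (as $\mathcal{W}$-compatibility of $T'$ would require) and passing to the abelianization gives a contradiction.

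Your reduction step is not justified: you assert that two $\mathcal{W}$-compatible trees can be joined by elementary swaps \emph{through $\mathcal{W}$-compatible trees}, but this is exactly the kind of statement that requires proof, and you give none. More seriously, your reasoning in the second step is wrong as written. You observe that the cycle through $e_0$ and $e_0'$ is monochromatic and conclude that ``the complementary components of $X \setminus (e_0 \cup e_0')$ share this color in any cycles they contain.'' This does not follow, and in any case would establish incoherence rather than prevent it: incoherence of $e_0$ means some color class has cycles on both sides of $e_0 \cup e_0'$, and the color of the $e_0$--$e_0'$ cycle itself is irrelevant. The correct argument is that the swap $T \mapsto T'$ induces a single partial conjugation of one component by the loop corresponding to $e_0 \cup e_0'$, and $\mathcal{W}$-compatibility of $T'$ forces this to lie in $\mathrm{P}\Sigma(\ell_1,\ldots,\ell_k)$, hence each color class lies entirely on one side. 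But this is precisely the paper's direct argument specialized to an elementary swap---so once you repair step two you are essentially running the paper's proof under an unnecessary (and unproven) inductive hypothesis.
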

\begin{proof} Since $X$ is symmetric and $T$ is $\mathcal{W}$-compatible, we can color every cycle of $X$  using $T$ and consider each cycle as labelled by a specific conjugate of the generators $v_ix_iv_i^{-1}$, where if $x_i$ and $x_j$ belong to the same $F_{\ell_r}$, then $v_i=v_j$. Now suppose $e\in T$ is $\mathcal{W}$-incoherent.  Then $e$ and its companion edge $e'$ lie on some cycle $\gamma_0$ labelled by $vxv^{-1}$ for some generator $x$, say.  By assumption$e\cup e'$ separate two loops $\gamma_1$ and $\gamma_2$ of the same color labelled by $uyu^{-1}$ and $uzu^{-1}$, respectively, for some generators $y$, and $z$. 

Suppose for contradiction that $e\notin T'$.  Then $e'\in T'$ and if we collapse $T'$ we can consider the marking change from $X/T\rightarrow X/T'$.  Let $X_1\cup X_2$ be the two components of $X\setminus\{e,e'\}$, so that $\gamma_i\subset X_i$, $i=1,2$. Suppose that $y= y_1,\ldots y_s$ are the generators whose conjugates label cycles in $X_1$, and $z=z_1,\ldots, z_{n-s+1}$ are the generators whose conjugates label cycles in $X_2$.   Choosing a basepoint in $X_1$ on $\gamma_0$, the change of marking $X/T\rightarrow X\rightarrow X/T'$ conjugates every cycle in $X_2$ by $vxv^{-1}$, but none from $X_1$.  Explicitly, under the change of marking we have \[uyu^{-1}\mapsto w_1uyu^{-1}w_1^{-1},\] where $w_1$ is a product of conjugates the $y_i$, and \[uzu^{-1}\mapsto (vx^{\pm1}v^{-1})w_2(uzu^{-1})w_2^{-1}(vx^{\mp1}v^{-1}),\] where $w_2$ is a product of conjugates of the $z_j$. Note that since the change of marking is induced from a change of maximal tree, at most one conjugate of $y_i^{\pm1}$ occurs in $w_1$ and at most one conjugate of $z_j^{\pm1}$ occurs in $w_2$.  Comparing the images on $y$ and $z$, if $T'$ is $\mathcal{W}$-compatible, we must have that 
\begin{equation}\label{ConjCoefficient}w_1u=vx^{\pm1}v^{-1}w_2u\Rightarrow w_1=vx^{\pm 1}v^{-1}w_2.\end{equation}
Looking at their images in the abelianization, we obtain $[w_1]=\sum_{i=1}^sa_i[y_i]$ while the right hand side is $\pm[x]+\sum_{j=1}^{n-s+1}b_j[z_j]$, where $a_i, b_j\in\{-1,0,1\}.$ Thus, the left- and right-hand sides of equation (\ref{ConjCoefficient}) cannot possibly be equal, and we conclude that $T'$ is not $\mathcal{W}$-compatible, contradicting our hypothesis.
%
%Since $\gamma_1$ and $\gamma_2$ have the same color, $y$ and $z$ belong to the same free factor $F_{l_i}$, hence the automorphism induced by $X\rightarrow X/T$ is not in $P\Sigma(l_1,\ldots,l_k)$.
\end{proof}

\begin{proof}[Proof of Theorem~\ref{ColoredCollins}]It follows from Proposition \ref{WIncoherent} that we can define $E_{-\mathcal{W}}(X)$ to be $E_{-\mathcal{W}}(T)$ for any $\mathcal{W}$-compatible tree in $X$. In particular, $E_{-\mathcal{W}}(X)\subset T$ is a forest. From the definition of $\mathcal{W}$-incoherent, we see that $X/E_{-\mathcal{W}}(X)$ a is colored symmetric graph. The collapse $X\mapsto X/E_{-\mathcal{W}}(X)$ therefore gives a well-defined $\mathrm{P}\Sigma(\ell_1,\ldots,\ell_k)$-equivariant map
\[h_2: \left ( K_{\min(\mathcal{W})} \right )^\Sigma\rightarrow \mathrm{C}\Sigma(R_{\ell_1},\ldots, R_{\ell_k}). \]
 Since $h_1(X,\rho)$ is colored symmetric for all $(X\rho)\in \left ( K_{\min(\mathcal{W})} \right )^\Sigma$, we can apply Quillen's theorem to see that $h_2$ is also a homotopy equivalence.  In particular, $\mathrm{C}\Sigma(R_{\ell_1},\ldots, R_{\ell_k})$ is contractible.  Since $\mathrm{P}\Sigma(\ell_1,\ldots,\ell_k)$ is torsion-free, its action on $K_{\min(\mathcal{W})}\subset K_n$ is free, and by Proposition \ref{MinAut} the quotient is compact.  Equivariance of $h_1$ and $h_2$, and hence $h=h_2\circ h_1$, gives a homotopy equivalence 
 \[\overl{h}: K_{\min(\mathcal{W})}/\mathrm{P}\Sigma(\ell_1,\ldots,\ell_k)\rightarrow \mathrm{C}\Sigma(R_{\ell_1},\ldots, R_{\ell_k})/\mathrm{P}\Sigma(\ell_1,\ldots,\ell_k).\] 

In order to finish the proof, we just need to identify $\mathrm{C}\Sigma(R_{\ell_1},\ldots, R_{\ell_k})/\mathrm{P}\Sigma(\ell_1,\ldots,\ell_k)$ with $W(R_{\ell_1},\ldots,R_{\ell_k})$. To see this, fix once and for all a metric and a coloring on $R_n$. Using the fact that colored symmetric graphs are \emph{a fortiori} symmetric, we consider all marked, colored symmetric blow-ups $X$ where the length of each colored cycle is the same as in the the colored rose $R_n$.  This gives a $\mathrm{P}\Sigma(\ell_1,\ldots,\ell_k)$-equivariant embedding $\psi:\mathrm{C}\Sigma(R_{\ell_1},\ldots, R_{\ell_k})\hookrightarrow CV_n$.  The quotient of $\text{im}(\psi)$ by $\mathrm{P}\Sigma(\ell_1,\ldots,\ell_k)$ is $W(R_{\ell_1},\ldots,R_{\ell_k})$. This proves the theorem.\end{proof}

%%%%%%%%%%%%%%%%%%%%%%%%%%%%%%%%%%%%%%%%
%%%%%%%%%%%%%%%%%%%%%%%%%%%%%%%%%%%%%%%%

\section{The Fibers of the Period Map on Torelli Space}\label{PeriodFiber}
In this section we will combine results of the previous sections to describe the fibers of the period mapping $\Phi:\calT_n\rightarrow\calQ_n$. If we cast the results of Section \ref{Decomp} in terms of the spaces defined in Section \ref{Wedge} we obtain the following theorem, which suffices to prove Theorem A.

\begin{theorem}\label{Fiber} Let $M$ be a positive definite quadratic form in the image of $\Phi$, and let $D$ be a connected component of $\Phi^{-1}(M)$. Then
\begin{enumerate}
\item $D$ admits a quasi-fibration over a product of simplices where the fibers are all configuration spaces of wedges of graphs.%In particular, each component is homotopy equivalent to a locally CAT(0) metric space.
\item The stabilizer of $D$ in $GL_n(\Z)$ is isomorphic to $\Isom(\mathcal{C}(\Gamma))$ for some maximally separated $\Gamma$.     
\item $D$ is aspherical and its fundamental group injects into $\pi_1(\calT_n)$. Moreover, the image of $\pi_1(D)$ is a conjugate of $\mathrm{P} \Sigma (\ell_1, \dots, \ell_k)$ for some choice of $\ell_i$ ($1 \leq i \leq k$). 
\end{enumerate}
\end{theorem}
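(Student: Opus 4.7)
The plan is to assemble the three parts of the theorem from the canonical decomposition of Section~3 and the configuration space machinery of Section~4. First, I would pick a representative $(\Gamma,\rho) \in D$ and, by Lemma~\ref{uniqueblow-up}, pass to a maximally separating representative $(\Gamma^*,\rho^*)$ inside the same component $D$. The isomorphism type of the component graph $\mathcal{C}(\Gamma^*)$, together with the induced block-diagonal homology marking on its components, is then a combinatorial invariant of $D$: Propositions~\ref{splittingblow-up} and~\ref{SepPair} together say that, starting from $(\Gamma^*,\rho^*)$, every other point of $D$ is reached by (a) redistributing edge lengths inside each separating edge pair equivalence class, (b) collapsing some of those pairs, and (c) re-wedging the components of $\mathcal{C}(\Gamma^*)$ together at separating vertices.

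For part (1), these three moves produce the quasi-fibration structure: the base is a product of simplices, with one $(r_i-1)$-simplex for each separating edge pair class of size $r_i$ (recording how the total length of that class is distributed among its edges), while the fiber over an interior point is the configuration space of wedges $W(\tilde\Gamma_1,\ldots,\tilde\Gamma_k)$ of the components of $\mathcal{C}(\Gamma^*)$. Over boundary strata of the base, certain separating pairs collapse, producing new separating vertices and changing the combinatorial type of the graph; by Proposition~\ref{Config} the resulting configuration spaces remain homotopy equivalent to the generic fiber, which is what yields a quasi-fibration rather than a genuine bundle.

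For part (2), an element of $\GL_n(\Z)$ that stabilizes $D$ must, after moving to the maximally separating representative, be realized by a self-identification of $(\Gamma^*,\rho^*)$ via a graph isometry. The block-diagonal form of $\rho^*$ forces such an isometry to permute the components of $\mathcal{C}(\Gamma^*)$ compatibly with their individual markings, yielding the identification with $\Isom(\mathcal{C}(\Gamma^*))$. For part (3), the base is contractible, so the long exact sequence of homotopy groups of the quasi-fibration gives $\pi_k(D) \cong \pi_k(F)$; by Proposition~\ref{Config}, each fiber $F = W(\tilde\Gamma_1,\ldots,\tilde\Gamma_k)$ is homotopy equivalent to $W(R_{\ell_1},\ldots,R_{\ell_k})$, which by Theorem~\ref{ColoredCollins} is aspherical with fundamental group $\mathrm{P}\Sigma(\ell_1,\ldots,\ell_k)$. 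For the $\pi_1$-injection into $\calI_n = \pi_1(\calT_n)$, I would lift $D$ to a connected component $\tilde D \subset CV_n$ and identify $\pi_1(D)$ with the $\calI_n$-stabilizer of $\tilde D$ acting by deck transformations, using the contractibility of $CV_n$ together with asphericity of the fibers; the generators of $\mathrm{P}\Sigma(\ell_1,\ldots,\ell_k)$ are partial conjugations that act trivially on $H_1(F_n)$, so they genuinely lie in $\calI_n$.

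The main obstacle will be making the quasi-fibration structure precise along degenerate strata of the base, where several separating edge pair classes collapse simultaneously and the combinatorial type of the underlying graph changes. Proposition~\ref{Config} handles homotopy invariance of individual fibers pointwise, but assembling these into a coherent quasi-fibration requires verifying that as one approaches a boundary stratum the fibers glue in a way that satisfies the quasi-fibration condition, namely that inclusion of each fiber into its homotopy fiber is a weak equivalence.
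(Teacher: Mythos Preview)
Your outline matches the paper's argument closely: the quasi-fibration is exactly the paper's Lemma~\ref{quasifibration}, part (3) uses Proposition~\ref{Config} and Theorem~\ref{ColoredCollins} in the same way (with the $\pi_1$-injection handled by realizing the homotopy equivalence to $W(R_{\ell_1},\ldots,R_{\ell_k})$ inside $\calT_n$ and lifting to $CV_n$), and your identification of the main obstacle---gluing fibers over degenerate strata---is precisely what the paper addresses via the complex-of-spaces structure.

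For part (2), however, your assertion that an $A \in \GL_n(\Z)$ stabilizing $D$ ``must be realized by a self-identification of $(\Gamma^*,\rho^*)$ via a graph isometry'' is not justified as stated. The $\GL_n(\Z)$-action changes only the homology marking, so $A$ sends $(\Gamma^*,\rho^*)$ to another point of $D$ with the same underlying metric graph but a possibly inequivalent marking; there is no a priori reason this image must coincide with $(\Gamma^*,\rho^*)$ as a point of $\calT_n$. The paper closes this gap by passing to a \emph{barycenter} (the maximally separating blow-up with equal edge lengths in each separating class; Lemma~\ref{BaryIsom}), decomposing $\Aut(M) = \bigl(\prod_i \Aut(M_i)\bigr) \rtimes P$ according to the block form $M = \bigoplus_i M_i$, and then arguing case by case (Lemma~\ref{SepComponent}) that each element of $\Aut(M)$ either permutes the connected components of $\Phi^{-1}(M)$ or is induced by an isometry of $\mathcal{C}(\Gamma)$. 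Your block-diagonal remark points in this direction, but the dichotomy ``stabilize $D$ $\Leftrightarrow$ come from $\Isom(\mathcal{C}(\Gamma))$'' genuinely requires that case analysis rather than an appeal to a fixed point.
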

Before proceeding to the proof, we record the following.

\begin{remark} Given a positive definite quadratic form $M$ in the image of $\Phi$, parts (1) and (2) of the theorem allow us to completely describe the fiber $\Phi^{-1}(M)$ as well as the action of $\text{stab}_{\mathrm{GL}_n(\Z)}(M)$ on the components of the fiber.  
\end{remark} 

\begin{remark}Bridson--Vogtmann \cite{BV95} showed that the spine of $CV_n$ ($n \geq 3$) does not support a CAT(0) metric. The presence of CAT(0) subspaces in $CV_n$, on the other hand, is open in general. It may be shown that when $\mathcal{C}(\Gamma)$ has at most three components, the fiber containing $(\Gamma, \rho)$ is locally CAT(0), and hence lifts to a CAT(0) subspace of $CV_n$, when endowed with the simplicial metric. When $\mathcal{C}(\Gamma)$ has four or more components, the simplicial metric on the fiber is not locally CAT(0).
\end{remark}
%Fix some form $M$ in the image of $\Phi$, and consider the fiber $\Phi^{-1}(M)$.  The fiber may be disconnected, but choose any marked graph $(\Gamma,\rho)\in\Phi^{-1}(M)$ and consider the splitting graph $\mathcal{C}(\Gamma)=\coprod_{i=1}^n\Gamma_i$.  Proposition \ref{splittingblow-up} and Lemma \ref{uniqueblow-up} imply that if $(\Gamma,\rho), (\Gamma',\rho')\in\Phi^{-1}(M)$ with all $\Gamma_i,\Gamma_i'$ maximal then $\mathcal{C}(\Gamma)\cong S_1(\Gamma')$.  
%
%Let $(\Gamma,\rho)\in\Phi^{-1}(M)$ with all components of $\Gamma_i$ maximal, and consider the space $X_\Gamma$ obtained by all possible edge collapses of $\Gamma$ which remain in the fiber.  We have

The proof will follow from a sequence of lemmas.  Fix some form $M$ in the image of $\Phi$, and consider the fiber $\Phi^{-1}(M)$.  In general, the fiber may be disconnected.  Choose a point $(\Gamma,\rho)\in \Phi^{-1}(M)$ and consider the connected component $D_{(\Gamma,\rho)}$ containing $(\Gamma,\rho)$. Let $\mathcal{C}(\Gamma)=\coprod_{i=1}^k\Gamma_i$ denote the splitting graph of $\Gamma$.  After acting by $\GL_n(\Z)$ we can assume that the marking on $\Gamma$ decomposes as a product $(\Gamma,\rho)=\bigoplus_{i=1}^k (\Gamma_i,\rho_i)$, with a corresponding decomposition $M=\bigoplus_{i=1}^kM_i$.    Proposition \ref{splittingblow-up} and Lemma \ref{uniqueblow-up} imply that $D_{(\Gamma,\rho)}$ consists of all the marked graphs that can be obtained from $\mathcal{C}(\G)$ by the following two moves:
\begin{enumerate}
\item Expand or collapse a pair of separating edges in the $\G_i$.
\item Wedge together the $\G_i$.  
\end{enumerate}
These two operations interact with each other in a compatible and essentially disjoint way, as the following lemma shows.
\begin{lemma}\label{quasifibration} $D_{(\Gamma,\rho)}$ has the structure of a complex of spaces with base space a product of simplices and the fiber a configuration space of wedges. In particular, $D_{(\Gamma,\rho)}$ admits a quasi-fibration over a product of simplices.   
\end{lemma}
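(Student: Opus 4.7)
The plan is to exhibit $D_{(\Gamma,\rho)}$ as a complex of spaces fibered over a product of simplices coming from operation (1), whose fibers are configuration spaces of wedges coming from operation (2). I would start by normalizing, replacing $(\Gamma,\rho)$ with its maximally separating blow-up (Lemma~\ref{uniqueblow-up}) inside each component $\tilde\Gamma_i$ of $\mathcal{C}(\Gamma)$, and denote the result by $\Gamma_i^{\max}$. Let $[e_1^{(i)}],\dots,[e_{r_i}^{(i)}]$ enumerate the pairwise-separating edge classes in $\Gamma_i^{\max}$, and let $L_j^{(i)}$ be the total length of the edges in $[e_j^{(i)}]$ (which is determined by $M$). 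Associate to each class the simplex $\Delta_j^{(i)}$ of positive length distributions on its edges summing to $L_j^{(i)}$, and set $\Delta=\prod_{i,j}\Delta_j^{(i)}$, a product of open simplices.

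The next step is to define a projection $\pi\colon D_{(\Gamma,\rho)}\to\overline{\Delta}$ (the closure of $\Delta$, which remains a product of simplices). Given any $(\Gamma',\rho')\in D_{(\Gamma,\rho)}$, Proposition~\ref{splittingblow-up} says that $\mathcal{C}(\Gamma')$ is (up to reordering) a blow-up of $\mathcal{C}(\Gamma)$ component by component, and Proposition~\ref{SepPair} together with Lemma~\ref{uniqueblow-up} says that this blow-up in each component is uniquely obtained by expanding certain separating edge pairs. Recording, for each class $[e_j^{(i)}]$, the lengths of the edges currently present (zero if the class has been partially collapsed) defines $\pi(\Gamma',\rho')\in\overline{\Delta}$, and this assignment is clearly continuous.

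Over a point $p\in\Delta$ in the open interior, each $\Gamma_i^{\max}(p)$ is determined as a metric graph, and by Propositions~\ref{splittingblow-up} and~\ref{SepPair} the only remaining freedom in $\pi^{-1}(p)$ is the choice of how to wedge the components $\Gamma_1^{\max}(p),\dots,\Gamma_k^{\max}(p)$ together into a graph with trivial intersection pattern, modulo isotopy of the wedge points. By the definition of Section~\ref{Wedge}, this is exactly the configuration space of wedges $W(\Gamma_1^{\max}(p),\dots,\Gamma_k^{\max}(p))$. Over a boundary stratum where some class $[e_j^{(i)}]$ has been collapsed, the fiber is the configuration space of wedges of the correspondingly collapsed components; by Proposition~\ref{Config}, all such fibers are homotopy equivalent to the common model $W(\tilde\Gamma_1,\dots,\tilde\Gamma_k)$, with the homotopy equivalence induced by componentwise forest collapse. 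This identifies $D_{(\Gamma,\rho)}$ as a complex of spaces over the product of simplices $\overline{\Delta}$, with fiber the configuration space of wedges.

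For the quasi-fibration statement, I would argue that since all inclusions of fibers over faces are homotopy equivalences (being induced by forest collapses as above), and since the base $\overline{\Delta}$ is a product of simplices with a standard filtration by dimension, one can apply the gluing criterion for quasi-fibrations stratum by stratum (cf.~Dold--Thom, or the standard fact that a complex of spaces whose attaching maps are homotopy equivalences is a quasi-fibration over the base). The main obstacle I anticipate is making the last step rigorous: checking that the attaching data as one moves from the open stratum into a codimension-one face really is the forest-collapse map underlying Proposition~\ref{Config}, uniformly in the wedging pattern. The key input that resolves this is that the maximally separating blow-up of Lemma~\ref{uniqueblow-up} depends only on the combinatorial data of vertex--edge pairs, so the collapse of an edge in a class $[e_j^{(i)}]$ is compatible with every pattern of wedging described by the labelled trees in Section~\ref{Wedge}. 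Once this compatibility is verified, the complex-of-spaces structure and the quasi-fibration conclusion follow.
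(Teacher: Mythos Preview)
Your proposal is correct and follows essentially the same approach as the paper: define a projection to a product of simplices by recording the relative lengths within each separating edge class, identify the fibers as configuration spaces of wedges, and deduce the quasi-fibration from the fact that the face maps (forest collapses) are homotopy equivalences via Proposition~\ref{Config}. The one place where the paper is more careful than your sketch is the identification of the fiber with $W(\Gamma_1,\dots,\Gamma_k)$ rather than $\widetilde{W}(\Gamma_1,\dots,\Gamma_k)$: the paper argues explicitly that two wedgings give the same homology-marked graph in $\calT_n$ if and only if they differ by an isometry acting trivially on homology, and then checks that such an isometry is the identity on non-circle components and a rotation on circle components, so the identification passes to the $\Isom_0$-quotient; your phrase ``modulo isotopy of the wedge points'' gestures at this but does not pin it down.
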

\begin{proof} Suppose $\Gamma_i$ has $k_i$ equivalence classes of separating edges of size $r_{i,1},\ldots,r_{i,k_i}$.  Denote by $\Delta^r$ the standard $r$-simplex. We define a map 
\[ l=\prod_{i=1}^kl_i: D_{(\Gamma,\rho)}\rightarrow\Delta_{(\Gamma,\rho)}= \prod_{i=1}^k\prod_{j=1}^{j=k_i}\Delta^{r_{i,j}}, \]
 where $l_i$ records the point in each simplex corresponding to the relative lengths of edges in each equivalence class of separating edges in $\Gamma_i$. Proposition \ref{splittingblow-up} implies that this is well-defined no matter how the components $\Gamma_i$ are wedged together. 
 
Now fix a point $p$ in the target $\Delta_{(\Gamma,\rho)}$, and a point in $l^{-1}(p)$, which by a slight abuse of notation, we also call $(\G,\rho)$. We claim that $l^{-1}(p)$ is the configuration space of all possible ways of wedging together the components of $\mathcal{C}(\G)$. By Proposition \ref{SepPair}, none of the lengths of edges in $\mathcal{C}(\G)$ change within a fiber, hence there is a surjective map \[\tilde{w}:\widetilde{W}(\G_1,\ldots,\G_k)\rightarrow l^{-1}(p).\]  Suppose $\tilde{w}$ is not injective.  Then there exist $x_1, x_2\in \widetilde{W}(\G_1,\ldots,G_k)$ with $\tilde{w}(x_1)=\tilde{w}(x_2)$. There is a fixed a homology-marking on each of the $\Gamma_i$, and since none of the $\Gamma_i$ have trivial homology, this implies that $x_1$ and $x_2$ have to come from the same pattern $\widetilde{T}(\G_1,\ldots,\G_k)\subset \widetilde{W}(\G_1,\ldots,G_k)$.  By definition, there must be an isometry from the graph determined by $x_1$ and the graph determined by $x_2$, which preserves each component of $\mathcal{C}(\Gamma)$.  Thus there are isometries $f_i:\Gamma_i\rightarrow \Gamma_i$, which map the wedge points on $\Gamma_i$ determined by $x_1$ to the wedge points determined by $x_2$, and which act trivially on the homology-marking. But if $\G_i\ncong S^1$ and $f_i$ acts trivially on homology, then $f_i$ must be the identity.  If $\G_i\cong S^1$ and $f_i$ acts trivially on homology then $f_i$ is a rotation.  By definition of $W(\G_1,\ldots,\G_k)$, we have that $\tilde{w}$ factors through a map $w:W(\G_1,\ldots,\G_k)\rightarrow l^{-1}(p)$, which is a homeomorphism.  

On the interior of each product of faces of $\Delta_{(\Gamma,\rho)}$, the homeomorphism type of each fiber is the same.  Forest collapses induce maps between configurations spaces of wedges, which are homotopy equivalences by Proposition \ref{Config}. Thus $D_{(\Gamma,\rho)}$ has the structure of  a complex of spaces where all face maps are homotopy equivalences.  Since the base is contractible, the fact that all fibers are homotopy equivalent implies that $l:D_{(\Gamma,\rho)}\rightarrow\Delta_{(\Gamma,\rho)}$ is a quasi-fibration.  
\end{proof}
%The previous lemma proves that each component $D_{(\Gamma,\rho)}$ has the structure of a quasi-fibration. %For the claim about the simplicial metric, it suffices to show that at each point $p\in D_{(\Gamma,\rho)}$ there is a neighborhood $U\ni p$ on which the simplicial metric is CAT(0). To see this, first assume that $p$ represents a point where all wedge points lie on the interior of edges. A neighborhood of this point in the fiber is obtained by varying the lengths of all pairwise separating edge classes, where now we consider separating edge-pairs in $\G$ and not just  $\mathcal{C}(\G)$.  Thus, two edges may be a separating pair if they share a vertex which is separating. In this case we see that a neighborhood of $p$ in $D_{(\Gamma,\rho)}$ is isometric to the interior of a product of simplices, one for each pairwise separating edge class.

%Now let $p$ be an arbitrary point of $D_{(\Gamma,\rho)}$.  The graph corresponding to $p$ may have two or more components of $\mathcal{C}(\Gamma)$ wedged together at a single point.  In a neighborhood of $p$ are finitely many marked isomorphism types of graphs where all wedge points lie in the midpoints of edges.  For each of these, a neighborhood is a product of simplices, and for $p$, as in the configuration space of wedges, a neighborhood is isometric to the disjoint union of these, glued along convex subspaces.  Therefore, since products of Euclidean simplices are CAT(0), we obtain that the simplicial metric on $D_{(\Gamma,\rho)}$ is locally CAT(0) at each point. This proves the first part of the theorem.

To prove the second part of Theorem~\ref{Fiber}, we need to analyze the stabilizer of a marked graph in the fiber.  By the $\GL_n(\Z)$-equivariance of $\Phi$ and the fact that $\mathcal{I}_n$ is torsion-free, we know that the stabilizer of $(\Gamma,\rho)\in \Phi^{-1}(M)$ injects into $\text{stab}_{\GL_n(\Z)}(M)$. First, we need a lemma about maximal blow-ups.  
%The next lemma states that maximal graphs in the fiber have maximal isometry groups.  

\begin{lemma} \label{BaryIsom} Suppose $\Gamma$ does not have separating vertices. Then there exists a maximal blow-up $\Gamma'$ of $\Gamma$ such that $\Isom(\Gamma)\leq \Isom(\Gamma')$.
\end{lemma}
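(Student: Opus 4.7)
The plan is to exploit the combinatorial uniqueness of the maximal blow-up established in Lemma~\ref{uniqueblow-up}, together with the freedom to choose lengths on the newly inserted edges. Recall from the proof of that lemma that the combinatorial type of a maximally separating blow-up of $\Gamma$ is determined by the set $\mathcal{P}$ of vertex-edge pairs $(e,v)$ for which $\Gamma\setminus\{e,v\}$ is disconnected and each of its two components contains at least two edges incident at $v$. Each such pair contributes a unique new edge $f_{(e,v)}$ inserted at $v$ that separates the two sides.

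First I would observe that every $\phi\in\Isom(\Gamma)$ permutes $\mathcal{P}$: the defining conditions on $(e,v)$ are purely combinatorial, hence preserved by isometries (which are graph automorphisms that happen to respect edge lengths). In particular, $\Isom(\Gamma)$ acts on the finite set $\mathcal{P}$, and this action partitions $\mathcal{P}$ into orbits.

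Second, I would build $\Gamma'$ by choosing an $\Isom(\Gamma)$-equivariant metric on the combinatorial maximal blow-up. Concretely, pick a small $\varepsilon>0$ and, for each orbit $O\subset\mathcal{P}$, assign the same length $\varepsilon$ to every new edge $f_{(e,v)}$ with $(e,v)\in O$; then rescale the whole graph uniformly to restore the volume-1 condition. This yields a legitimate maximal blow-up $\Gamma'$ of $\Gamma$ in which edges in the same $\Isom(\Gamma)$-orbit have equal length.

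Third I would extend each $\phi\in\Isom(\Gamma)$ to a map $\phi'\colon\Gamma'\to\Gamma'$ by letting it act as $\phi$ (after uniform rescaling) on the subgraph coming from $\Gamma$, and sending the new edge $f_{(e,v)}$ linearly and isometrically onto $f_{\phi(e,v)}$. The key compatibility point is that when we insert $f_{(e,v)}$, the edges at $v$ get partitioned between its two new endpoints according to the two components of $\Gamma\setminus\{e,v\}$; this partition is canonically preserved by $\phi$, so $\phi'$ is well defined as a graph map. Because the orbit-constant choice of lengths forces $f_{(e,v)}$ and $f_{\phi(e,v)}$ to have the same length, $\phi'$ is an isometry. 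The assignment $\phi\mapsto\phi'$ is clearly a homomorphism and is injective since $\phi'$ restricts to $\phi$ on the edges coming from $\Gamma$, giving the desired embedding $\Isom(\Gamma)\hookrightarrow\Isom(\Gamma')$.

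The main technical point to verify carefully will be the combinatorial compatibility in the third step: one must check that the insertion data at each vertex-edge pair $(e,v)$ is transported correctly by $\phi$, and that the simultaneous blow-up at all pairs in $\mathcal{P}$ commutes with the $\Isom(\Gamma)$-action (here the order-independence from the proof of Lemma~\ref{uniqueblow-up} is what one would invoke). Once that is in place, the rest of the argument is just bookkeeping with edge lengths.
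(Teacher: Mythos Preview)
Your proposal is correct and follows essentially the same strategy as the paper: invoke the combinatorial uniqueness from Lemma~\ref{uniqueblow-up}, observe that $\Isom(\Gamma)$ permutes the vertex-edge blow-up data, and then choose an equivariant metric on the new edges so that the induced combinatorial automorphism becomes an isometry. The only substantive difference is the metric you put on $\Gamma'$: the paper takes all edges in each pairwise separating class to have equal length (this is the \emph{barycenter}, used explicitly in the subsequent Lemma~\ref{SepComponent}), whereas you assign an orbit-constant $\varepsilon$ to the new edges and rescale --- this suffices for the lemma as stated but does not produce the barycenter.
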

\begin{proof} By Lemma \ref{uniqueblow-up}, there is a unique way to obtain a maximally separating blow-up $\Gamma'$ of $\Gamma$.  Let $\Gamma'$ be the maximal blow-up of $\Gamma$ in which all the edges in a given pairwise separating edge class have the same length. We claim that any isometry $f:\Gamma\rightarrow \Gamma$ extends to an isometry $f':\Gamma'\rightarrow\Gamma'$.  First observe that $f$ preserves the following relations:\begin{enumerate}
\item If $e\cup e'$ separate then $f(e)\cup f(e')$ separate, and 
\item If $e\cup v$ separate then $f(e)\cup f(v)$ separate.  
\end{enumerate}
In a manner similar to the proof of Lemma \ref{Equivalence}, we can also define equivalence classes of separating edge-vertex pairs.   Since $f$ takes vertices to vertices and edges to edges hence we conclude that $f$ permutes equivalence classes of pairwise separating edge and separating edge-vertex pairs. Second, note that if $[e]$ denotes one pairwise separating edge class, and $[e']=f([e])$, then the sum of the lengths of all edges in $[e]$ is the same as the sum of the lengths of all edges in $[e']$.

Let $v_1,\ldots, v_r$ be the vertices belonging to a separating edge-vertex pair and let $e_1,\ldots, e_s$ be edges belonging to a separating edge or edge-vertex pair. Note that a single vertex may separate $\Gamma$ with more than one edge class.  Then $f$ permutes the components of $\Delta=(\Gamma\setminus (e_1\cup\cdots\cup e_s))||(v_1\cup\cdots\cup v_r)$.  The lengths of edges in $\Delta$ do not change when passing from $\Gamma$ to $\Gamma'$.  Now if $f(e)=e'$ then $l(e)=l(e')$ and if $f(v)=v'$ we must have that $v$ and $v'$ belong to the same number of separating edge-vertex pair classes.  Therefore the local topology at $v$ and $v'$ is the same, and after we  blow-up the unique edges in each separating edge-vertex pair given by Lemma \ref{uniqueblow-up}, the topology remains the same.  We define $f'$ to be the same combinatorial map on $\Gamma'\setminus\{\overl{e_1}',\ldots,\overl{e_p}'\}$ and there is a unique way to extend $f'$ over $\overl{e_1}',\ldots, \overl{e_p}'$.  The condition on lengths of edges in a pairwise separating edge class ensures $f'$ will be an isometry.    
\end{proof}

We call the maximal graph in which all separating edge classes have constant length the \emph{barycenter}. If $\Gamma$ has separating vertices, and $\mathcal{C}(\Gamma)=\coprod_{i=1}^n\Gamma_i$, then $\Gamma$ is a barycenter if $\Gamma_i$ is a barycenter for each $i$.  If $\Gamma$ is a barycenter without separating vertices, then $\G$ is the only graph in $D_{(\Gamma,\rho)}$ with this property.  On the other hand if $\G$ has separating vertices, $\G$ is no longer the unique barycenter in $D_{(\Gamma,\rho)}$ but any two barycenters $\G_1, \G_2\in D_{(\Gamma,\rho)}$ satisfy $\mathcal{C}(\G_1)\cong \mathcal{C}(\G_2)$. The following lemma says the isometry group of $\mathcal{C}(\Gamma)$ determines the stabilizer of the component $D_{(\Gamma,\rho)}$. For ease of notation, if $M$ is a $k\times k$ positive definite symmetric matrix, instead of writing $\text{stab}_{\GL_k(\Z)}(M)$ we will write $\Aut(M)$.

\begin{lemma}\label{SepComponent} If $(\G,\rho)\in\Phi^{-1}(M)$ is a barycenter, the stabilizer of the component $D_{(\Gamma,\rho)}$ is $\Isom(\mathcal{C}(\Gamma))$.
\end{lemma}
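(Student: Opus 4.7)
The plan is to prove the lemma by establishing the two containments $\Isom(\mathcal{C}(\Gamma)) \hookrightarrow \mathrm{stab}_{\Aut(M)}(D_{(\G,\rho)})$ and the reverse inclusion, where $\mathrm{stab}_{\Aut(M)}(D_{(\G,\rho)})$ denotes the elements of $\Aut(M)$ that preserve the component $D_{(\G,\rho)}$ setwise. Throughout, I will identify the change-of-marking action of $\GL_n(\Z)$ on $\mathcal{T}_n$ with the usual action on the quadratic form, via the $\GL_n(\Z)$-equivariance of $\Phi$.

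For the forward inclusion, let $f \in \Isom(\mathcal{C}(\Gamma)) = \prod_i \Isom(\Gamma_i')$ where $\mathcal{C}(\Gamma) = \coprod_i \Gamma_i'$, writing the splitting graph components of a barycenter. Pick any block-diagonal marking $\rho = \bigoplus_i \rho_i$ as in the preamble of Theorem~\ref{Fiber}, which realizes the decomposition $M = \bigoplus_i M_i$. Each isometry factor $f_i$ induces an element of $\Aut(M_i)$ by comparing $\rho_i$ and $f_i \circ \rho_i$; assembling these block-diagonally gives an element $A_f \in \Aut(M)$. The graph $A_f \cdot (\G, \rho)$ is obtained from $(\G, \rho)$ by rewedging the components via $f$, hence lies in $D_{(\G,\rho)}$ by Proposition~\ref{splittingblow-up} and the discussion following it. So $A_f$ stabilizes $D_{(\G,\rho)}$, and the assignment $f \mapsto A_f$ is injective because distinct isometries act differently on the block-diagonal markings.

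For the reverse inclusion, suppose $A \in \Aut(M)$ preserves $D_{(\G,\rho)}$. Then $A \cdot (\G, \rho) \in D_{(\G,\rho)}$, and we may flow this point within the component to a barycenter $(\G'', \rho'')$ (using the base of the quasi-fibration from Lemma~\ref{quasifibration} to equalize edge lengths in each pairwise-separating class, and moving wedge points via the fiber $W(\G_1', \dots, \G_k')$). By the remark preceding the statement of the lemma, any two barycenters in $D_{(\G,\rho)}$ have isomorphic splitting graphs, so $\mathcal{C}(\G'') \cong \mathcal{C}(\G)$. Pulling this identification back through the $A$-action realizes $A$ as a change-of-basis between block-diagonal markings on isomorphic disjoint unions of graphs; combinatorially this is an isomorphism of $\mathcal{C}(\G)$, which by construction must be length-preserving on each $\G_i'$, i.e. an isometry. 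This produces $f_A \in \Isom(\mathcal{C}(\G))$ with $A = A_{f_A}$, proving the desired inclusion.

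The main obstacle I anticipate is the last step: verifying that an arbitrary $A \in \Aut(M)$ preserving $D_{(\G,\rho)}$ is combinatorially induced, rather than merely a linear change of basis on homology that happens to agree on one graph. The leverage here comes from the barycenter's rigidity: because $\Isom_0$ of a non-circle component is trivial and the barycenter uniquely determines the constant length in each separating class, the only ambiguity in matching $A \cdot (\G, \rho)$ back to $(\G, \rho)$ lives in the permutation/wedging data, plus the $\Isom_0$-rotations absorbed into the definition of $W(\G_1', \dots, \G_k')$. Extending $\Isom(\mathcal{C}(\G))$-elements across the wedge structure uses Lemma~\ref{BaryIsom} componentwise, which is exactly why the lemma is phrased for barycenters.
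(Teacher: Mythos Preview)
Your overall two-inclusion strategy is sound, but there are two gaps. First, a minor one: you write $\Isom(\mathcal{C}(\Gamma)) = \prod_i \Isom(\Gamma_i')$, which is false whenever two of the $\Gamma_i'$ are isometric. The isometry group of a disjoint union is $\bigl(\prod_i \Isom(\Gamma_i')\bigr) \rtimes P$, with $P$ the group permuting isometric components. Your forward inclusion therefore only constructs block-diagonal elements of $\Aut(M)$ and misses the permutations of equal blocks.

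Second, and more seriously, your reverse inclusion does not close the gap you yourself flag. Since $\GL_n(\Z)$ acts only on the marking, $A\cdot(\Gamma,\rho)$ already has underlying graph $\Gamma$, so ``flowing to a barycenter'' is unnecessary, and the observation $\mathcal{C}(\Gamma'')\cong\mathcal{C}(\Gamma)$ is a tautology. What you actually need is that every point in the fiber $l^{-1}(p_{\mathrm{bary}}) = W(\Gamma_1,\dots,\Gamma_k)$ carries the \emph{same} block-diagonal homology marking $\rho=\bigoplus_i\rho_i$ (not merely an isomorphic splitting graph); then $(\Gamma,A\rho)$ lying in this fiber forces an isometry $h\colon\Gamma\to\Gamma'$ between two wedgings of the $\Gamma_i$ that induces $A$, and $h$ descends to $\Isom(\mathcal{C}(\Gamma))$. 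Your sketch gestures at this but does not isolate it. The paper takes a different and somewhat cleaner route: it first invokes the structural decomposition $\Aut(M)=\bigl(\prod_i\Aut(M_i)\bigr)\rtimes P$, then treats the two factors separately. For $\sigma\in P$ one checks directly (using Lemma~\ref{BaryIsom}) that $\sigma$ stabilizes the component iff $\Gamma_i\cong\Gamma_{\sigma(i)}$. For block-diagonal $f=(f_i)$, the key point is that each $\Gamma_i$ has no separating vertices, so its own fiber-component is a bare product of simplices with a \emph{unique} barycenter; hence $f_i$ preserves that component only if it fixes the barycenter, i.e.\ is an isometry of $\Gamma_i$. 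This reduction to the no-separating-vertex case is what your argument is missing.
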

\begin{proof} %Choose some point $(\Gamma,\rho)\in\Phi^{-1}(M)$, and denote by $Y_\Gamma$ the component containing $(\Gamma,\rho)$.  Without loss we may assume that $\Gamma=\Gamma_{\text{bary}}$ and that $\Gamma$ corresponds to a configuration .     Let $\mathcal{C}(\Gamma)=\G_1\cup\cdots \cup \G_k$. 

Recall that $M=M_1\oplus\cdots\oplus M_k$ is block diagonal, and $M_i$ is just the image period mapping for $\G_i$.  We have a decomposition $\Aut(M)=\left(\Aut(M_1)\times\cdots\times \Aut(M_k)\right)\rtimes P$, where $P$ permutes factors $M_i$ which are isomorphic. If $\sigma\in P$ is a permutation that sends $M_i$ to $M_{\sigma(i)}$, then $\sigma$ fixes the component containing $\G$ if and only if $\G_i$ is isomorphic to $\G_{\sigma(i)}$ for each $i$. For if $\G_i\cong\G_{\sigma(i)}$ for each $i$, then there exists a point in $D_{(\Gamma,\rho)}$ where $\G_i$ and $\G_{\sigma(i)}$ are wedged together at the same point, and there is therefore an isometry which realizes $\sigma$.  Conversely, if $\G_i\ncong \G_j$, then since $\G_i$ and $\G_j$ are maximal and do not have separating vertices, Lemma \ref{BaryIsom} implies that no separating edge collapse of $\G_i$ is isomorphic to a separating edge collapse of $\G_j$. Because $\tau$ takes isometric graphs to isometric graphs, it is not possible for $\tau$ to take any point of $D_{(\Gamma,\rho)}$ to another point of $D_{(\Gamma,\rho)}$.  We conclude that either $\tau \in P$ permutes the connected components of $\Phi^{-1}(M)$ or has a fixed point in $D_{(\Gamma,\rho)}$, and hence comes from an isometry of $\mathcal{C}(\Gamma)$. 

Now suppose $f\in \Aut(M_1)\times\cdots\times \Aut(M_k)$. In this case, $f$ acts on $\mathcal{C}(\Gamma)$ by preserving each of the components $\G_i$.  Let $f_i=f_{|_{\G_i}}$. Since each $\G_i$ is a maximal and has no separating vertices, either $f_i$ is an isometry of $\G_i$ or changes the marking. In the latter case, $f$ does not preserve the component $D_{(\Gamma,\rho)}$. In the former, $f$ is an isometry of $\mathcal{C}(\Gamma)$.
%
% If, on the other hand, $f_i$ is an isometry of $\G_i$ for each $i$, then $f$ preserves $X_\Gamma$ and preserves the configuration space of wedges corresponding to $\G$, by acting coordinate-wise. Thus, the stabilizer of $X_\Gamma$ is just $\Isom(S_1(\G))$.    
%As in the case of one component, we obtain that a point in $Y_\Gamma$ is determined by the lengths of separating edge classes and the configuration of wedges.  Hence $Y_\Gamma=X_\G$ and the lemma is proved. 
\end{proof}

Finally, we prove the third statement of Theorem~\ref{Fiber}. Lemma~\ref{quasifibration} states that the $D_{(\G,\rho)}$ is homotopy equivalent to $W(\G_1,\ldots,\G_k)$, where $(\G,\rho)$ is a barycenter, and $\mathcal{C}(\G)=\coprod_{i=1}^k\G_k$. By Theorem~\ref{ColoredCollins}, $W(\G_1,\ldots,\G_k)$ is homotopy equivalent to $W(R_{\ell_1},\ldots,R_{\ell_k})$ where $l_i$ is the rank of $\G_i$. The latter is aspherical and has fundamental group isomorphic to $\mathrm{P} \Sigma (\ell_1, \dots, \ell_k)$. 

To see that the inclusion $j:W(\G_1,\ldots,\G_k)\hookrightarrow\calT_n$ is $\pi_1$-injective, we observe that by choosing a maximal tree in each $\G_i$, the homotopy equivalence between $W(\G_1,\ldots,\G_k)$ and $W(R_{\ell_1},\ldots,R_{\ell_k})$ actually takes place in $\calT_n$. Lifting to the universal cover, we get a $\mathrm{P} \Sigma (\ell_1, \dots, \ell_k)$-equivariant homotopy equivalence $\tilde{j}$ between the universal cover of $W(\G_1,\ldots,\G_k)$ and some lift of $\mathrm{C}\Sigma(R_{\ell_1},\ldots, R_{\ell_k})$. Choosing some marked rose in the image of $\tilde{j}$, we obtain that $j_*\pi_1(W(\G_1,\ldots,\G_k))$ is actually a conjugate of $\mathrm{P} \Sigma (\ell_1, \dots, \ell_k)$.

 %%%%%%%%%%%%%%%%%%%%
 
\subsection{Discrete and indiscrete fibers}\label{Examples}
We now give some examples of fibers of the period mapping in order to demonstrate some of the variety of behavior that can occur.  Recall that a graph $\Gamma$ is called \emph{hyperelliptic} if it admits an isometric involution $\iota:\Gamma\rightarrow \Gamma$ such that $\iota_*$ acts as $-\text{Id}$ on $H_1(\G)$.  The different properties we will consider are discrete and indiscrete fibers, trivalent and non-trivalent graphs, and hyperelliptic vs. non-hyperelliptic graphs.  

For this section we first introduce some terminology.  A graph is \emph{3-connected} if any pair of vertices does not disconnect the graph, and \emph{3-edge connected} if no pair of edges separates the graph.   Let $v_1,v_2$ be a separating pair of vertices in a graph $\Gamma$.  A \emph{Whitney 2-move} at $v_1,v_2$ is defined as follows.  Let $\Delta$ be a component of $\Gamma\setminus\{v_1,v_2\}$. Form a new graph $\Gamma'$ from $\Gamma$ by taking all of the edges of $\Delta$ incident at $v_1$ and attaching them to $v_2$, and vice versa.  $\Gamma'$ is said to be obtained from $\Gamma$ by a Whitney 2-move.  

\begin{figure}[h]
\centering
\includegraphics[width=1in]{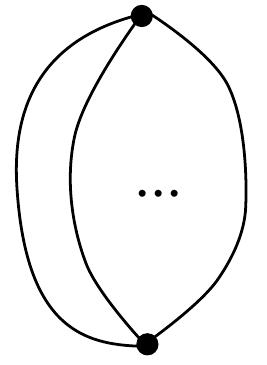}
\caption{A generalized theta graph}
\label{ThetaEx1}
\end{figure}

\begin{example}[Single point fiber]   The generalized theta graph (rank $\geq2$) pictured in figure \ref{ThetaEx1} is an example where the fiber has exactly one point for any choice of lengths of the edges. It is hyperelliptic, but if rank $ \geq3$ it is clearly not trivalent.  
\end{example}

\begin{figure}[h]
\centering
\includegraphics[width=3in]{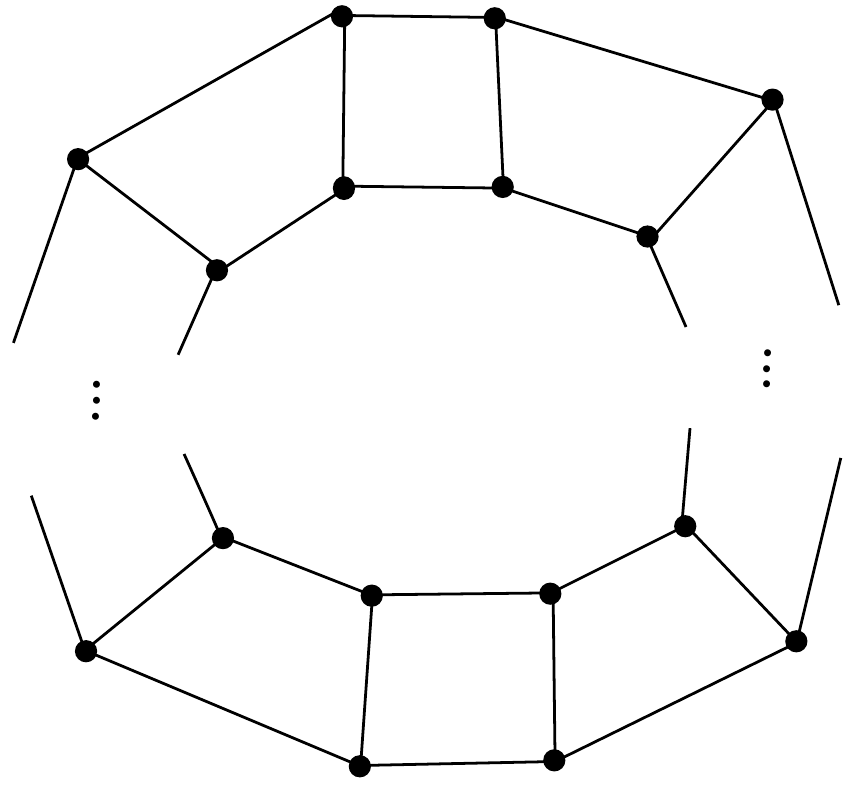}
\caption{A chain of squares.}
\label{CycleEx2}
\end{figure}

\begin{example}[Two point fiber] A chain of squares arranged in a circle (rank $\geq 4$) is trivalent and has no non-trivial Whitney 2-moves. Hence, by the theorem of Caporaso-Viviani \cite{CaVi11}, there is no other isomorphism type of graph in the same fiber.  Since there are no separating vertices or separating pairs of edges, Theorem \ref{Fiber} implies that each connected component of the fiber is a single point.  If all of the squares are congruent, every automorphism of the intersection pairing is induced by a graph automorphism, except multiplication by $\{\pm I\}$. Since the graph is not hyperelliptic, the fiber consists of exactly two points, which are exchanged by $\pm I\in \GL_n(\Z)$. 
\end{example}

\begin{figure*}[h]
\centering
\begin{subfigure}[t]{0.5\textwidth}
\centering
\includegraphics[width=1.75in]{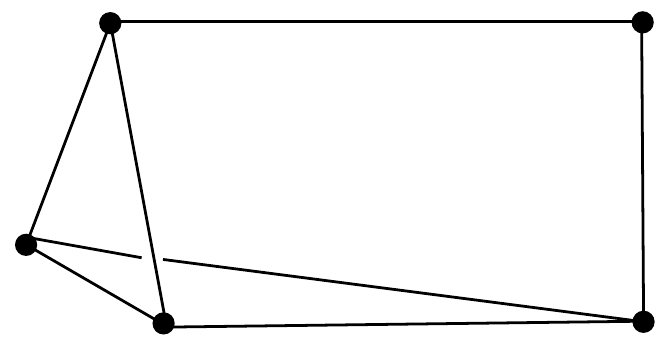}
\caption{The graph $A_1$.}
\label{A1Ex3}
\end{subfigure}
\quad
\begin{subfigure}[t]{0.5\textwidth}
\centering
\includegraphics[width=3in]{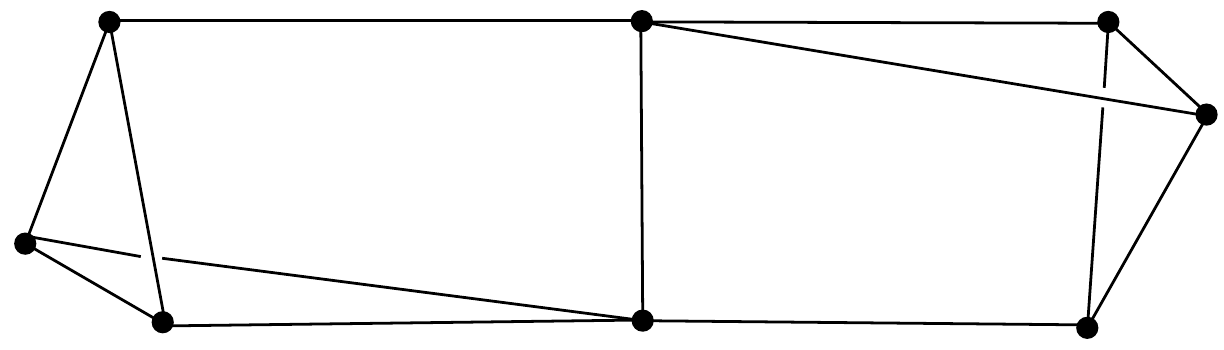}
\caption{A graph in $A_2$}
\label{A2Ex3}
\end{subfigure}
\caption{Examples of graphs in the families $A_n$.}
\end{figure*}

\begin{example}[Discrete fiber, different isomorphism types] The following families $A_n$ give graphs of rank $3n$ where fibers are discrete since there are no separating vertices or pairs of edges.  An element of $A_n$ is constructed by taking $n$ copies of $A_1$ and identifying them along the edge $e_0$.  The only non-trivial Whitney 2-moves come from inverting some number of $A_1$-components along the shared edge.  They are not-hyperelliptic, so there are at least $2^{n+1}$ homology-marked graphs in the fiber of $\Phi$. When we pass to $\mathcal{G}_{3n}$, the isomorphism type is entirely determined by how many $A_1$ pieces are up or down, thus there are exactly $n+1$ points in the fiber of $\overl{\Phi}$.
\end{example}

%Following the proposition, we know that we cannot have both hyperelliptic and non-hyperelliptic components. If $\Phi^{-1}(M)$ has hyperelliptic representatives, then $\{\pm I\}$ preserves each one.  If not, connected components come in pairs--the same graphs but with opposite orientations on all of the cycles--which are exchanged by $\{\pm I\}$. 
Since we assume our graphs do not have separating edges, in $\calT_n$ \emph{most} graphs do not have separating vertices, in the sense that the open dense subset of trivalent graphs do not have separating vertices.  It follows from Theorem \ref{Fiber} that for these graphs the fiber is particularly simple: it is a union of contractible simplicial complexes.  Our final family of examples is of this type.

\begin{figure}[h]
\centering
\includegraphics[width=2in]{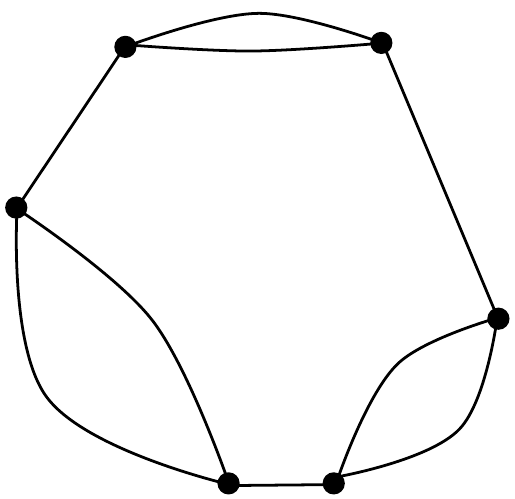}
\caption{Doubled-edge polygon with $n=3$.}
\label{PolygonEx4}
\end{figure}

\begin{example}[Indiscrete, contractible fibers] Consider a polygon with $2n$ sides, and replace every other edge by a double edge.  This graph is trivalent, has rank $n+1$ and a single separating edge class of size $n$.  Any pair of vertices separate, so there are always non-trivial Whitney 2-moves, and none of these are induced by automorphisms.  The number of components is therefore determined by the number of Whitney 2-moves and the lengths of the double edges.  In the perfectly symmetric case, the number of components is $(n-1)!$, each of which is an $n$-simplex.  Note that by gluing these polygons together one obtains components which are more products of simplices.  
\end{example}

We close this subsection with a observation about the hyperelliptic locus. One might think it possible for some fiber to contain some components which intersect the hyperelliptic locus and some of which do not.  The following proposition says this is not the case.  

\begin{proposition}If some connected component of $\Phi^{-1}(M)$ contains a hyperelliptic graph, then every component does.  
\end{proposition}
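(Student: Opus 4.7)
The plan is to characterize hyperelliptic marked graphs in $\calT_n$ as the fixed points of the involution induced by $-I \in \GL_n(\Z)$. Specifically, $(\Gamma, -\rho) = (\Gamma, \rho)$ in $\calT_n$ precisely when $\Gamma$ admits an isometry acting as $-I$ on $H_1(\Gamma)$, i.e.\ is hyperelliptic. Since $(-I)^t M(-I) = M$ for every $M$, the element $-I$ lies in $\mathrm{stab}_{\GL_n(\Z)}(M)$ and permutes the components of $\Phi^{-1}(M)$; the strategy is to use the structure of Theorem~\ref{Fiber} to build hyperelliptic graphs in every component.

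Let $(\Gamma_1, \rho_1) \in D_1$ be hyperelliptic with involution $\iota_1$. First I would observe that $\iota_1$ descends to isometric involutions of each piece $\Gamma_{1,i}$ of $\mathcal{C}(\Gamma_1)$, each acting as $-I$ on the corresponding summand $H_1(\Gamma_{1,i})$, since the $-I$ action on $H_1(\Gamma_1) = \bigoplus H_1(\Gamma_{1,i})$ is block-diagonal with respect to this splitting. Hence every piece of $\mathcal{C}(\Gamma_1)$ is hyperelliptic. For a different component $D_2$ containing $(\Gamma_2, \rho_2)$, the pieces of $\mathcal{C}(\Gamma_2)$ realize the same block decomposition of $M$ as those of $\mathcal{C}(\Gamma_1)$. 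The key claim, and the main technical obstacle, is that each piece $\Gamma_{2,j}$ of $\mathcal{C}(\Gamma_2)$ is also hyperelliptic. I would approach this by showing that hyperellipticity of a graph without separating vertices is preserved under the Whitney 2-moves that relate distinct realizations of a fixed period matrix: a hyperelliptic involution must permute the separating vertex pairs of the graph, and a careful case analysis of its action on the component being flipped produces a compatible isometric involution of the output graph which still acts as $-I$ on homology.

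Granting that each $\Gamma_{2,j}$ is hyperelliptic with involution $\iota_j$, I would finish by constructing a hyperelliptic graph in $D_2$. Because $\iota_j$ acts as $-I$ on $H_1(\Gamma_{2,j})$, the quotient $\Gamma_{2,j}/\iota_j$ has trivial rational first homology and is therefore a tree, which forces $\iota_j$ to have at least one fixed vertex in $\Gamma_{2,j}$. Using the wedge-configuration description of $D_2$ developed in Section~\ref{Wedge}, which permits the wedge points between pieces to be varied freely within a component, I would position each wedge at such a fixed vertex of the corresponding $\iota_j$. The pieces' involutions then assemble into a global isometric involution of the wedged graph acting as $-I$ on $H_1$, yielding the desired hyperelliptic representative of $D_2$.
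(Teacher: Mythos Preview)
Your proposal is correct, and its core technical step---that Whitney 2-moves preserve hyperellipticity, proved by a case analysis of how the involution interacts with the separating vertex pair---is exactly what the paper does. The paper, however, takes a more direct route: rather than decomposing into pieces of $\mathcal{C}(\Gamma)$ and reassembling, it works with the whole graph throughout. Invoking Caporaso--Viviani together with Theorem~\ref{Fiber}, any two graphs in $\Phi^{-1}(M)$ are related by Whitney 2-moves and separating-edge collapses; since the latter stay within a component, it suffices to show a single Whitney 2-move on a hyperelliptic $G$ yields a hyperelliptic graph. The paper then organizes the case analysis using the fact that $G/\iota$ is a tree $T$ with marked vertices (the fixed points of $\iota$), and $G$ is the double of $T$ along those marks; the cases are according to whether $v_1$, $v_2$ are marked or not, and whether they lie on the same or opposite sheets. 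Your scaffolding---pass to $\mathcal{C}(\Gamma_1)$, argue each piece is hyperelliptic, transport hyperellipticity of pieces to $\mathcal{C}(\Gamma_2)$ via Whitney moves, then wedge at $\iota_j$-fixed points---works, but the decompose/reassemble steps are extra overhead the paper avoids. Two small points to tighten: you should note that $\iota_1$ cannot permute the pieces of $\mathcal{C}(\Gamma_1)$ nontrivially (a swap would give an off-diagonal block action on $H_1$, not $-I$), and the fixed point of $\iota_j$ may be an edge-midpoint rather than a vertex, though this is harmless for wedging.
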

\begin{proof} Suppose $(G_1,\rho_1),(G_2,\rho_2)\in\Phi^{-1}(M)$ where $G_2$ is hyperelliptic and $G_2$ is not.  If $(G_1,\rho_1)$ and $(G_2,\rho_2)$ lie in the same connected component, we are done. Otherwise, by Caporaso-Viviani and Theorem \ref{Fiber}, we know that $G_1$ and $G_2$ are related by Whitney 2-moves and separating edge collapses. Observe that Whitney 2-moves preserve separating edge pairs, so it suffices to show that any Whitney 2-move applied to a hyperelliptic graph is hyperelliptic.  

To see this, suppose $G$ is hyperelliptic with involution $\iota:G\rightarrow G$.  Then $G/\iota=T$ is a tree with marked vertices.  $G$ is obtained from $T$ by doubling $T$ along the marked vertices.  We denote by $l$ the axis of the involution $\iota$ on $G$. Suppose $v_1$ and $v_2$ are the two vertices of $G$ along which a Whitney 2-move is applied.  Note that neither $v_1$ nor $v_2$ is a leaf of $T$.  We divide the proof into various cases.

\textbf{Case 1:} $v_1$ and $v_2$ are both marked. Each component of $G\setminus\{v_1,v_2\}$ is hyperelliptic.  It follows that a Whitney 2-move along these vertices does nothing to $G$.  

\textbf{Case 2:} $v_1$ is marked and $v_2$ is unmarked. Without loss of generality, $v_1$ and $v_2$ lie in $T$.  Let $\alpha$ denote the path in $T$ between $v_1$ and $v_2$.  If there exists a vertex on $\alpha$ other than $v_1, v_2$ then $v_1\cup v_2$ does not separate $G$.  Else, $\alpha$ is a single edge and the Whitney 2-move does nothing.  

\textbf{Case 3:} $v_1$ and $v_2$ both unmarked.
\begin{enumerate}[(i)]
\item $v_1=\iota(v_2)$.  Every component of $G\setminus\{v_1,v_2\}$ is hyperelliptic, as in Case 1. 
\item $v_1, v_2\in T$ or $v_1, v_2\in \iota(T)$.  This is analogous to Case 2 above.  
\item $v_1\in T$, $v_2\in \iota(T)$, and $v_1\neq\iota(v_2)$.  Let $v_2'=\iota(v_2)$ and denote by $\alpha$ the path in $T$ between $v_1$ and $v_2'$.  Since $v_1\cup v_2$ separate, as above we conclude $\alpha$ is a single edge $e_0$.  Since $v_1$ and $v_2$ are each connected to some marked vertex, $G\setminus\{v_1,v_2\}$ consists of exactly 2 components. The Whitney 2-move along these vertices connects and endpoint of $e_0$ with an endpoint of $\iota(e_0)$, creating a single edge $E$ whose length is $l(e_0)+l(\iota(e_0))=2l(e_0)$.  Opposite $E$ is a vertex $v$, such that $v\cup E$ separates.  Blowing up an edge at $v$ creates a separating edge of length $l(e_0)$ gives hyperelliptic graph isometric to $G$.  
\end{enumerate}
This exhausts all cases and concludes the proof of the proposition.
\end{proof}

%%%%%%%%%%%%%%%%%%%%%%%%%%%%%%%%%%%%%%%%
%%%%%%%%%%%%%%%%%%%%%%%%%%%%%%%%%%%%%%%%

\section{Hyperelliptic automorphisms of free groups}

In this section, we define the hyperelliptic automorphism group $\HOut$ of a free group. We then find a finite generating set $\mathcal{S}$ for this group. We proceed, in Section~\ref{secTor}, to use $\mathcal{S}$ to obtain the generating set for the intersection $\STn$ of $\HOut$ with the Torelli subgroup of $\Outfn$ that is asserted by Theorem~\ref{hyptor}.

\subsection{The hyperelliptic involution of a free group}

Recall that, in analogy with mapping class groups, we define a \emph{hyperelliptic involution} of $F_n$ to be a (perhaps outer) automorphism of $F_n$ that has order 2 and induces $-I$ on $H_1(F_n, \Z) \cong \Z^n$. One example of such an involution is the automorphism we denote by $\iota \in \Autfn$, which inverts each member of the free basis $X$ of $F_n$.

Glover--Jensen \cite[Proposition 2.4]{GJ00} showed that $\Autfn$ has a unique hyperelliptic involution up to conjugacy. The same is true of such involutions in $\Outfn$, as we show in Lemma~\ref{hypuniq}, by proving that passing from $\Autfn$ to $\Outfn$ introduces no new hyperelliptic involutions. We thus refer to $\iota$ and $[\iota]$ as `the' hyperelliptic involutions of $F_n$, as they are essentially unique. 

\begin{lemma}\label{hypuniq}Any two hyperelliptic involutions in $\Outfn$ are conjugate.
\end{lemma}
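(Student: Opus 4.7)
The strategy is to reduce to Glover--Jensen's uniqueness theorem for hyperelliptic involutions in $\Autfn$ (recalled just above). Given hyperelliptic involutions $[\theta_1], [\theta_2] \in \Outfn$, the plan is to produce representatives $\widetilde{\theta}_i \in \Autfn$ with $\widetilde{\theta}_i^2 = \mathrm{id}$. Each such $\widetilde{\theta}_i$ is then itself a hyperelliptic involution in $\Autfn$, since its action on $H_1(F_n,\Z)$ agrees with that of $[\theta_i]$, namely $-I$. Then \cite[Proposition 2.4]{GJ00} supplies $\alpha \in \Autfn$ with $\alpha \widetilde{\theta}_1 \alpha^{-1} = \widetilde{\theta}_2$, and projecting to $\Outfn$ yields the required conjugacy of $[\theta_1]$ and $[\theta_2]$.

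The key input is therefore the existence of an order-$2$ lift. Given $[\theta] \in \Outfn$ of order $2$, I would invoke the Nielsen realization theorem for $\Outfn$, due independently to Culler, Khramtsov, and Zimmermann: every finite subgroup of $\Outfn$ is realized as a group of isometries of some metric graph in $CV_n$, and hence lifts isomorphically to a finite subgroup of $\Autfn$ of the same order. Applied to $\langle[\theta]\rangle \cong \Z/2\Z$, this immediately produces the desired order-$2$ representative $\widetilde{\theta} \in \Autfn$, which automatically satisfies $\widetilde{\theta}_* = -I$ on $H_1$ since this property depends only on the outer class.

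The main obstacle, which the geometric lifting bypasses, is the following potentially nontrivial equation encountered in a direct approach. Any lift $\theta \in \Autfn$ of $[\theta]$ satisfies $\theta^2 = c_w$ for some $w \in F_n$, where $c_w$ denotes conjugation by $w$. Applying $\theta$ to both sides and using that $Z(F_n) = 1$ for $n \geq 2$ forces $\theta(w) = w$, and $\theta_* = -I$ on homology further gives $[w] = 0 \in H_1(F_n,\Z)$. Replacing $\theta$ by $c_u\theta$ one computes $(c_u\theta)^2 = c_{u\,\theta(u)\,w}$, so finding an order-$2$ lift reduces to solving the twisted equation $u\,\theta(u) = w^{-1}$ in $F_n$. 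This is a cohomological problem (the vanishing of the relevant class in $H^2(\Z/2\Z, F_n)$) that I would sidestep via Nielsen realization rather than attempt by hand; once the order-$2$ lifts are in place, the remainder of the proof is a mechanical application of Glover--Jensen.
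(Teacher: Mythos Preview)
Your approach matches the paper's: realize the involution on a graph via Culler's theorem, lift it to an order-$2$ element of $\Autfn$, and then invoke Glover--Jensen. The one genuine gap is in your lifting step. It is not true that Nielsen realization automatically furnishes a lift to $\Autfn$; a lift exists precisely when the realized graph action has a global fixed point, and there are order-$2$ subgroups of $\Outfn$ for which every realization is free. For example, the antipodal involution of the $3$-cube graph (rank $5$, all vertices trivalent) acts freely, and the resulting $\Z/2\Z \leq \Out(F_5)$ does not lift: the associated extension of $\Z/2\Z$ by $\text{Inn}(F_5)\cong F_5$ is isomorphic to $\pi_1(K_4)\cong F_3$, which is torsion-free. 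So the clause ``and hence lifts isomorphically to a finite subgroup of $\Autfn$'' is false as a general statement about realization.

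What makes the lift exist here is exactly the hyperelliptic hypothesis. If the realized involution on $\Delta$ acted freely, then $\Delta \to \Delta/\iota$ would be an honest double cover of graphs, and the transfer would exhibit a nontrivial $(+1)$-eigenspace in $H_1(\Delta;\Z)$, contradicting $\iota_* = -I$. Hence the involution has a fixed point (a vertex, or the midpoint of an invariant edge after subdivision), and taking this as basepoint gives the required order-$2$ lift. The paper supplies exactly this fixed-point step; its phrasing (``any involution of a connected graph fixes either a vertex or an (undirected) edge'') is, taken literally, also too broad, but the mechanism it points to is the correct one and is what your argument is missing.
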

\begin{proof}Let $[\rho] \in \Outfn$. We will show that $[\rho]$ is conjugate to the outer automorphism class of $\iota$.

In the sense of Culler \cite{Cul84}, we may realize $[\rho]$ as an order 2 automorphism of a finite, connected graph $\Delta$. Since any involution of a connected graph fixes either a vertex or an (undirected) edge, we may take as a basepoint of $\Delta$ a fixed point of this involution (subdividing a fixed edge, if necessary). Thus, the realization of $[\rho]$ provides a lift to an involution in $\Autfn$.

This lift of $[\rho]$ is conjugate to $\iota$, by Glover--Jensen's uniqueness theorem, with the conjugacy descending to $\Outfn$.
\end{proof}

Pushing the mapping class group analogy further, we now turn our attention to the automorphisms of $F_n$ that centralize $\iota$ or $[\iota]$, to obtain free group versions of hyperelliptic mapping classes.

\subsection{Hyperelliptic and palindromic automorphisms of free groups}

We define the subgroup of $\Outfn$ that centralizes $[\iota]$ to be the \emph{hyperelliptic automorphism group of $F_n$}, and denote it by $\HOut$. The remainder of this section is dedicated to obtaining a finite generating set for this group. As noted in Section 1, the group $\HOut$ has non-trivial intersection with the Torelli subgroup $\mathcal{T}_n$ of $\Outfn$. We denote this intersection by $\STn$, and refer to it as the \emph{hyperelliptic Torelli group}.

A group related to $\HOut$ is the \emph{palindromic automorphism group} $\pia_n \leq \Autfn$, which is defined to be the centralizer of the involution $\iota$. The group $\pia_n$ is so named because an automorphism $\alpha \in \pia_n$ must carry each $x \in X$ to a \emph{palindrome}; that is, $\alpha(x)$ must equal $\alpha(x)^{\mathrm{rev}}$, the result of writing the reduced word $\alpha(x)$ on $X$ in reverse.  It is elementary to show that $\pia_n$ contains no inner automorphisms, and so $\pia_n$ injects into $\HOut$ under the usual map $\Autfn \to \Outfn$. It is, however, a proper inclusion: note that the automorphism $\tau \in \HOut$ mapping each $x_i \in X$ to $x_1 x_i$ and fixing $x_1$ is not in its image. 

A finite generating set for $\pia_n$ was found by Collins \cite{Col95}, consisting of all permutations and inversions of the letter set $X$, and \emph{elementary palindromic automorphisms} $P_{ij}$ ($1 \leq i \neq j \leq n$), each of which maps $x_i$ to $x_j x_i x_j$ and fixes all other letters in $X$. Notationally, we shall denote by $\sigma_i$ the 2-cycle swapping $x_i$ and $x_{i+1}$, and by $\iota_j$ the inversion mapping $x_j$ to ${x_j}^{-1}$ that fixes the other letters of $X$. As we shall prove in Theorem~\ref{hgens}, in order to generate $\HOut$, it suffices to add the single element $[\tau]$ to the image of this set in $\HOut$. 

One strong motivation in particular for studying the group $\pia_n$ is that it virtually surjects onto the \emph{principal level 2 congruence subgroup} of $\GLn$, which we denote by $\Gamma_n[2]$, under the usual map $\Autfn \to \GLn$. The group $\Gamma_n[2]$ is the kernel of the mod 2 matrix entry reduction map $\GLn \to \GLnt$, and is a first example of the so-called \emph{congruence subgroups} of $\GLn$, which control the finite-index subgroup structure of $\GLn$ when $n \geq 3$ (cf. \cite{BLS64} and \cite{Men65}).

The (virtual) surjection $\pia_n \to \Gamma_n[2]$ has a large kernel, the \emph{palindromic Torelli group}, denoted $\ptor_n$. The second author found a generating set for $\ptor_n$ \cite{Ful15}, consisting of two types of generator, which we now describe. Let $Y$ be a palindromic free basis for $F_n$ (that is, the image of $X$ under some $\alpha \in \pia_n$). A \emph{doubled commutator transvection} maps some $c \in Y$ to $[a,b] \cdot c \cdot [a,b]^{\mathrm{rev}}$ for some $a, b \in Y \setminus \{c \}$, fixing the other members of $Y$. A \emph{separating $\pi$-twist} maps some $a, b, c \in Y$ to $s^{\mathrm{rev}} a s$, $s^{-1} b {(s^{\mathrm{rev}})}^{-1}$, and $s^{\mathrm{rev}} c s$, respectively (where $s = a^{-1}b^{-1}c^{-1}abc \in F_n$), and fixes the other members of $Y$. The following is proved.

\begin{theorem}[Fullarton, \cite{Ful15}] \label{paltor}For $n \geq 3$, the palindromic Torelli group $\ptor_n$ is generated by the set of doubled commutator transvections and separating $\pi$-twists. \end{theorem}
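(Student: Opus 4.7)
The plan is to prove Theorem~\ref{paltor} by adapting the topological method used by Day--Putman to find generators for the standard Torelli group $\mathcal{I}_n$. I would build a simplicial complex $\mathfrak{PB}_n$ whose vertices are tuples of elements of $F_n$ that extend to palindromic bases (``partial palindromic bases''), with simplices recording simultaneous compatibility of extensions. The palindromic automorphism group $\pia_n$ acts on $\mathfrak{PB}_n$, and the action restricts to $\ptor_n$, with vertex stabilizers that can be understood inductively on $n$.

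First I would verify that the image of the surjection $\pia_n \to \GL_n(\Z)$ is contained in the principal level $2$ congruence subgroup $\Gamma_n[2]$, since the abelianization of any palindromic word $w$ in $\{x_i^{\pm 1}\}$ satisfies a parity condition (odd in the ``leading'' letter, even in the others). This identifies $\ptor_n$ as the kernel of a well-understood (virtual) surjection $\pia_n \to \Gamma_n[2]$, and simultaneously shows that doubled commutator transvections and separating $\pi$-twists do lie in $\ptor_n$: both are manifestly palindromic by construction, and each acts trivially on $H_1(F_n;\Z)$ because commutators and elements like $s = a^{-1}b^{-1}c^{-1}abc$ abelianize to zero.

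Next I would establish that $\mathfrak{PB}_n$ is simply connected for $n \geq 3$, by modeling the argument on connectivity results for the usual complex of partial bases: inductively on rank via link arguments, reducing simple-connectivity to the claim that any two palindromic bases are joined by a sequence of palindromic Nielsen-type elementary moves whose higher relations factor through $\mathfrak{PB}_n$. With simple-connectivity in hand, a standard action-on-complex argument (Armstrong's lemma, or Brown's presentation method) presents $\ptor_n$ via vertex stabilizers together with loops built from edges; each edge corresponds to a palindromic elementary move, and the 2-cell relations, upon careful identification, would yield exactly the two claimed families of generators: doubled commutator transvections from the basic commutator 2-cells and separating $\pi$-twists from the 2-cells enforcing the palindromic constraint on conjugating elementary moves.

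The principal obstacle will be proving that $\mathfrak{PB}_n$ is simply connected: the palindromic requirement destroys several of the elementary link-reductions available in Day--Putman's setting, so one must interpolate between palindromic bases via auxiliary (possibly non-palindromic) intermediate configurations and verify that these contribute nothing to $\pi_1$ after careful accounting. A secondary technical point is certifying that \emph{only} the two claimed generator types appear among the 2-cell face relations; the separating $\pi$-twists are subtle in that they arise from a length-six word in the free group, so one must identify the appropriate higher cells of $\mathfrak{PB}_n$ that enforce exactly this relation. As a fallback, Collins' explicit finite presentation of $\pia_n$ together with a Reidemeister--Schreier rewrite for the kernel of $\pia_n \to \Gamma_n[2]$ provides an algebraic (if less transparent) route to the same conclusion.
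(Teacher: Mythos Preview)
The paper does not contain a proof of this theorem. Theorem~\ref{paltor} is quoted from \cite{Ful15} as background input: the paper states it, attributes it, and then \emph{uses} it (together with the presentation of $\Gamma_n[2]$ in Corollary~\ref{LevelGen}) to generate $\STn$ in the proof of Theorem~\ref{hyptor}. There is therefore no argument here for you to compare against.

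That said, your sketch is broadly in the spirit of the actual proof in \cite{Ful15}: one does build a complex of partial palindromic bases on which $\pia_n$ acts, establish high connectivity, and extract normal generators for the kernel $\ptor_n$ from the combinatorics of the action. Your identification of the main obstacle is also accurate: the palindromic constraint genuinely complicates the link/connectivity arguments borrowed from Day--Putman, and this is where most of the work in \cite{Ful15} lies. One point to flag: you should not expect the two generator types to fall out cleanly as ``2-cell face relations'' of the complex. In practice the action-on-complex machinery produces a large normal generating set coming from stabilizers and edge/face relations, and a substantial algebraic reduction is then needed to whittle these down to doubled commutator transvections and separating $\pi$-twists. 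Your fallback suggestion (Reidemeister--Schreier from Collins' presentation of $\pia_n$) is closer to how that reduction is actually organized than your primary description suggests.
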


The definition\label{geomob} of a separating $\pi$-twist may seem clumsy; however, it has a geometric description that is far neater. Taking as a free basis the set of oriented loops seen in Figure~\ref{surface}, the Dehn twist about the separating curve $\chi$ acts upon the depicted surface's free fundamental group as a separating $\pi$-twist. To obtain any other separating $\pi$-twist, we must simply conjugate the one given by $\chi$ by an appropriate member of $\pia_n$. This geometric observation plays an important role in our proof of Theorem~\ref{hyptor}, as it indicates how to express a separating $\pi$-twist as a product of doubled commutator transvections, as we shall see in Section~\ref{secTor}.

Via the surjection onto $\Gamma_n[2]$, the following corollary of Theorem~\ref{paltor} yields a finite presentation for this principal congruence subgroup. The group $\Gamma_n[2]$ forms part of a short exact sequence involving the quotient of $\HOut$ by $\STn$, and it is this presentation that enables us to find generators for $\STn$. Notationally, we denote the images of $\sigma_i$, $\iota_i$ and $P_{ij}$ in $\GLn$ by $s_i$, $J_i$ and $Q_{ji}$, respectively. 

\begin{corollary}[Fullarton \cite{Ful15}, Kobayashi \cite{Kob15}, Margalit--Putman \cite{Mar13}]\label{LevelGen} For $n \geq 3$, the group $\Gamma_n[2]$ is generated by the set $\{ J_i, Q_{ji} \mid 1 \leq i \neq j \leq n \}$, subject to the defining relators:

\begin{multicols}{2}
\begin{enumerate}
    \item ${J_i}^2$
    \item $[J_i,J_j]$
    \item $(J_iQ_{ij})^2$
    \item $(J_jQ_{ij})^2$
    \item $[J_i, Q_{jk}]$
    \item $[Q_{ki}, Q_{kj}]$
    \item $[Q_{ji},Q_{ki}]$
    \item $[Q_{ij},Q_{kl}]$
    \item $[Q_{kj},Q_{ji}]{Q_{ki}}^{-2}$
    \item $(Q_{ij}{Q_{ik}}^{-1}Q_{ki}Q_{ji}Q_{jk}{Q_{kj}}^{-1})^2$,
\end{enumerate}
\end{multicols} where $1 \leq i,j,k,l \leq n$ are pairwise distinct.

 \end{corollary}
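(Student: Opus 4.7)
The plan is to derive this presentation from Theorem~\ref{paltor} via the short exact sequence
$$1 \to \ptor_n \to \pia_n^0 \to \Gamma_n[2] \to 1,$$
where $\pia_n^0$ denotes the preimage of $\Gamma_n[2]$ in $\pia_n$ under the abelianization map $\Autfn \to \GLn$. The kernel of the right-hand surjection is $\pia_n \cap \calI_n = \ptor_n$, and $\pia_n^0$ has finite index in $\pia_n$, the finite quotient acting on $X$ as a group of signed permutations.

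First I would verify that $\{J_i, Q_{ji}\}$ generates $\Gamma_n[2]$. The images under abelianization of Collins' generators $P_{ij}, \iota_i, \sigma_i$ for $\pia_n$ are the matrices $Q_{ji}, J_i$, and permutation matrices, respectively. The permutation matrices are not in $\Gamma_n[2]$, so after restricting to $\pia_n^0$ and projecting, only the $J_i$ and $Q_{ji}$ survive, yielding the asserted generating set.

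Next I would check that each relator (1)--(10) holds in $\Gamma_n[2]$ by direct matrix computation. This is routine: (1)--(2) encode that the $J_i$ are commuting involutions; (3)--(5) describe how sign changes interact with transvections; (6)--(8) are commutation relations for disjoint and parallel transvections; and (9)--(10) are the level-$2$ analogues of the Steinberg commutator relations.

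The main obstacle is completeness. Starting from a finite presentation of $\pia_n$ (which can be assembled from Collins' generators), apply Reidemeister--Schreier to obtain a finite presentation of $\pia_n^0$, and then take the quotient by the normal closure of the generating set for $\ptor_n$ provided by Theorem~\ref{paltor} (the doubled commutator transvections and separating $\pi$-twists). This yields a finite presentation of $\Gamma_n[2]$, and the technical heart of the argument is a sequence of Tietze transformations reducing it to the ten listed relators. The bookkeeping needed to identify each Reidemeister--Schreier relator, and the image in $\GLn$ of each $\ptor_n$-generator, as a consequence of relators (1)--(10) — particularly the Steinberg-type relators (9) and (10) — is the real substance of the proof.
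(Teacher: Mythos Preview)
The paper does not give its own proof of this statement: it is recorded as a corollary of Theorem~\ref{paltor} and attributed to the cited references, with the only indication of method being the sentence ``Via the surjection onto $\Gamma_n[2]$, the following corollary of Theorem~\ref{paltor} yields a finite presentation for this principal congruence subgroup.'' Your proposal expands exactly this sketch --- pass from a presentation of (a finite-index subgroup of) $\pia_n$ to one for $\Gamma_n[2]$ by killing the normal generators of $\ptor_n$ supplied by Theorem~\ref{paltor} --- so it matches the paper's implied approach; the Tietze-transformation bookkeeping you flag as the ``real substance'' is precisely what is carried out in the cited sources rather than in this paper.
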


\subsection{Generating the group of hyperelliptic automorphisms}

Denote by $\mathrm{HAut}(F_n)$ the pre-image of $\mathrm{HOut}(F_n)$ inside $\Autfn$. This group $\mathrm{HAut}(F_n)$ of automorphisms is related in a natural way to the split short exact sequence
\[ 1 \longrightarrow F_n \longrightarrow F_n \rtimes \Z / 2 \longrightarrow \Z / 2 \longrightarrow 1 ,\] where the $\Z / 2$ action is by $\iota$. We explain and exploit this relationship in the proof of the following theorem, which is an $\Outfn$ version of a result of Jensen--McCammond--Meier \cite[Lemma 6.1]{JMM07}.

\begin{theorem}The group $\mathrm{HOut}(F_n)$ is generated by the images in $\Out (F_n)$ of $\pia_n$ and the automorphism $\tau$. \label{hgens}
\end{theorem}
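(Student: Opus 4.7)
Given $[\alpha] \in \HOut$, choose any lift $\alpha \in \Aut(F_n)$. Since $[\alpha]$ centralises $[\iota]$ in $\Out(F_n)$, there exists $w \in F_n$ with $\alpha\iota\alpha^{-1} = c_w \cdot \iota$, where $c_w$ denotes conjugation by $w$. Imposing that this automorphism have order two forces $w \cdot \iota(w) = 1$, and unpacking this condition on the basis $X$ shows that, as a reduced word, $w$ must be a palindrome. My plan is to reduce this palindromic cocycle $w$ to the empty word by a sequence of allowed moves, at which point the resulting lift of $[\alpha]$ lies in $\pia_n$.

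Before beginning the reduction I would tabulate how the cocycle $w$ transforms under the elementary moves available. Changing the lift from $\alpha$ to $c_v \alpha$ replaces $w$ by $v \cdot w \cdot \iota(v)^{-1}$; left-multiplying $\alpha$ by $\pi \in \pia_n$ sends $w$ to $\pi(w)$; right-multiplication by $\pi$ leaves $w$ unchanged. A direct computation gives $\tau \iota \tau^{-1} = c_{x_1} \iota$, so left-multiplication by $\tau$ sends $w$ to $\tau(w) \cdot x_1$. Conjugating $\tau$ by the transposition swapping $x_1$ and $x_i$ (an element of $\pia_n$) produces an automorphism $\tau_i$ with cocycle $x_i$, and $\tau_i^{-1}$ has cocycle $x_i^{-1}$.

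The core of the argument is a parity dichotomy on the reduced length of $w$. If $|w| = 2m$, palindromicity forces $w = a_1\cdots a_m a_m\cdots a_1$, and setting $v = a_1\cdots a_m$ gives $w = v \cdot \iota(v)^{-1}$, so $w$ is a coboundary; then changing the lift by $c_{v^{-1}}$ trivialises the cocycle and $[\alpha]$ lies in the image of $\pia_n$. If $|w| = 2m+1$, the palindrome has a well-defined middle letter $x_i^\epsilon$, and we may write $w = u \cdot x_i^\epsilon \cdot \iota(u)^{-1}$ with $u = a_1\cdots a_m$. Changing the lift by $c_{u^{-1}}$ telescopes the flanking words and leaves cocycle $x_i^\epsilon$; left-multiplying by $\tau_i^{-\epsilon}$ then kills the cocycle, because $\tau_i^{-\epsilon}$ fixes $x_i$ and contributes $x_i^{-\epsilon}$. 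In either case $[\alpha]$ is exhibited as a product of (the image of) at most one $\tau_i^{\pm 1}$ with an element of $\pia_n$, and since each $\tau_i$ is a $\pia_n$-conjugate of $\tau$, we obtain $[\alpha] \in \langle \pia_n, \tau \rangle$ inside $\HOut$.

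The step I expect to require the most care is the combinatorial bookkeeping underlying the parity dichotomy: one must verify that a reduced palindrome in $F_n$ of odd length genuinely has a single-letter middle position (rather than an accidental cancellation), and that the change-of-lift formulas preserve reducedness at each stage of the telescoping. These are elementary facts about reduced words in $F_n$, but they are the hinge of the argument, and stating and proving them cleanly is essentially the whole proof.
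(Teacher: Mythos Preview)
Your argument is correct and takes a genuinely different route from the paper. The paper identifies $\mathrm{HAut}(F_n)$ with the full automorphism group of the free product $(\Z/2)^{\ast(n+1)} \cong F_n \rtimes \langle \iota \rangle$ (using that $F_n$ is characteristic, together with a theorem of Krsti\'c), and then invokes the Fouxe-Rabinovitch--Gilbert generating set for automorphism groups of free products: factor permutations and partial conjugations. Restricting each such generator to $F_n$ lands visibly in the group generated by $\pia_n$ and $\tau$. Your approach bypasses all of this external machinery: you encode the failure of a lift to lie in $\pia_n$ by the palindromic cocycle $w$ with $\alpha\iota\alpha^{-1}=c_w\iota$, and then kill $w$ directly by a parity argument---a change of lift trivialises even-length $w$, and one further application of $\tau_i^{\pm 1}$ handles odd-length $w$. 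This is more elementary and entirely self-contained; it also gives the sharper byproduct that every element of $\HOut$ is the image of $\pia_n \cdot \{1,\tau_i^{\pm 1}\}$, not just a word in the generators. The paper's approach, on the other hand, immediately yields a finite \emph{presentation} of $\HOut$ from the Fouxe-Rabinovitch--Gilbert presentation, which your cocycle method does not directly provide.

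One small remark on your final paragraph: your worry about reducedness is misplaced. All of the cocycle manipulations take place in the group $F_n$, not at the level of words; the only place reducedness enters is in reading off the parity and the middle letter of the palindrome $w$, which is completely unambiguous once $w$ is written in reduced form. After that, the identities $w = v\iota(v)^{-1}$ (even case) and $w = u\,x_i^{\epsilon}\,\iota(u)^{-1}$ (odd case) are equalities in $F_n$, and the subsequent telescoping is purely algebraic.
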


\begin{proof}We abuse notation, and think of $\Z / 2$ as being generated by $\iota$. The group $F_n \rtimes \Z / 2$ is isomorphic to a free product of $n+1$ copies of $\Z / 2$, one generated by each element $x_i \iota$ ($1 \leq i \leq n$), and another by $\iota$. Due to a theorem of Krsti\'c \cite[Section 2]{Krs92}, the group of automorphisms of $F_n \rtimes \Z / 2$ that preserve the $F_n$ kernel is isomorphic to $\mathrm{HAut}(F_n) \leq \Autfn$, via restriction. 

As Jensen--McCammond--Meier observed \cite{JMM07}, the $F_n$ kernel is characteristic (it is the subgroup of even length words on the symbols $x_1 \iota , \dots, x_n \iota$), and hence \[ \Aut (\underbrace{(\Z / 2) \ast \dots \ast (\Z / 2)}_{n+1} ) \cong  \mathrm{HAut}(F_n) .\]

Automorphism groups of free products have known, finite presentations, when the factors in the free product are well-behaved (see Fouxe-Rabinovitch \cite{Fou40, Fou41} and Gilbert \cite{Gil87}). In this instance, we have that $ \Aut ((\Z / 2) \ast \dots \ast (\Z / 2))$ is generated by \emph{factor permutations} (that is, a permutation of the set $\mathcal{P} : = \{ \iota, x_1 \iota, \dots, x_n \iota \}$) and \emph{partial conjugations} (an automorphism conjugating one member of $\mathcal{P}$ by another, and fixing the others).

To prove the theorem, it suffices to demonstrate how these types of automorphism act upon the $F_n$ subgroup. As an example, consider the partial conjugation mapping $x_1 \iota$ to $(x_2 \iota) (x_1 \iota) (x_2 \iota)^{-1}$. This automorphism maps $x_1 = (x_1 \iota)\cdot \iota$ to \[ [(x_2 \iota) (x_1 \iota) (x_2 \iota)^{-1}] \cdot \iota = x_2 {x_1}^{-1} x_2 ,\] restricting to the product $\iota_1 P_{12}$ on $F_n$. As a non-palindromic example, consider the factor permutation induced by interchanging $x_1 \iota$ and $\iota$. For $i \neq 1$, this permutation maps $x_i = (x_i \iota) \cdot \iota$ to \[ (x_i \iota) \cdot ( x_1 \iota) = x_i {x_1}^{-1},\] and inverts $x_1$. This permutation thus restricts to the product $\iota_1 \iota \tau \iota$. It is left as a straightforward calculation to verify that the remaining partial conjugations and factor permutations can be written as a product of the generators given in the statement of the theorem.
\end{proof}

We close this section by remarking that Krsti\'c's theorem allows a finite presentation for $\HOut$ to easily be obtained from the finite presentation for the automorphism group of a free product given by Fouxe-Rabinovitch and Gilbert. However, for our purposes, the found generating set suffices.

%%%%%%%%%%%%%%%%%%%%%%%%%%%%%%%%%%%%%%%%
%%%%%%%%%%%%%%%%%%%%%%%%%%%%%%%%%%%%%%%%

\section{Generating the hyperelliptic Torelli group} \label{secTor}

In this section, we prove Theorem~\ref{hyptor}, showing that $\STn$ can be generated using doubled commutator transvections. Our strategy is to find a (finite) presentation for the image of $\HOut$ in $\GLn$ and lift its relators up to $\STn \lhd \HOut$ to form a normal generating set.

Let $\mathrm{HM}_n(\Z)$ denote the image of $\HOut$ in $\GLn$. A presentation for $\HMn$ is obtained using the following lemma.

\begin{lemma} \label{imageseq} The image of $\HMn$ in $\GLnt$ under mod 2 entry reduction is isomorphic to the symmetric group $\Omega_{n+1}$.
\end{lemma}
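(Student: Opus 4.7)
The plan is to compute $\overline{\HMn}$ explicitly on generators, and then to match it with the image of the standard $n$-dimensional mod $2$ permutation representation of $\Omega_{n+1}$.

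First, I would cut down the generating set using Theorem~\ref{hgens}, which says $\HOut$ is generated by (the image of) $\pia_n$ together with $\tau$, so $\HMn$ is generated by the images in $\GLn$ of $\sigma_i$, $\iota_i$, $P_{ij}$ and of $\tau$. Upon reduction mod $2$, the inversion matrices $J_i$ and the elementary palindromic matrices $Q_{ji}$ all lie in $\Gamma_n[2]$ and therefore die; hence $\overline{\HMn} \leq \GLnt$ is generated by the permutation matrices $\bar{s}_i$ (coming from $\sigma_i$) together with the single matrix $\bar{\tau}$. A direct computation on the basis $\{x_1,\dots,x_n\}$ of $H_1(F_n,\Z/2)$ shows $\bar{\tau}$ is the upper-triangular matrix with $1$'s on the diagonal and $1$'s filling out the rest of the first row.

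Next I would realise $\Omega_{n+1}$ inside $\GLnt$ via its standard $n$-dimensional permutation representation. Let $f_1,\dots,f_{n+1}$ be the standard basis of $(\Z/2)^{n+1}$ and let $V \leq (\Z/2)^{n+1}$ be the $\Omega_{n+1}$-invariant hyperplane of vectors whose coordinates sum to zero. Then the vectors $e_i := f_i + f_{n+1}$ ($1 \leq i \leq n$) form a basis for $V$. Expressing the action of $\Omega_{n+1}$ in this basis, any transposition $(i,j)$ with $i,j \leq n$ acts as the permutation matrix swapping $e_i$ and $e_j$, and the transposition $(1,n+1)$ acts by $e_1 \mapsto e_1$ and $e_j \mapsto e_1 + e_j$ for $j \neq 1$. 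This last action is precisely $\bar{\tau}$. Since $\Omega_{n+1}$ is generated by $(1,2),(2,3),\dots,(n-1,n)$ together with $(1,n+1)$, the images of these transpositions generate exactly $\overline{\HMn}$.

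The remaining task is to check that the representation on $V$ is faithful, which is routine: if $\pi \in \Omega_{n+1}$ fixes every $e_i = f_i + f_{n+1}$, then $\{\pi(i),\pi(n+1)\} = \{i, n+1\}$ for each $i \leq n$, and a case analysis on $\pi(n+1)$ forces $\pi = 1$. I do not expect any genuine obstacle in this proof; the only real choice is the correct permutation model, chosen so that $\bar{\tau}$ is visibly a transposition swapping the ``extra'' point with $x_1$, after which the identification $\overline{\HMn} \cong \Omega_{n+1}$ is immediate.
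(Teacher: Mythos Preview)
Your proposal is correct and essentially the same as the paper's argument. The paper also reduces to the generators $\bar s_i$ and $t=\bar\tau$ via Theorem~\ref{hgens}, and then identifies the image with $\Omega_{n+1}$ via the permutation action on the $(n+1)$-element set $\{e_1,\dots,e_n,\,e_1+\cdots+e_n\}\subset(\Z/2)^n$; your model using the hyperplane $V\subset(\Z/2)^{n+1}$ is the dual description of the same permutation representation, with your transposition $(1,n+1)$ playing the role of the paper's ``zeroth strand'' $t$.
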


\begin{proof}Let $\Psi_2 : \HMn \to \GLnt$ denote the mod 2 reduction map. By Theorem~\ref{hgens}, the group $\HMn$ is generated by the principal level 2 congruence group $\Gamma_n[2]$, the permutation matrices in $\GLn$, and the matrix $T \in \HMn$ that has $\tau \in \HOut$ as a pre-image. It follows that the image of $\HMn$ under $\Psi_2$ is generated by the permutation matrices in $\GLnt$ and the matrix $\Psi_2(T) =: t$.

It is a straightforward exercise to show that the set $\mathcal{V}$ consisting of the standard basis row vectors $e_1, \dots , e_n$ of $(\Z / 2)^n$, and their sum $e_1 + \dots +e_n$, is closed under the action of $\Psi_2(\HMn)$ by right multiplication (that is, column operations). This action is clearly faithful, and induces a surjection of $\Psi_2(\HMn)$ onto the symmetric group on $\mathcal{V}$. Surjectivity follows due to the permutation matrices acting transitively on the standard basis vectors, with $t$ interchanging $e_1$ and $e_1 + \dots + e_n$. Note that this gives that $t$ joins onto the standard braid presentation of the group of permutation matrices as a `zeroth strand' to present $\Omega_{n+1} = \langle t, \bar s_1, \dots , \bar s_{n-1} \rangle$, where $\bar s_i$ is the 2-cycle $(e_i \hspace{0.05in} e_{i+1})$.
\end{proof}

\textbf{A comparison with braid groups.} Lemma~\ref{imageseq} provides the short exact sequence
\[ 1 \longrightarrow \Gamma_n[2] \longrightarrow \HMn \longrightarrow \Omega_{n+1} \longrightarrow 1. \] Pulling this back to $\HOut$, we obtain the sequence
\[ 1 \longrightarrow (\ppia_n)^{nc} \longrightarrow \HOut \longrightarrow \Omega_{n+1} \longrightarrow 1, \] where $(\ppia_n)^{nc}$ denotes the normal closure in $\HOut$ of the \emph{pure palindromic automorphism group} $\ppia_n$, which is the group generated by inversions and elementary palindromic automorphisms. We note that these sequences compare favorably with similar short exact sequences given by A'Campo \cite{Aca79} and Arnol'd \cite{Arn68}, with the braid group $B_{n+1}$ and pure braid group $PB_{n+1}$ playing the roles of $\HOut$ and $(\ppia_n)^{nc}$, respectively (see \cite[Section 2]{BMP15} for a discussion). 

One explanation for this relationship between $\HOut$ and $B_{n+1}$ is obtained by viewing the braid group $B_{2g+1}$ as the hyperelliptic mapping class group of an oriented surface $S_g^1$ with one boundary component. Since $S_g^1$ is a $K(F_{2g}, 1)$ space, the difference between $B_{2g}$ and $\mathrm{HOut}(F_{2g})$ is the choice between centralizing the hyperelliptic involution using self-maps of $S_g^1$ that fix the boundary pointwise or not. We note that the Dehn--Nielsen--Baer theorem also allows us to embed $B_{2g+1}$ as a subgroup of $\mathrm{HOut}(F_{2g})$.

Motivated by these geometric considerations, and the relationship between $(\ppia_n)^{nc}$ and the pure braid group $PB_{n+1}$, we pose the following problem.

\begin{problem}Describe the geometric and group theoretic properties of the group $(\ppia_n)^{nc}$.
\end{problem}

\textbf{A combinatorial construction.} As finite presentations are known for both the kernel and quotient of the above sequence, we may calculate a finite presentation for the group $\HMn$, which we do now.

\begin{theorem} \label{hmpres}Let $\epsilon \in \{ \pm 1 \}$. On the generating set
\[ \{  Q_{jk}, J_k, s_i, T \mid 1 \leq i, j \neq k \leq n \}, \] the group $\HMn$ is presented by the following list of defining relations:
\begin{enumerate} \item all relations in the finite presentation of $\Gamma_n[2] = \langle Q_{jk}, J_k \rangle$ given in Corollary~\ref{LevelGen},
\item the relations holding in the standard braid presentation of $\Omega_n = \langle s_i \rangle$.
\item $T^2 = Q_{12}Q_{13} \dots Q_{1n}$,
\item $T s_1 T = s_1 T s_1$,
\item all relations encoding the natural permutation action of $\Omega_n = \langle s_i \rangle$ on the finite generating set of $\Gamma_n[2] = \langle Q_{jk}, J_k \rangle$,
\item $J_1 \underline{T^{\epsilon} J_1 T^{-\epsilon}} = T^{ - 2 \epsilon}$,
\item $\underline{T^{\epsilon} J_k T^{-\epsilon}} J_k = {Q_{1k}}^{\epsilon}$ for $k > 1$,
\item $\underline{T Q_{12} T^{-1}} = Q_{12}$,
\item $J_1 J_2 Q_{21} {Q_{12}}^{-1} \underline{T Q_{21} T^{-1}} =  \left [ {Q_{13}}^{-1} \dots {Q_{1n}}^{-1} \right ] \cdot \left [ {Q_{23}}^{-1} \dots {Q_{2n}}^{-1} \right ]$,
\item $J_1 J_2 {Q_{12}}^{-1} Q_{21} \underline{T^{-1} Q_{21} T} =  \left [ {Q_{13}}^{-1} \dots {Q_{1n}}^{-1} \right ] \cdot \left [ {Q_{23}} \dots {Q_{2n}}\right ]$,
\item $\underline{T^{\epsilon} Q_{23} T^{-\epsilon}} = {Q_{13}}^{\epsilon} Q_{23}$.

\end{enumerate}
\end{theorem}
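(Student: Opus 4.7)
The plan is to apply the standard recipe for presenting a group extension, fed by the short exact sequence of Lemma~\ref{imageseq},
\[ 1 \longrightarrow \Gamma_n[2] \longrightarrow \HMn \longrightarrow \Omega_{n+1} \longrightarrow 1, \]
the presentation of $\Gamma_n[2]$ from Corollary~\ref{LevelGen}, and the Coxeter presentation of $\Omega_{n+1}$ on generators $\bar s_1, \ldots, \bar s_{n-1}, \bar t$, where $\bar t$ is the additional zeroth-strand transposition identified in the proof of Lemma~\ref{imageseq}. The recipe outputs a presentation of $\HMn$ on the union of the two generating sets (with $\bar s_i, \bar t$ replaced by chosen lifts $s_i, T \in \HMn$), whose relations are of three types: the kernel relations; lifts of each quotient relator rewritten as an element of the kernel; and conjugation relations describing how the lifts $s_i$ and $T$ act on the kernel generators. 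I would then check that the list (1)-(11) in the statement is exactly this output.

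The bulk of the proof consists of explicit matrix computations in $\GLn$, carried out with the identifications $T = I + \sum_{k \geq 2} E_{1k}$, $J_k = I - 2E_{kk}$, and $Q_{jk} = I + 2E_{jk}$. First, I would observe that the permutation matrices $s_i$ already satisfy the Coxeter relations of $\Omega_n$ on the nose inside $\GLn$, so those quotient relators lift trivially and appear as item (2); they also act on the kernel by the obvious permutation of indices, giving item (5). Direct multiplication next yields $T^2 = I + 2\sum_{k \geq 2} E_{1k} = Q_{12}Q_{13}\cdots Q_{1n}$ (item (3)) and $T s_1 T = s_1 T s_1$ (item (4)). Finally, explicit computation of each of $T^{\epsilon} J_1 T^{-\epsilon}$, $T^{\epsilon} J_k T^{-\epsilon}$ for $k > 1$, $T Q_{12} T^{-1}$, $T^{\pm 1} Q_{21} T^{\mp 1}$, and $T^{\epsilon} Q_{23} T^{-\epsilon}$ produces items (6)-(11) after trivial rearrangement, with the $\epsilon = -1$ cases obtained from the $\epsilon = 1$ cases by inversion.

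The main obstacle I anticipate is a combinatorial completeness check rather than any individual matrix calculation. Items (6)-(11) pin down $T$-conjugation only on the short list $\{J_1,\ J_k\ (k > 1),\ Q_{12},\ Q_{21},\ Q_{23}\}$, so I would need to verify that these, together with item (5) and the commutation $[T, s_i] = 1$ for $i \geq 2$ (itself a direct matrix check, implicit in the extension recipe), determine $T g T^{-1}$ for every generator $g$ of $\Gamma_n[2]$. The key point is that every $J_k$ and every $Q_{jk}$ lies in the $\langle s_2, \ldots, s_{n-1}\rangle$-orbit of one of the listed representatives, split into the cases $j = 1$, $k = 1$, and $j, k \neq 1$, and $\langle s_2, \ldots, s_{n-1} \rangle$ commutes with $T$, so the $T$-action on any kernel generator is forced by the listed cases. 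Once this reduction is in place, the standard extension recipe delivers the presentation (1)-(11) as claimed.
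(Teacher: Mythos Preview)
Your proposal is correct and follows essentially the same approach as the paper: both apply the standard extension-presentation recipe to the short exact sequence $1 \to \Gamma_n[2] \to \HMn \to \Omega_{n+1} \to 1$, classifying the resulting relations as kernel relations (item~1), lifted quotient relations (items~2--4), and conjugation relations (items~5--11). Your write-up is in fact more explicit than the paper's---you supply the concrete matrix identifications $T = I + \sum_{k\ge 2}E_{1k}$, $J_k = I - 2E_{kk}$, $Q_{jk} = I + 2E_{jk}$ and spell out the orbit argument under $\langle s_2,\dots,s_{n-1}\rangle$ (which commutes with $T$) needed to reduce all $T$-conjugation relations to the representatives in (6)--(11), whereas the paper leaves both the matrix checks and this completeness reduction implicit.
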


\begin{proof}Consider a short exact sequence of groups,
\[ 1 \longrightarrow K = \langle A \rangle \longrightarrow E = \langle B \rangle \longrightarrow G = \langle \bar B \mid R \rangle \longrightarrow 1. \] We will abuse notation, and identify $K$ with its image inside $E$. If the generating set $\bar B$ is the image of the set $B$ under the quotient by $K$, an elementary construction of combinatorial group theory produces a presentation for the group $E$ (see, for example, MKS). Its generators are those in $B$, with three different families of relations giving a defining list: \emph{kernel relations}, \emph{lifted relations} and \emph{conjugation relation}. We outline these in the next paragraph.

Any relation that holds in $K$ on the letters of $A^{\pm1}$, also holds in $E$. By rewriting the relation using letters of $B^{\pm1}$ instead, we obtain what we call a \emph{kernel relation} in $E$. Similarly, any relator $\bar r$ in $R$,  when rewritten using letters of $B^{\pm1}$ instead of $\bar B^{\pm1}$ in the obvious way, must lift to equal a member $\kappa$ of the kernel $K$. Writing this lift of $\bar r$ as $r$, we obtain a \emph{lifted relation} by setting $r = \kappa$, having rewritten $\kappa$ using letters of $B^{\pm1}$. Finally, we obtain a \emph{conjugation relation} by lifting some $\bar b \in \bar B$ to its preimage $b \in B$ and considering the conjugate $b^{\epsilon} a b^{-\epsilon}$ for some $a \in A$ and $\epsilon \in \{ \pm 1\}$. This conjugate must equal some $\kappa ' \in K$, so rewriting $\kappa '$ using $B$, we obtain the conjugation relation $b^{\epsilon} a b^{-\epsilon} = \kappa '$. 

Turning attention to the above short exact sequence for $\HMn$, we obtain the finite presentation in the statement of the theorem as follows. We generate $\HMn$ using the set given by Theorem~\ref{hgens}, and call this set $B$, as above. Notice that $B$ is partitioned into two subsets: one subset generating the kernel $\Gamma_n[2]$, and one mapping to the generating set for $\Omega_{n+1} \leq \GLnt$ given by Lemma~\ref{imageseq}.

Relations of type 1 in this theorem's statement correspond to kernel relations, in $\Gamma_n[2]$. Relations of type 2 -- 4 correspond to lifted relations, since, as was noted in the proof of Lemma~\ref{imageseq}, the quotient of $\HMn$ by $\Gamma_n[2]$ is presented by the standard braid presentation on the image of $B$.

Finally, relations of type 5 -- 11 correspond to conjugation relations. Type 5 deals with the lifts of the permutation matrices in $\Omega_{n+1} \leq \GLnt$, whereas types 6 -- 11 demonstrate the effect of conjugating $\Gamma_n[2]$ by $T$. Although these are not all written in the form `$b^{\epsilon} a b^{-\epsilon} = \kappa '$' as was given above, the portions of each relation corresponding to the conjugate $b^{\epsilon} a b^{-\epsilon}$ are underlined.  
\end{proof}

\textbf{Generating the hyperelliptic Torelli group.} With a presentation for $\HMn$ established, we now simply have to lift its relators to $\HOut$, to obtain a normal generating set for the hyperelliptic Torelli group, $\STn$. We do this now.

\begin{proof}[Proof of Theorem~\ref{hyptor}]Let $X$ denote the generating set for $\HOut$ from Theorem~\ref{hgens}, and let $\bar X$ denote its image in $\HMn$. We recall the short exact sequence
\[ 1 \longrightarrow \STn \longrightarrow \HOut = \langle X \rangle \longrightarrow \HMn = \langle \bar X \mid \mathcal{R} \rangle \longrightarrow 1, \] where $\mathcal{R}$ is a set of relators obtained from Theorem~\ref{hmpres}. It is a classical fact from combinatorial group theory that the kernel $\STn$ of such a sequence is generated by all conjugates in $\HOut$ of the obvious lifts of the relators in $\mathcal{R}$.

It is thus left to calculate appropriate lifts of the defining relators of $\HMn$ from Lemma~\ref{hmpres}. Relators of type 1 necessarily lift to products of doubled commutator transvections and separating $\pi$-twists, as they lift to members of the image in $\HOut$ of the palindromic Torelli group, $\ptor_n$ (recall Theorem~\ref{paltor}). Relators of types 2 -- 10 all lift trivially to $\HOut$, while relators of type 11 lift to doubled commutator transvections.

To finish proving the theorem, we use a geometric argument to show that any separating $\pi$-twist may be written as a product of doubled commutator transvections. Consider the generating set for the surface groups $\pi_1(S_{2g}^2) = F_{2g+1}$ seen in Figure~\ref{surface}. (The case for an even rank free group follows similarly, using a once-punctured surface). The Dehn twists about the separating curves $\chi$ and $\eta$ commute with the shown involution $s$, and are also conjugate by a mapping class that commutes with $s$. We see this by noting that $\chi$ and $\omega$ descend to simple closed curves surrounding three marked points on the punctured sphere $S_{2g}^2 / s$, and then applying the `change of coordinates principle' [primer] and the Birman--Hilden theorem.

Since $s$ maps to $\iota \in \mathrm{Out}(\pi_1(S_{2g+1}^2)) = \Out (F_{2g+1})$ via the Dehn--Nielsen--Baer homomorphism, the centralizer of the mapping class $[s]$ in $\Mod (S_{2g+1}^2)$ embeds into $\mathrm{HOut} (F_{2g+1})$. The twist about $\chi$ maps to a separating $\pi$-twist in $\mathrm{HOut} (F_{2g+1})$, as was noted in Section~\ref{geomob}. It is straightforward to check that the twist about $\eta$ maps to a product of doubled commutator transvections by $[a_1, b_1]$. These twists remain conjugate under the Dehn--Nielsen--Baer homomorphism, by some member of $\mathrm{HOut} (F_{2g+1})$. Since any separating $\pi$-twist is conjugate in $\mathrm{HOut} (F_{2g+1})$ to the image of the twist about $\chi$, we have thus shown that $\mathcal{ST}(2g+1)$ is generated by doubled commutator transvections.
\end{proof}

\begin{figure}[h!]
\centering
\begin{subfigure}[c]{\textwidth}
\centering
\begin{overpic}[width=5in]{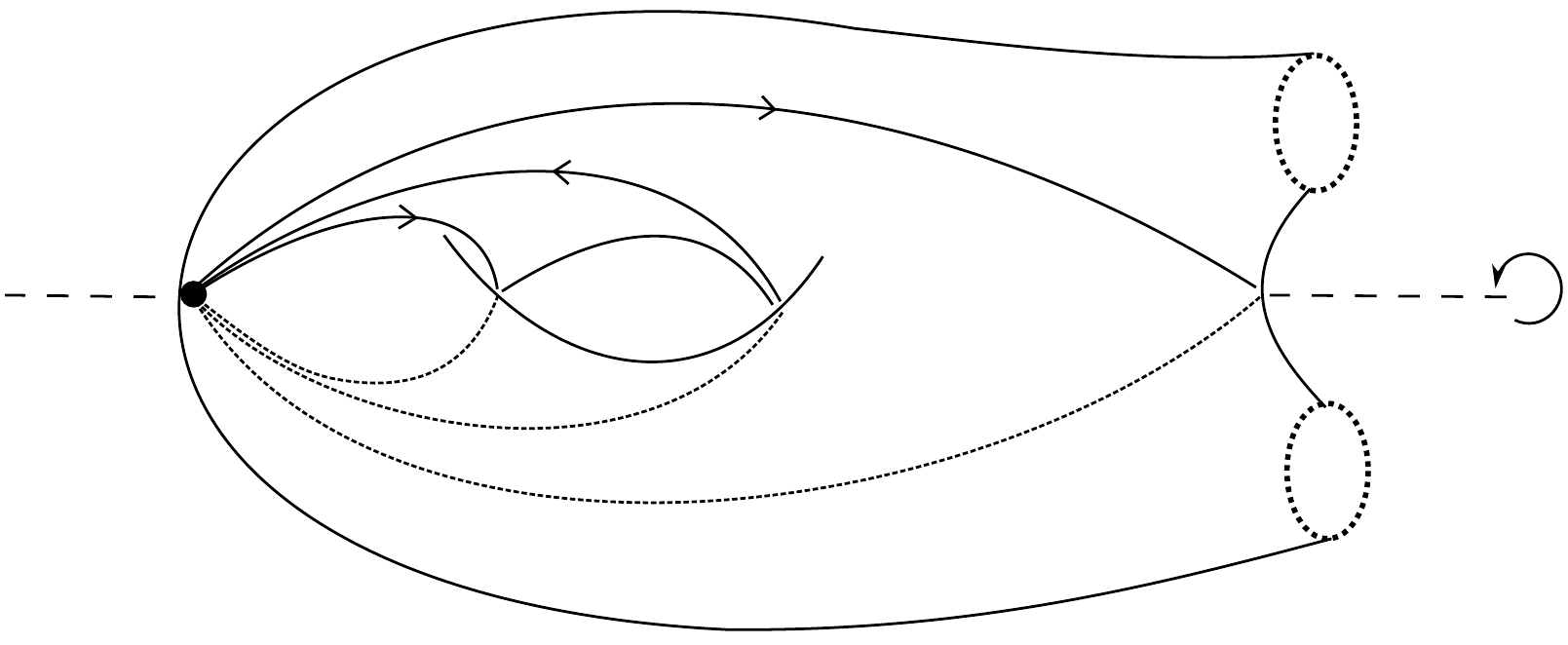}
\put(22,24.3){$x_1$}
\put(35,27){$x_2$}
\put(48,31.3){$x_3$}
\put(101,22){$s$}
\end{overpic}
\caption{Each loop $x_i$ depicted forms part of a free basis for the surface's free fundamental group that is acted upon by $\iota$ by the rotation $s$ by $\pi$. For higher genus surfaces, we extend the free basis using the obvious $s$-invariant loops.}
\label{Basis}
\end{subfigure}
\quad

\begin{subfigure}[c]{\textwidth}
\centering
\begin{overpic}[width=5in]{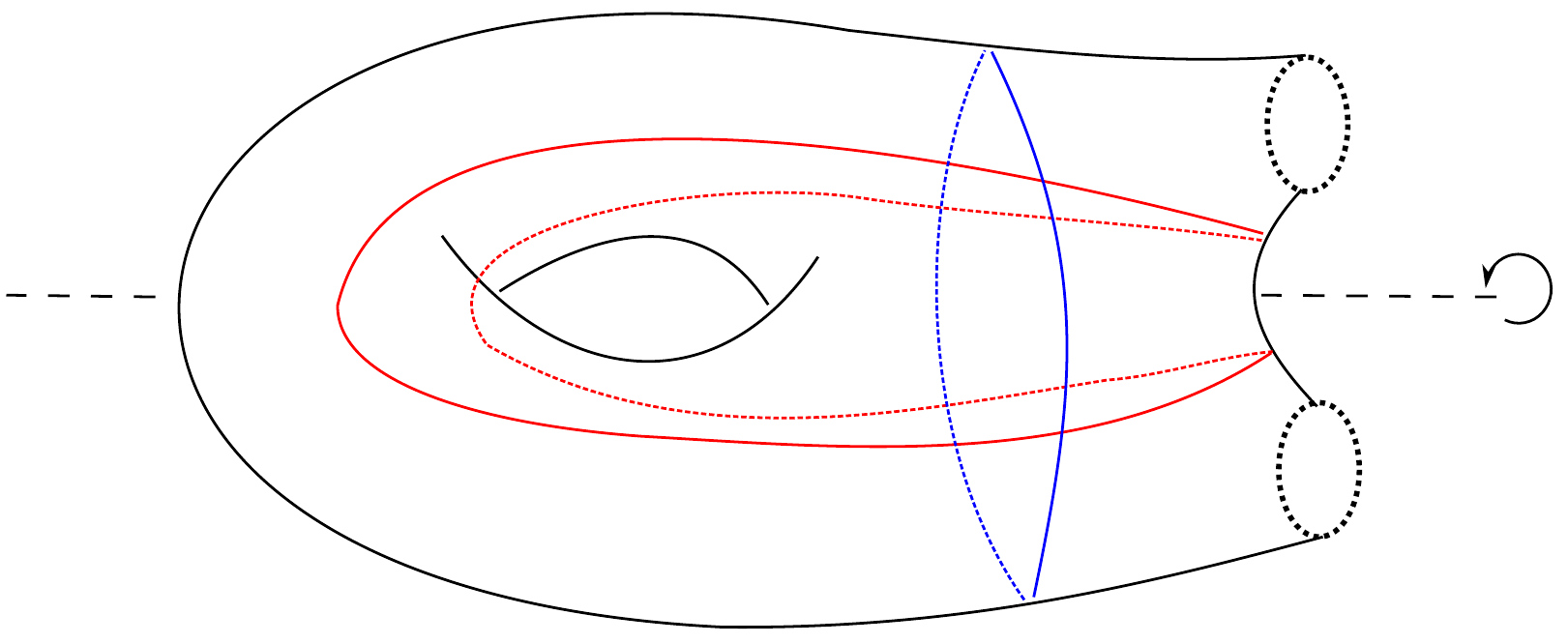}
\put(67,33){$\eta$}
\put(30,12){$\chi$}
\put(100.2,21.7){$s$}
\end{overpic}
\caption{The Dehn twists about $\chi$ and $\eta$ act on the basis from subfigure (a) as members of $\STn$.}
\label{DehnTwist}
\end{subfigure}
\caption{}
\label{surface}
\end{figure}

\section{The locus of hyperelliptic graphs}

In this section, we turn our attention to the subspace of Torelli space consisting of homology-marked graphs admitting a hyperelliptic involution. This space brings together the geometry of the hyperelliptic Torelli group $\STn$, for which it is an Eilenberg--Maclane space, and the period mapping, for which it behaves like a branch locus.

\subsection{Hyperelliptic marked graphs}\label{HyperMarked} Recall that by a \emph{hyperelliptic involution} of $F_n$, we mean a (perhaps outer) automorphism of $F_n$ with order 2 that acts as $-I$ on the abelianisation $H_1(F_n) \cong \Z^n$.

\textbf{A geometric model for $\STn$.} Denote by $\mathcal{L}_n$ the subspace of $CV_n$ consisting of marked graphs that are fixed by \emph{some} hyperelliptic involution in $\Outfn$ (that is, $\mathcal{L}_n$ is the union of the fixed sets of all such involutions in $\Outfn$). We say that such marked graphs \emph{admit a hyperelliptic involution}. Let $\mathcal{L}_n^{\iota}$ denote the fixed set of the explicit involution $\iota$. 

Any marked graph admits at most one hyperelliptic involution: if two were admitted, their difference would give a finite-order element of the Torelli group $\mathcal{I}_n$, which is a torsion-free group. Thus, by Lemma~\ref{hypuniq}, we conclude that $\mathcal{L}_n$ is a disjoint union of infinitely many copies of $\mathcal{L}_n^{\iota}$. The following theorem follows from work of Krsti\'c--Vogtmann. 

\begin{theorem}\label{cont}The space $\mathcal{L}_n^{\iota}$ is contractible. \end{theorem}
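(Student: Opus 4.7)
The plan is to recognize $\mathcal{L}_n^{\iota}$ as the fixed-point set of the finite cyclic subgroup $\langle [\iota] \rangle \leq \Outfn$ acting on $CV_n$, and then to invoke the realization/fixed-point theorem of Krsti\'c--Vogtmann, which states that the fixed-point set in the spine $K_n$ of any finite subgroup of $\Outfn$ is contractible.

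First I would carefully identify the space $\mathcal{L}_n^{\iota}$ with the fixed set $\mathrm{Fix}([\iota]) \subset CV_n$: a marked graph $(\G, \rho) \in CV_n$ is stabilized by $[\iota]$ if and only if it admits an isometric realization of $[\iota]$, which (since any involution of a connected graph fixes a vertex or edge, as in the proof of Lemma~\ref{hypuniq}) means precisely that $(\G,\rho) \in \mathcal{L}_n^{\iota}$. The deformation retraction of $CV_n$ onto its spine $K_n$ is $\Outfn$-equivariant, so it restricts to a deformation retraction of $\mathrm{Fix}([\iota]) \subset CV_n$ onto $\mathrm{Fix}([\iota]) \subset K_n$. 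It therefore suffices to show the latter is contractible.

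Next I would apply the theorem of Krsti\'c--Vogtmann: for any finite subgroup $H \leq \Outfn$, the fixed-point subcomplex of $H$ acting on the spine $K_n$ is contractible. Specializing to $H = \langle [\iota] \rangle \cong \Z/2$ (which is finite by Lemma~\ref{hypuniq} and its preceding discussion) gives immediately that $\mathrm{Fix}([\iota]) \subset K_n$ is contractible, and combining with the retraction above, $\mathcal{L}_n^{\iota}$ is contractible.

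There is essentially no obstacle beyond citing the correct form of Krsti\'c--Vogtmann; the mildly delicate point is verifying that the fixed set in $CV_n$ (a union of open simplices) deformation retracts equivariantly onto the fixed set in the spine, but this follows directly from the equivariance of the standard retraction $CV_n \searrow K_n$. One might also wish to remark, for completeness, that $\mathrm{Fix}([\iota])$ is non-empty (e.g., the standard rose with the identity marking lies in $\mathcal{L}_n^{\iota}$), which rules out the vacuous case.
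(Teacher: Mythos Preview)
Your proposal is correct and follows essentially the same approach as the paper: both invoke Krsti\'c--Vogtmann's theorem that the fixed set in the spine of any finite subgroup of $\Outfn$ is contractible, and both pass between $\mathcal{L}_n^{\iota}$ and the spine's fixed set via the $\iota$-equivariant deformation retraction of $CV_n$ onto $K_n$. Your write-up is somewhat more detailed (e.g., verifying non-emptiness and spelling out the identification of $\mathcal{L}_n^{\iota}$ with $\mathrm{Fix}([\iota])$), but the argument is the same.
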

\begin{proof}Krsti\'c--Vogtmann established that the fixed set in the spine of outer space of any finite subgroup of $\Outfn$ is contractible. The spine's fixed set corresponding to $\iota$ is homotopy equivalent to $\mathcal{L}_n^{\iota}$, via an $\iota$-equivariant version of the usual deformation retraction of $CV_n$ onto its spine.
\end{proof}

The group $\HOut$ centralizes $\iota$ by definition, and so its action on $CV_n$ preserves $\loc$. This action has finite stabilizers, so $\STn$ being torsion-free gives the following corollary to Theorem~\ref{cont}.
\begin{corollary}\label{eilmac}The quotient $\hloci$ of $\loci$ by $\STn$ is an Eilenberg--Maclane space for $\STn$.
\end{corollary}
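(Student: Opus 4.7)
The plan is to deduce Corollary~\ref{eilmac} directly from Theorem~\ref{cont} by showing that $\loci \to \hloci$ is a covering map whose total space is contractible, so that $\hloci$ is aspherical with fundamental group $\STn$.

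First, I would verify that the action of $\STn$ on $CV_n$ restricts to a well-defined action on $\loci$. Since $\STn \leq \HOut$ by definition, and every element of $\HOut$ centralizes $[\iota] \in \Outfn$, any $[\phi] \in \STn$ carries the fixed set of $[\iota]$ in $CV_n$ to itself: if $[\iota]\cdot(\G,\rho) = (\G,\rho)$, then $[\iota]\cdot[\phi]\cdot(\G,\rho) = [\phi]\cdot[\iota]\cdot(\G,\rho) = [\phi]\cdot(\G,\rho)$. Thus $\STn$ acts on $\loci$.

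Next, I would show this action is free and properly discontinuous. Both properties are inherited from the ambient action of $\Outfn$ on $CV_n$, which is properly discontinuous with finite point stabilizers. Since $\STn \leq \calI_n$, and $\calI_n$ is torsion-free (as recalled in Section 2.1, this is precisely why $CV_n \to \calT_n$ is a covering map), the group $\STn$ contains no nontrivial finite subgroup. Consequently, every point stabilizer of the $\STn$-action on $\loci$ — being a finite subgroup of $\STn$ — is trivial, so the action is free; properly discontinuity carries over from the $\Outfn$-action unchanged.

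Combining these facts, the quotient map $\loci \to \hloci = \loci/\STn$ is a covering projection. By Theorem~\ref{cont}, $\loci$ is contractible, so it serves as the universal cover of $\hloci$. Therefore $\hloci$ is aspherical with $\pi_1(\hloci) \cong \STn$, that is, an Eilenberg--Maclane space $K(\STn,1)$. There is no substantive obstacle here; the work has all been done in Theorem~\ref{cont} and in the standard torsion-freeness of $\calI_n$, and the corollary is essentially a formal consequence.
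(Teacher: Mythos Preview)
Your argument is correct and essentially identical to the paper's: the paper notes that $\HOut$ preserves $\loci$ because it centralizes $\iota$, that the action has finite stabilizers, and that $\STn$ is torsion-free, then deduces the corollary from Theorem~\ref{cont}. You have simply spelled out the covering-space consequence in slightly more detail.
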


One motivation to study $\hloci$ (and $\hloc := \loc / \STn$, a disjoint union of copies of $\hloci$) is thus to better understand the topological properties of the group $\STn$.

\textbf{Branch-like behavior of the period mapping.} One key reason to analyze the period mapping is its similarity to the classical period mapping, defined on homology-marked Riemann surfaces. As discussed in Section 1, the classical map is a 2:1 cover onto its image, branched along the locus of hyperelliptic surfaces. As we established in Section~\ref{PeriodFiber}, the fibers of the period mapping for free groups are far from discrete in general, and so $\Phi$ seems rather unlike a branched covering map. However, restricting to a dense subspace of $\calT_n$, one can see that $\Phi$ behaves roughly like a 2:1 branched cover in the following sense.  

There is an open dense subset $U\subset \mathcal{Q}_n\cap \text{Im}(\Phi)$ consisting of forms $M$ with $\text{stab}_{\GL_n(\Z)}(M)$ trivial.  In $\calT_n$, the open dense subset  $V$ of trivalent graphs contains no graphs with separating vertices. Pulling back $U$ to $\calT_n$, define $W=\Phi^{-1}(U)\cap V$.  Then $W$ is open, dense and the full pre-image of a subset of $\mathcal{Q}_n$.  Moreover, by Theorem \ref{Fiber}, every fiber in $W$ is a union of contractible simplicial complexes and the stabilizer of each fiber is $\Z/2$ realized as the action of $\{\pm I\}$.  We expect that as $n$ gets large, `most' isometry types of graphs in $\calT_n$ are also 3-edge connected and 3-edge connected as in Example 2 of Section \ref{Examples}, in which case the fiber will consist of exactly 2 points interchanged by $\pm I$.

%However, restricting to a `large' subspace of $\calT_n$, we find that, indeed, $\Phi$ is a 2:1 cover onto its image, branched along the hyperelliptic locus $\hloci$. We make this more precise in \textcolor{red}{Section ?.?}, and also outline various differing behaviors of the fibers of $\Phi$, in order to demonstrate both the similarities and differences between $\Phi$ and a branched cover.

\textbf{Connectivity of $\hloci$.} In the light of Corollary~\ref{eilmac}, the connectivity of $\hloci$ is well-understood. The ultimate goal of this section is to show that the single surviving homotopy group of $\hloci$ (that is, $\pi_1 (\hloci) \cong \STn$) becomes trivial when certain degenerate marked graphs `at infinity' are added to $\hloci$. This compares favorably with a corresponding result of Brendle--Margalit--Putman regarding adding surfaces of compact type to the hyperelliptic locus of moduli space, and positively answers a question of Margalit, who asked if a free group analog of this result holds.

We make precise the notion of these graphs at infinity in Section~\ref{Degen}, and in Section~\ref{goal} prove that they trivialize the fundamental group of $\hloci$

%%%%%%%%%%%%%%%%%

\subsection{Marked graphs with degenerate rank} \label{Degen}
Outer space $CV_n$ is not quite a simplicial complex, but rather a simplicial complex with missing faces.  The simplicial completion $\overl{CV}_n$, \emph{i.e.} the simplicial complex obtained by adding in the missing faces, can be thought of in various ways.  Hatcher introduced the sphere complex and identified it with the simplicial completion of outer space \cite{Hat95}.  Later, Hatcher's sphere complex was interpreted in terms of free splittings--conjugacy classes of free product decompositions of $F_n$.  Free splittings form a poset under inclusion and the free splitting complex is just the simplicial realization of this poset. See \cite{ArSo11} for a careful identification of the 1-skeleton of the sphere complex and the free splitting complex.    

Via Bass-Serre theory, we obtain a third interpretation of a free splitting as a restricted class of actions of $F_n$ on simplicial trees.  A group action of $F_n$ on a simplicial tree $T$, denoted $F_n\curvearrowright T$, is \emph{minimal} if there are no $F_n$-invariant subtrees.  In particular, a minimal action has no global fixed points.  Define a \emph{free splitting} to be a minimal, faithful action of $F_n$ on a simplicial tree with trivial edge stabilizers.   Two free splittings are \emph{conjugate} if there is an $F_n$-equivariant isomorphism between their corresponding trees.  Much of the terminology and formalism we use to describe the free splitting complex in this section was adapted from work of Handel-Mosher \cite{HM13}.  For specific details on how to construct the free splitting complex from actions on trees, we refer the reader to \S 1 of their paper.

The geometric way to understand the simplicial completion $\overl{CV}_n$ can be phrased in terms of degenerate metrics on rank $n$ graphs. Given a rank $n$ graph $\Gamma$, consider a subgraph $\Delta=\coprod_{i=1}^m\Delta_i\subset \Gamma$.  We assume that $\Delta\neq \Gamma$.  Collapsing each component of $\Delta$ gives a new graph $\Gamma'$ together with special vertices $\delta_1,\ldots, \delta_m$, which are the images of $\Delta_1,\ldots, \Delta_m$ under the collapse map $c:\Gamma\rightarrow \Gamma'$.  

Let $p:\widetilde{\Gamma}\rightarrow \G$, and $p':\widetilde{\Gamma}'\rightarrow\G'$ denote the universal covers of $\G$ and $\G'$.  An action $F_n \curvearrowright \widetilde{\Gamma}$ induces a quotient action on $\widetilde{\Gamma'}$, where the stabilizer of a vertex $\tilde{v}\in \widetilde{\Gamma'}$ is a free group equal to the rank of $\pi_1(c^{-1}(p'(\tilde{v})))$. Thus the only vertices with possibly non-trivial vertex stabilizers are the lifts of the $\delta_i$.  $\Gamma'$ can therefore be identified with a degenerate metric on $\Gamma$, where the length of each edge in $\Delta$ is 0.  There is then a path in $\overl{CV}_n$ from $\Gamma$ to $\Gamma'$ obtained by scaling the metric on $\Delta$ by $t\in [0,1]$.  The metric on $\Gamma\setminus \Delta$ is also rescaled to maintain the volume 1 condition.  

Bass--Serre theory associates to $\Gamma'$ a free splitting of $F_n$ as $F_n=H_1*\cdots*H_m*F'$, where $H_i$ is the stabilizer associated to $\delta_i$, and $F'$ is the fundamental group of $\Gamma'$.  The collapse map $c:\G\rightarrow \G'$ induces a surjection $c_*:\pi_1(\G)\rightarrow \pi_1(\G')$ which is split because $\pi_1(\G')$ is free.  Obviously, there is not a unique splitting but the free splitting decomposition defined by $c$ gives a preferred splitting $s:\pi_1(\G')\rightarrow F'\leq\pi_1(\G)$ which is well-defined up to conjugation in $\pi_1(\G)$. Given a marking $\rho$ on $\Gamma$, define the \emph{marking of the degenerate graph} $\G'$ to be the conjugacy class $\rho'$ of the splitting $s:\pi_1(\G')\rightarrow \pi_1(\G)$. We refer to the pair $(\G',\rho')$ as a \emph{marked degenerate graph}. The special vertices $\delta_i$ are called \emph{degenerate vertices}, and the \emph{order} of a degenerate vertex $\delta_i$ is $\rk{H_i}$.

$\Out(F_n)$ acts on $CV_n$ by changing the marking, and this action extends to a simplicial action on $\overl{CV}_n$.  Note however that the extended action will no longer be properly discontinuous.  Suppose we have two rank $n$ marked graphs $(\G_1,\what{\rho}_1)$ and $(\G_2,\what{\rho}_2)$ and an element $\phi\in \Out(F_n)$ such that $\phi.(\G_1,\what{\rho}_1)=(\G_2,\what{\rho}_2)$.  If $c_1:(\G_1,\what{\rho}_1)\rightarrow(G_1',\what{\rho}_1')$ is a marked degeneration of $(\G_1,\what{\rho}_1)$, then there exists a marked degeneration $c_2:(\G_2,\what{\rho}_2)\rightarrow (\G_2',\what{\rho}_2')$ and a map $\psi$ such that the folllowing diagram
\begin{equation} \begin{aligned} \xymatrixrowsep{5pc}\xymatrixcolsep{5pc} \xymatrix{
(\G_1,\what{\rho}_1)\ar[d]^{c_1}\ar[r]^{\phi} &  (\G_2,\what{\rho}_2)\ar[d]^{c_2}\\
(\G_1',\what{\rho}_1')\ar[r]^{\phi} & (\G_2',\what{\rho}_2')
}
\end{aligned}\tag{$**$}
\end{equation}
commutes.  If we quotient $\overl{CV}_n$ by the Torelli subgroup $\calI_n$, the markings above become homology markings.  We then get a corresponding commutative diagram for the action of a matrix $A\in \GL_n(\Z)=\Out(F_n)/\calI_n$:
\begin{equation}\begin{aligned}\xymatrixrowsep{5pc}\xymatrixcolsep{5pc} \xymatrix{
(\G_1,\rho_1)\ar[d]^{c_1}\ar[r]^{A} &  (\G_2,\rho_2)\ar[d]^{c_2}\\
(\G_1',\rho'_1)\ar[r]^{A} & (\G_2',\rho'_2)}
\end{aligned}\tag{$*$}
\end{equation}

A maximal simplex in $\overl{CV}_n$ has dimension $3n-4$, with a barycenter represented by a 3-regular graph.  If this graph has no separating edges, then no single edge collapse lies in $\partial \overl{CV}_n=\overl{CV}_n\setminus CV_n$. Thus no codimension-1 simplices in reduced outer space correspond to degenerate marked graphs.  However, higher codimension faces of maximal simplices may correspond to degenerate metrics.  The next lemma will be used in the proof of Theorem C, when we study the local structure of the boundary $\partial \overl{CV_n}$.

\begin{lemma} \label{simplexChain}Let $M_1,M_2$ be maximal simplices in $CV_n$ whose closures $\overl{M_1},\overl{M_2}\subset \overl{\calT_n}$ share a degenerate face $F_0$. Suppose that the barycenter of $F_0$ corresponds to a marked rank $k\leq n$ rose $R_k$ with a single degenerate vertex of rank $n-k$ at the wedge point.  Then there exists a chain of maximal simplices $M_1=S_1, S_2,\ldots, S_m=M_2\in \calT_n$ such that $M_i\cap M_{i+1}\neq \emptyset $ for all  $i=1,\ldots,m-1$, and $F_0\subset \overl{S_i}\cap \overl{S_j}$ for all $i\neq j$. 
\end{lemma}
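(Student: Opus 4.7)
The plan is to show that the collection of top-dimensional simplices of $\overl{\calT_n}$ whose closure contains $F_0$ is connected in the sense required, by reducing the question to the connectivity of the maximal-simplex graph of a lower-rank outer space, and then invoking the contractibility (hence connectivity) of the spine $K_{n-k}$ of $CV_{n-k}$, equivalently the classical fact that any two marked trivalent rank-$(n-k)$ graphs are joined by a sequence of Whitehead moves.

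First I would make the following structural observation. A point in the interior of $F_0$ is a marked degenerate graph whose underlying space is $R_k$ with the wedge point $v$ carrying a rank $n-k$ free factor $A \leq F_n$ (up to conjugation), in the Bass--Serre sense described in Section~\ref{Degen}. A top-dimensional simplex $S \subset \overl{\calT_n}$ with $F_0 \subset \overl{S}$ therefore corresponds to a homology-marked trivalent rank-$n$ graph $\Gamma$ containing a subgraph $H \subset \Gamma$ of rank $n-k$ which meets the rest of $\Gamma$ in exactly one vertex $v_H$, and such that collapsing $H$ to $v_H$ realises the marked degenerate graph at the barycenter of $F_0$. Fixing the marking on $F_0$ pins down the homology-marking on the $R_k$-side; the remaining data is a homology-marking on $H$ for the free factor $A$. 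Two maximal simplices $S, S'$ arising this way share a codimension-one face, and hence meet, precisely when the corresponding $H, H'$ differ by a single Whitehead move performed entirely inside the $H$-side (a local blow-down/blow-up that leaves the wedge point $v_H$ and the $R_k$-portion undisturbed).

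Next I would invoke connectivity of $K_{n-k}$: the graph whose vertices are top-dimensional simplices of $CV_{n-k}$ and whose edges record pairs sharing a codimension-one face is connected. Under the correspondence above, applied to the $H$-components of $M_1$ and $M_2$, this produces a sequence $H_1 = H_{M_1}, H_2, \ldots, H_m = H_{M_2}$ of trivalent rank-$(n-k)$ marked graphs with consecutive pairs joined by a Whitehead move. Re-wedging each $H_i$ onto $R_k$ at $v_H$ gives the desired sequence $S_1 = M_1, S_2, \ldots, S_m = M_2$ of top-dimensional simplices of $\overl{\calT_n}$. Because every Whitehead move in the sequence occurs strictly inside the $H$-side, the $R_k$-side and the cut vertex $v_H$ persist through each move, so the codimension-one face between $S_i$ and $S_{i+1}$ still has $F_0$ in its closure, yielding $F_0 \subset \overl{S_i} \cap \overl{S_{i+1}}$ (and therefore $F_0 \subset \overl{S_i} \cap \overl{S_j}$ for all $i \neq j$, since each $\overl{S_i}$ contains $F_0$).

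The main obstacle is the marking bookkeeping when passing between $\overl{\calT_n}$ and the auxiliary space of trivalent blow-ups of the degenerate vertex. The marking on a point of $F_0$ is a conjugacy class of splittings realising the free product decomposition $F_n \cong \pi_1(R_k) * A$ compatible with commutative diagram $(*)$ of Section~\ref{Degen}, and one must verify that Whitehead moves in $H$ interact with this splitting exactly as they would with an honest marking in $\calT_{n-k}$, so that homology-classes on the $R_k$-side and across the wedge are preserved. Once this compatibility is in place, the reduction to connectivity of $K_{n-k}$ is immediate, and the lemma follows.
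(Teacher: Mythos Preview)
Your structural observation in the first paragraph is incorrect, and this breaks the reduction you want. You claim that a maximal simplex $S$ with $F_0\subset\overl{S}$ has barycenter a trivalent graph $\Gamma$ containing a rank-$(n-k)$ subgraph $H$ meeting the rest of $\Gamma$ in \emph{exactly one vertex}. But a trivalent graph can never be a one-point wedge $R_k\vee H$: the wedge point would have valence at least $2k+2>3$. What actually happens is that $\Gamma$ contains a connected rank-$(n-k)$ subgraph $\Delta$ with $\Gamma/\Delta=R_k$, and the $k$ remaining edges of $\Gamma$ have all of their (up to $2k$) endpoints scattered over various vertices of $\Delta$. So the ``remaining data'' after fixing the $R_k$-collapse is \emph{not} a homology-marked rank-$(n-k)$ graph alone; it is such a graph together with the attaching data for those $2k$ half-edges. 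Your proposed bijection with maximal simplices of $CV_{n-k}$ and the subsequent appeal to connectivity of $K_{n-k}$ via Whitehead moves do not apply to this richer object without substantially more work.

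The paper sidesteps this issue entirely. Rather than trying to isolate an $H$-side, it first passes from each $M_i$ to a face given by a marked rank-$n$ rose $(R_n,\sigma_i)$ (collapse a maximal tree inside $\Delta_i$), reducing to the problem of connecting two marked roses that both collapse onto $(R_k,\rho_0)$. The compatibility diagram $(*)$ then forces the change-of-marking matrix $A\in\GL_n(\Z)$ to have block-lower-triangular form $\left(\begin{smallmatrix} I_k & 0\\ B & C\end{smallmatrix}\right)$, so $A$ factors into elementary matrices $E_{ij}$ each of which moves only a petal that is killed under the collapse to $R_k$. Each such $E_{ij}$ is realised by an explicit blow-up/blow-down ``basic path'' in $\calT_n$, every graph along which still collapses onto $(R_k,\rho_0)$. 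This produces the required chain of maximal simplices without ever needing to interpret the star of $F_0$ as a lower-rank outer space.
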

\begin{proof}
Let $(\G_1,\rho_1)\in M_1$ and $(\G_2,\rho_2)\in M_2$ denote the two homology-marked barycenters of $M_1$ and $M_2$ respectively, and $(R_k,\rho_0)$ the barycenter of $F_0$.  We can think of $R_k$ as obtained from $\G_i$, $i=1,2$, by collapsing a connected subgraph $\Delta_i$. Since each quotient $\G_i/\Delta_i$ is a wedge of circles, $\Delta_i$ contains a maximal tree $T_i$, and $R_k$ has a single degenerate vertex of rank $n-k$ at the wedge point $v_0\in R_k$.    

Let $c_i:(\G_i,\rho_i)\rightarrow (R_k,\rho_0)$, $i=1,2$, denote the two collapse maps. If we first collapse each maximal tree $T_i$,  subquotient $\G_i/T_i$ is a rank-$n$ rose $R_n$ which inherits a marking $(R_n, \sigma_i)\in M_i$. Then $c_i=g_i\circ f_i$ factors through collapse maps $f_i:(\G_i,\rho_i)\rightarrow (R_n, \sigma_i)$ and $g_i:(R_n, \sigma_i)\rightarrow (R_k,\rho_0)$. Therefore, in order to prove the lemma, it suffices to find a path from $(R_n, \sigma_1)$ to $(R_n, \sigma_2)$ consisting entirely of rank $n$ graphs which collapse onto $(R_k,\rho_0)$.  

After acting on the left by some element of $\GL_n(\Z)$, we may assume that $\sigma_1$ is the identity-marked rose.   Choose a basis $\{x_1,\ldots,x_n\}$ for $H_1(R_n)$ consisting represented by the wedge circles. We may choose $x_1,\ldots, x_n$ in such a way that the collapse map $g_1:(R_n, \sigma_1)\rightarrow (R_k,\rho_0)$ corresponds to the projection $\pi_k:H_1(R_n)\cong\Z^n\rightarrow H_1(R_{k})\cong\Z^{k}$ sending $\{x_1,\ldots,x_n\}\mapsto \{x_1,\ldots,x_k\}$.  The collapse map $g_2:(R_n, \sigma_1)\rightarrow (R_k,\rho_0)$ also induces a surjection $\pi:H_1(R_n)\cong\Z^n\rightarrow H_1(R_{k})\cong\Z^{k}$. From $(*)$ and the fact that both $(R_n, \sigma_1)$ and $(R_n, \sigma_2)$ degenerate to $(R_k,\rho_0)$ we obtain a commutative diagram:
\begin{equation}\begin{aligned}\xymatrixrowsep{5pc}\xymatrixcolsep{5pc} \xymatrix{
(R_n,\sigma_1)\ar[dr]_{g_1}\ar[rr]^{A} &&  (R_n,\sigma_2)\ar[dl]^{g_2}\\
 & (R_k, \rho_0)}
\end{aligned}
\end{equation}
for some $A\in \GL_n(\Z)$.  Passing to homology we obtain a commutative diagram
\begin{equation}\begin{aligned}\xymatrixrowsep{5pc}\xymatrixcolsep{5pc} \xymatrix{
\Z^n\ar[dr]_{\pi_k}\ar[rr]^{A} &&  \Z^n \ar[dl]^{\pi}\\
 & \Z^k}
\end{aligned}
\end{equation}
From diagram (2) we conclude that the matrix $A$ has block form 
\[\left(\begin{array}{c|c} I_{k} &0\\\hline
B & C
\end{array}\right),\]
where $I_k$ is the $k\times k$--identity matrix, $B$ is a $(n-k)\times k$--matrix, and $C\in \GL_{n-k}(\Z)$.  Clearly, $A$ can be written a product of elementary matrices $E_{ij}$ with $1\leq i\leq n$ and $n-k\leq j\leq n$. Each $E_{ij}$ can be realized as a change of marking on $R_n$ as follows.  
Let $e_i$ denote the oriented edge in $(R_n, \text{Id})$ corresponding to the generator $x_i$.  First expand an edge which drags one end of $e_i$ over the edge $e_j$ in the same orientation as $e_j$.  This divides the former edge $e_j$ into two edges $e_j'$ and $e_j''$ such that $l(e_j)=l(e_j')+l(e_j'')$, and creates a theta graph between the edge $e_i$, $e_j'$, and $e_j''$.  Now collapsing $e_j''$ returns a rose, and the marking on the edge $e_i$ has changed by $x_i\mapsto x_i+x_j$. See figure for a schematic.

Each stage of the marking change is either a rank-$n$ rose or a rank-$(n-1)$ rose wedged with a theta graph. By collapsing all of the edges corresponding to the generators $e_j,$ $e_j'$ or $e_j''$ for $n-k\leq j\leq n$, we obtain the marked rank-$k$ rose $(R_k,\rho_0)$. 
%If $(\G,\overl{\rho})\in \calT_n$ is any graph which degenerates to $(\G_0,\overl{\rho}_0)$, then $\G$ collapses onto a rank $n$ graph $\G^+$ obtained from $\G_0$ by wedging $r_i$ circles at $\delta_i$. Choosing a maximal forest $T_i\subset \Delta_i$ we see that $(\G_i,\overl{\rho}_i)$ collapse onto homology-marked rank $n$ graphs $(\G_i^+,\overl{\rho}^+_i)$, each of which futher collapse onto $(\G_0,\overl{\rho}_0)$. To prove the lemma, it suffices to find a path from $(\G_1^+,\overl{\rho}^+_1)$ to $(\G_2^+,\overl{\rho}^+_2)$ consisting entirely of rank $n$ graphs which collapse onto $(\G_0,\overl{\rho}_0)$.  
%
%Given a marked graph $(\G,\overl{\rho})\in \overl{\calT}_n$, we define the \emph{excluded link} $\text{xlk}(\G,\overl{\rho})$ to be $\lk(\G,\overl{\rho})\cap \calT_n$.   $\text{xlk}(\G,\overl{\rho})$ consists of all marked graphs of rank $n$ which collapse to $(\G_0,\overl{\rho}_0)$.  The lowest dimensional simplices in $\text{xlk}(\G,\overl{\rho})$ correspond to graphs with isometry type $G^+$.  
\end{proof}

\textbf{Closure of the hyperelliptic locus.} As noted in Section \ref{HyperMarked}, the fixed set $\mathcal{L}^\iota$ of the involution $\iota$ is a contractible subspace of $CV_n$.  One can consider the closure of $\mathcal{L}^\iota$ in the simplicial completion $\overl{CV}_n$, denoted $\overl{\mathcal{L}^\iota}$.  Since the action of $\iota$ extends continuously to $\overl{CV}_n$, it follows that all of $\overl{\mathcal{L}^\iota}$ is fixed by $\iota$. In terms of collapsing subgraphs, if $p\in \overl{CV}_n$ is a limit point of $\mathcal{L}^\iota$, then $p$ is represented by a marked degenerate graph which admits a hyperelliptic involution.  In other words, each component of the collapsed subgraph $\Delta$ must be invariant under $\iota$. The above discussion therefore also applies to $\overl{\mathcal{L}^\iota}$, with $\Out(F_n)$ replaced by its stabilizer $\HOut$, and we obtain corresponding commutative diagrams $(*)$ and $(**)$ which are also $\iota$-invariant. We denote the closure of a subset of $CV_n$ in $\overl{CV_n}$, or one of its quotients, by an overline.  Thus, the closures of $\mathcal{L}_n^\iota$, $\mathcal{L}_n$, or $\mathcal{HL}_n$ are denoted by $\overl{\mathcal{L}_n^\iota}$, $\overl{\mathcal{L}_n}$, or $\overl{\mathcal{HL}_n}$, respectively. 

Since any rose is hyperelliptic, the proof of Lemma \ref{simplexChain} goes through for $\overl{\mathcal{HL}_n^\iota}$ without any difficulties.  The change of marking matrix $A$, however, will not be an arbitrary matrix of $\GL_n(\Z)$, but rather of $\Gamma_n[2]$. The group $\Gamma_n[2]$ is generated by the $J_i$ and $Q_{ij}$ of Corollary \ref{LevelGen}. Each $J_i$ is realized as an isometry of the rose, hence, to find the path described in the proof, it suffices to show that each $Q_{ij}$ can be realized as a sequence of $\iota$-equivariant blow-ups and blow-downs.   

Such a sequence of blow-ups and blow-downs for the generator $Q_{ij}$ will be called a \emph{basic path} for $Q_{ij}$ and will be used extensively in the next section.  To describe a basic path, we will actually work in outer space.  Given generators $x_1,x_2\in F_n$, Figure~\ref{palpath} depicts a path in $CV_2$ that realizes the automorphism $P_{12}: x_1\mapsto x_2x_1x_2$. For $n >2$, we wedge a further $n-2$ circles at the base point in each graph. Clearly the image of $P_{12}$ in $\GL_n(\Z)$ is the generator $Q_{12}$. 

\begin{figure}[h]
\centering
\includegraphics[width=4in]{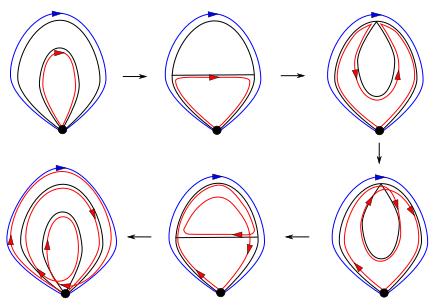}
\caption{In the initial marked rose, let the red loop indicate the generator $x_1$ and the blue loop $x_2$. Blowing up and down as indicated, the final rose is marked according to $P_{12}$, up to an isometry. Note that when transitioning from top to bottom row, we do not move in outer space: rather, we apply an isometry to the graph, before continuing on the path.}
\label{palpath}
\end{figure}

For completeness, we state the hyperelliptic version of Lemma \ref{simplexChain}.  
\begin{lemma} \label{simplexChain2}Let $M_1,M_2$ be maximal simplices in $\mathcal{HL}_n^\iota$ whose closures $\overl{M_1},\overl{M_2}\subset \overl{\mathcal{HL}_n^\iota}$ share a degenerate face $F_0$. Suppose that the barycenter of $F_0$ corresponds to a marked rank $k\leq n$ rose $R_k$ with a single degenerate vertex of rank $n-k$ at the wedge point.  Then there exists a chain of maximal simplices $M_1=S_1, S_2,\ldots, S_m=M_2\in \mathcal{L}_n^\iota$ such that $M_i\cap M_{i+1}\neq \emptyset $ for all  $i=1,\ldots, m-1$, and $F_0\subset \overl{S_i}\cap \overl{S_j}$ for all $i\neq j$. 
\end{lemma}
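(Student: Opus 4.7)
The plan is to adapt the proof of Lemma~\ref{simplexChain} verbatim, restricting every step to $\iota$-equivariant graphs and $\iota$-equivariant blow-up/blow-down moves. Taking barycenters $(\G_1, \rho_1) \in M_1$ and $(\G_2, \rho_2) \in M_2$, together with the barycenter $(R_k, \rho_0)$ of $F_0$, I would first choose the maximal trees $T_i \subset \Delta_i \subset \G_i$ to be $\iota$-invariant (which is possible since any finite-order graph automorphism of a connected graph admits an invariant maximal tree). Collapsing each $T_i$ factors the collapse $c_i : \G_i \to R_k$ through an intermediate hyperelliptic marked rose $(R_n, \sigma_i) \in M_i$. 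The task is thereby reduced to constructing an $\iota$-equivariant path in $\mathcal{L}_n^\iota$ from $(R_n, \sigma_1)$ to $(R_n, \sigma_2)$ through rank-$n$ graphs that all collapse onto $(R_k, \rho_0)$.

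The change-of-marking matrix $A$ relating $\sigma_1$ and $\sigma_2$ inherits the block form $\left(\begin{smallmatrix} I_k & 0 \\ B & C \end{smallmatrix}\right)$ from the proof of Lemma~\ref{simplexChain}. In addition, because both roses lie in $\mathcal{L}_n^\iota$, they are acted upon by the same hyperelliptic involution, so $A$ lies in $\HMn$, and as remarked in the paragraphs preceding the lemma, after possibly modifying by rose isometries corresponding to the signed symmetric group, we may assume $A \in \Gamma_n[2]$. By Corollary~\ref{LevelGen}, $A$ decomposes as a product of the generators $J_i$ and $Q_{ij}$; the block-form constraint ensures that the generators appearing in this decomposition act only on the last $n - k$ basis vectors, that is, involve only indices in $\{k+1, \ldots, n\}$ in the appropriate slot.

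It then suffices to realize each generator in this decomposition as an $\iota$-equivariant path of rank-$n$ marked graphs that collapse onto $(R_k, \rho_0)$. Each inversion $J_i$ with $i > k$ is realized as an isometry of the rose reversing the orientation of the $i$-th wedge circle; this isometry commutes with $\iota$, does not move us within outer space, and preserves the collapse onto $(R_k, \rho_0)$. Each elementary transvection $Q_{ij}$ is realized by the basic path of Figure~\ref{palpath}, which is $\iota$-equivariant because the defining automorphism $P_{ij} : x_i \mapsto x_j x_i x_j$ lies in the palindromic group $\pia_n$: each intermediate theta graph has the two halves of the blown-up edge interchanged by $\iota$, and the isometries applied between the rows of Figure~\ref{palpath} are themselves $\iota$-symmetric. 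Since the basic path only modifies the wedge circles corresponding to $x_i$ and $x_j$, both of which lie in the collapsed subgraph $\Delta$ by our index restriction, every intermediate graph still collapses onto $(R_k, \rho_0)$.

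The main obstacle is confirming the simultaneous $\iota$-equivariance and collapse-preservation of the basic paths from Figure~\ref{palpath}. The former follows from the palindromic construction of $P_{ij}$, while the latter follows from the block-form restriction on $A$ forcing all generators in its decomposition to act only on the directions collapsed by $g_i$. Concatenating these basic paths with the isometries realizing the $J_i$'s yields the desired chain of maximal simplices $M_1 = S_1, \ldots, S_m = M_2$ in $\mathcal{L}_n^\iota$ with $F_0 \subset \overl{S_i} \cap \overl{S_j}$ for all $i \neq j$.
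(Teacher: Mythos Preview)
Your proposal takes essentially the same approach as the paper. The paper's own argument is exactly the discussion immediately preceding the lemma statement: the proof of Lemma~\ref{simplexChain} carries over verbatim, the change-of-marking matrix $A$ now lies in $\Gamma_n[2]$ rather than all of $\GL_n(\Z)$, each $J_i$ is realized as a rose isometry, and each $Q_{ij}$ is realized by the $\iota$-equivariant basic path of Figure~\ref{palpath}. Your write-up supplies more detail but is structurally identical.

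One small caution: your parenthetical justification that \emph{any finite-order graph automorphism of a connected graph admits an invariant maximal tree} is false as stated (consider the involution swapping the two edges of a $2$-cycle, or a nontrivial rotation of an odd cycle). The paper avoids this by observing only that every rose is hyperelliptic, so the intermediate marked rose produced by collapsing a maximal tree automatically lies in $\mathcal{HL}_n^\iota$; you do not actually need the tree itself to be $\iota$-invariant for the argument to proceed.
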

%%%%%%%%%%%%%%%%%

\subsection{Simple-connectivity of the hyperelliptic locus}\label{goal}
In this section we prove Theorem C. We remark that our techniques also prove the corresponding statement for the entire space $\calT_n$, using Lemma~\ref{simplexChain} and the fact that $\calI_n$ is normally generated by a single partial conjugation.

The strategy divides into two parts.  First, we show that in the closure of hyperelliptic Torelli space $\overl{\mathcal{HL}^\iota_n}$, loops corresponding to the generators for $\mathcal{ST}(n)$ all bound disks.  This proves that the inclusion $i:\mathcal{HL}^\iota_n\rightarrow \overl{\mathcal{HL}^\iota_n}$ induces the trivial map on fundamental groups.  In the second part, we show that there is no contribution to $\pi_1(\overl{\mathcal{HL}^\iota_n})$ coming from the boundary. To do this, we show that any loop $\gamma\subset \overl{\mathcal{HL}^\iota_n}$ based at the identity marked rose $(R_n,\text{Id})$ can be homotoped rel the basepoint to lie entirely within $\mathcal{HL}^\iota_n$.  By the first part we then conclude that $\gamma$ is can be contracted to a point.

Let $p_0=(R_n, \text{Id})$ denote the identity marked rank-$n$ rose, with all edge lengths equal. We will abuse notation and refer to this either as the point in $CV_n$ or $\calT_n$. We have
\begin{lemma} \label{inclusion}The homomorphism $i_*:\pi_1(\mathcal{HL}^\iota_n,p_0)\rightarrow \pi_1(\overl{\mathcal{HL}^\iota_n},p_0)$, induced by inclusion, is trivial.
\end{lemma}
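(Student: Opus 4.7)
The plan is to invoke Theorem~\ref{hyptor} to reduce the claim to killing individual doubled commutator transvections in $\pi_1(\overl{\mathcal{HL}^\iota_n})$. Since $\pi_1(\mathcal{HL}^\iota_n) \cong \STn$ by Corollary~\ref{eilmac} and $\STn$ is generated by doubled commutator transvections, it suffices to show that for each such transvection $\phi$, the corresponding loop $\gamma_\phi$ in $\mathcal{HL}^\iota_n$ based at $[p_0]$ bounds a disk in $\overl{\mathcal{HL}^\iota_n}$.

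Fix $\phi \colon c \mapsto [a,b]\, c\, [b^{-1}, a^{-1}]$ with $a, b, c$ distinct elements of a hyperelliptic basis, and let $p_\infty \in \overl{\mathcal{L}^\iota_n}$ denote the degenerate marked graph obtained from $p_0$ by collapsing its $c$-edge; this is a rank-$(n-1)$ rose with a single degenerate vertex of order $1$ at the wedge point, corresponding to the free splitting $F_n = F_{n-1} \ast \langle c \rangle$. Since the $c$-edge is $\iota$-invariant, $p_\infty \in \overl{\mathcal{L}^\iota_n}$. The geometric crux is that $\phi \cdot p_\infty = p_\infty$: the transvection acts trivially on the free factor $F_{n-1}$ and conjugates the vertex group $\langle c \rangle$ by $[a,b] \in F_{n-1}$, but any conjugation of a vertex group by an ambient element preserves the free splitting up to the equivalence defining marked degenerate graphs. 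Thus $[p_\infty] \in \overl{\mathcal{HL}^\iota_n}$ is a natural apex to which $\gamma_\phi$ can be coned.

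To construct the filling disk, I would realize $\gamma_\phi$ by a basic path $\tilde\gamma \colon [0,1] \to \mathcal{L}^\iota_n$ from $p_0$ to $\phi \cdot p_0$, assembled from the elementary palindromic basic paths of Figure~\ref{palpath}. Arrange $\tilde\gamma$ so that each intermediate marked graph $\tilde\gamma(s)$ carries a distinguished edge tracking $c$, and for each $s$ let $\alpha_s \colon [0,1] \to \overl{\mathcal{L}^\iota_n}$ be the path that shrinks this distinguished edge to length zero (rescaling the remaining edges to preserve unit volume). One verifies $\alpha_0(1) = p_\infty = \phi \cdot p_\infty = \alpha_1(1)$, and $\alpha_1 = \phi \cdot \alpha_0$ by construction. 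Composing with the quotient map $\pi \colon \overl{\mathcal{L}^\iota_n} \to \overl{\mathcal{HL}^\iota_n}$ and invoking $\phi \in \STn$, the map $H(s,t) := \pi \circ \alpha_s(t)$ satisfies $H(0,t) = H(1,t)$ and $H(s,1) = [p_\infty]$, and hence descends to a continuous map $D^2 \to \overl{\mathcal{HL}^\iota_n}$ with boundary $\gamma_\phi$.

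The hard part is verifying that $H(s,1) = [p_\infty]$ for \emph{intermediate} $s$. Along $\tilde\gamma$, the marked graph $\tilde\gamma(s)$ may carry blown-up edges not present in $R_n$, so collapsing only the tracked $c$-edge need not literally reach $p_\infty$ in $\overl{\mathcal{L}^\iota_n}$. I would remedy this by appending to $\alpha_s$ an auxiliary collapse of the remaining blown-up edges, and appeal to Lemma~\ref{simplexChain2} to confirm that the resulting family of terminal degenerate graphs lies in a single $\STn$-orbit, projecting to the constant path $[p_\infty]$ in $\overl{\mathcal{HL}^\iota_n}$. Implementing this tracking reduces to a direct inspection of the elementary basic paths in Figure~\ref{palpath}, which is routine but requires care in following the distinguished $c$-edge through each blow-up and blow-down.
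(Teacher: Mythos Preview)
Your high-level strategy matches the paper's: invoke Theorem~\ref{hyptor} and show each doubled commutator transvection bounds a disk by coning the realizing basic path to a degenerate graph. The key difference is \emph{which} edges you degenerate, and your choice leads to a genuine error.

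You collapse the transvected letter $c$ and assert as your ``geometric crux'' that $\phi \cdot p_\infty = p_\infty$ because $\phi$ conjugates $\langle c \rangle$ by $[a,b]$. This is false: $\phi(c) = [a,b]\,c\,[b^{-1},a^{-1}]$, and $[b^{-1},a^{-1}] = b^{-1}a^{-1}ba \neq bab^{-1}a^{-1} = [a,b]^{-1}$. The word $[a,b]\,c\,[b^{-1},a^{-1}]$ is cyclically reduced of length $9$, hence not conjugate to $c^{\pm 1}$, so $\phi$ does \emph{not} preserve the free splitting $F_{n-1} * \langle c \rangle$ and $\phi \cdot p_\infty \neq p_\infty$ in $\overl{\mathcal{L}^\iota_n}$. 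What remains true, tautologically, is that $[\phi \cdot p_\infty] = [p_\infty]$ in the $\STn$-quotient---but that holds for any boundary point and carries no content toward building the disk. Your acknowledged ``hard part'' (showing $H(s,1)$ is constant for intermediate $s$) is therefore the entire argument, and your proposed appeal to Lemma~\ref{simplexChain2} does not address it: that lemma produces chains of simplices sharing a fixed degenerate face, not identifications of varying degenerate graphs within a single $\STn$-orbit. Moreover, along the basic path the tracked $c$-edge is sometimes an arc of a theta graph rather than a loop, so collapsing it alone does not even land in $\partial\overl{\mathcal{L}^\iota_n}$; the ad hoc ``auxiliary collapses'' you propose would have to be specified carefully and shown to assemble continuously.

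The paper sidesteps all of this by degenerating the \emph{other} two letters. For $\tau^1_{23}: x_1 \mapsto [x_2,x_3]\,x_1\,[x_3^{-1},x_2^{-1}]$ it shrinks the $x_2$- and $x_3$-loops. The basic paths realizing $P_{12}^{\pm 1}, P_{13}^{\pm 1}$ only blow up and collapse edges whose total length is governed by $\ell(x_2)$ or $\ell(x_3)$; when those lengths go to zero the entire path collapses to the constant path at a rank-$(n-2)$ degenerate rose. No orbit analysis or tracking is needed---the homotopy is visibly a cone. Switching your degeneration from $c$ to $\{a,b\}$ would repair your argument and bring it in line with the paper's.
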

\begin{proof}
From Theorem B, we know that $\mathcal{ST}(n)$ is generated by doubled commutator transvections, and that $\pi_1(\mathcal{HL}^\iota_n,p_0)\cong\mathcal{ST}(n)$.  To prove the lemma, it suffices to show that each loop representing a generating doubled commutator transvection based at $x_0$ bounds a disk in $\overl{\mathcal{HL}^\iota_n}$. We do this explicitly. 

Given generators $x_1,x_2,x_3$, consider the doubled commutator transvection \[\tau_{23}^1:x_1\mapsto[x_2,x_3]x_1[x_3^{-1},x_2^{-1}],\] acting as the identity on the other generators.  Starting from the rank-$n$ rose, $n\geq 3$, we can realize $\tau_{23}^1$ as the concatenation of basic paths for $Q_{12}\circ Q_{13}\circ Q_{12}^{-1}\circ Q_{13}^{-1}$.
%\[x_1\xmapsto{Q_{13}^{-1}}x_3^{-1}x_1x_3^{-1}\xmapsto{Q_{12}^{-1}}x_2^{-1}x_3^{-1}x_1x_3^{-1}x_2^{-1}\xmapsto{Q_{13}}x_3x_2^{-1}x_3^{-1}x_1x_3^{-1}x_2^{-1}x_3\xmapsto{Q_{12}^{-1}}[x_2,x_3]x_1[x_3^{-1},x_2^{-1}].\]

At each step in the definition of the basic path, no assumptions were made about the lengths of the edges, only the homeomorphism type of each intermediate graph. If our basepoint $p_0$ corresponds to the metric graph with all edge lengths equal to $1/n$, we can carry out the basic path without changing the lengths of any of the $n-1$ edges labelled by generators $x_i$, $i=1$ or $3\leq i\leq n$, and such that at each stage, the sum of the lengths of edges involving $x_2$ is constantly $1/n$.  Let this path be denoted $\alpha_0$.  

Now consider the family of identity marked roses $p_t=(R_n^{t},\text{Id})$ in which the lengths of loops labelled $x_2,x_3$ are $(1-t)/n$, for $t\in[0,1]$ and the lengths of the remaining $n-2$ edges are each $\frac{n-2+2t}{n(n-2)}$. The collection of basic paths $\alpha_t$, where $\alpha_t$ starts at $p_t$, defines a homotopy $H:S^1\times[0,1]\rightarrow \overl{\mathcal{HL}^\iota_n}$ from the basic path $\alpha_0$ at $p_0$ to a basic path $\alpha_1$ based at $R_n^1$, a rank-$(n-2)$ degenerate rose in the boundary (see Figure~\ref{degendisk}).  However, since the lengths of the loops $x_2$ and $x_3$ in $R_{n}^1$ are 0, it follows that $\alpha_1$ is the constant path.  Thus $H$ factors through a map of the disk $h:D^2\rightarrow \overl{\mathcal{HL}^\iota_n}$, so $\alpha_1$ is nullhomotopic. Since all doubled commutator transvections are conjugate to $\tau_{23}^1$, they are also represented by nullhomotopic loops in $\overl{\mathcal{HL}^\iota_n}$, which proves the lemma.  
\begin{figure}[h]
\centering
\begin{overpic}[width=4in]{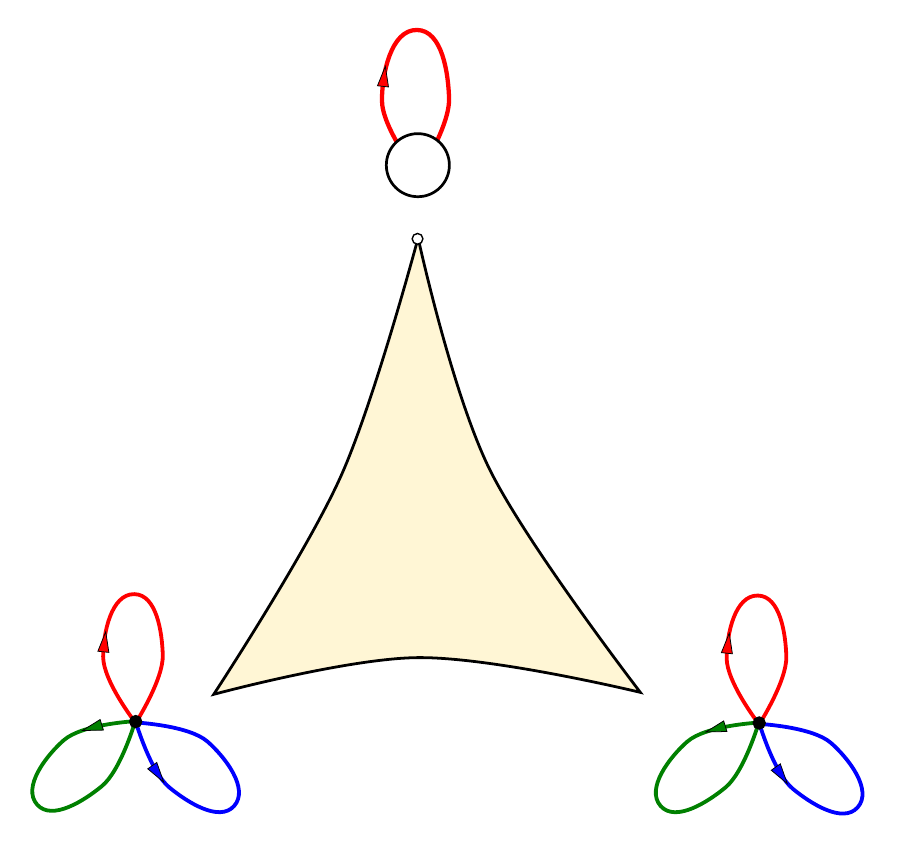}
\put(73,16){$\tau_{23}^1 \cdot$}
\put(44.5,76){$F_2$}
\end{overpic}
\caption{The disk in $\overl{\mathcal{HL}_n^{\iota}}$ over which $\tau_{23}^1$ is nullhomotopic.}
\label{degendisk}
\end{figure}
\end{proof}

Lemma \ref{inclusion} accomplishes the first part of our strategy for proving Theorem \ref{locusconn}.  The second part of our strategy is accomplished by the following lemma.
\begin{lemma} Let $\gamma\subset \overl{\mathcal{HL}^\iota_n}$ be any loop based at $p_0$.  Then $\gamma$ is homotopic rel $p_0$ to $\gamma''$, where $\gamma''$ lies in $\mathcal{HL}^\iota_n$.
\end{lemma}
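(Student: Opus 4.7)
My plan is to homotope $\gamma$ off the boundary $\overl{\hloci} \setminus \hloci$ using a general position argument combined with Lemma~\ref{simplexChain2}. By simplicial approximation I may assume $\gamma$ is piecewise linear in $\overl{\hloci}$. Using the paper's observation that no single-edge collapse of a maximal simplex in reduced outer space lies in $\partial \overl{CV}_n$, the boundary of $\overl{\hloci}$ has codimension at least $2$ in the ambient simplicial complex. A generic perturbation of $\gamma$ therefore meets this boundary in only finitely many isolated points, each lying in the interior of a codimension-$2$ face.

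Next, I would arrange by a further perturbation that each such face $F_0$ containing a crossing point has barycenter a marked rose of smaller rank with a single degenerate vertex at the wedge point, thereby matching the hypothesis of Lemma~\ref{simplexChain2}. At each crossing point $p \in \gamma \cap F_0$, the loop enters $F_0$ from a maximal simplex $M_1 \subset \hloci$ and exits into another maximal simplex $M_2 \subset \hloci$. Applying Lemma~\ref{simplexChain2} yields a chain $M_1 = S_1, S_2, \ldots, S_m = M_2$ of maximal simplices in $\hloci$, each containing $F_0$ in its closure and with $S_i \cap S_{i+1} \neq \emptyset$. I then replace the portion of $\gamma$ passing through $p$ by a short detour that stays in the union of the $S_i$, transitioning between consecutive simplices across their shared faces. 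Since the chain lies in $\hloci$ and a neighborhood of $F_0$ in $\overl{\hloci}$ is contractible (being a cone on the link of $F_0$), this detour is homotopic rel endpoints to the original segment of $\gamma$. Performing these local modifications at each crossing yields a loop $\gamma''$ in $\hloci$, which combined with Lemma~\ref{inclusion} will complete the proof of Theorem~\ref{locusconn}.

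The main obstacle is the second perturbation: ensuring that every boundary crossing passes through a rose-type codimension-$2$ face to which Lemma~\ref{simplexChain2} applies. Not every codimension-$2$ boundary face is of rose type (collapsing two disjoint self-loops at distinct vertices produces a graph with two degenerate vertices rather than a single one), so one must either extend Lemma~\ref{simplexChain2} to handle such face types or exploit the fact that the non-rose faces sit on the boundary of higher-codimension rose faces. In the latter case, a careful perturbation near each non-rose crossing should be able to nudge $\gamma$ across a nearby rose face of the same codimension, after which the argument above applies verbatim.
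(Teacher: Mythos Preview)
Your overall strategy—arrange $\gamma$ to meet the boundary in isolated points, then invoke Lemma~\ref{simplexChain2} to detour around each—is the same as the paper's. The gap is precisely the step you flag as ``the main obstacle'': you never establish that the boundary crossings can be taken to lie in faces of the specific form Lemma~\ref{simplexChain2} requires (a rose $R_k$ with a single degenerate vertex at the wedge point). Your proposed fixes are not convincing: a generic codimension-$2$ boundary face of $\overl{\hloci}$ need not be a rose, and the claim that non-rose faces can be nudged to nearby rose faces of the same codimension is unsupported. Note also that your codimension-$2$ claim is borrowed from reduced outer space; in $\overl{\hloci}$ the simplicial structure is governed by $\iota$-equivariant collapses, so even this preliminary step would need its own argument.

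The paper sidesteps the obstacle entirely by a different reduction. Rather than using transversality to obtain codimension-$2$ crossings, it first pushes $\gamma$ into the $1$-skeleton of $\overl{\hloci}$ (which, for $n \geq 3$, lies entirely in the boundary), and then pushes the interior of each $1$-cell, rel its endpoints, into the interior of an incident maximal simplex. After this, the only points of $\gamma$ still in the boundary are \emph{vertices} of the complex. A vertex of $\overl{\hloci}$ is a marked $S^1$ with a single degenerate vertex of rank $n-1$—that is, a rose $R_1$—so Lemma~\ref{simplexChain2} applies with $k=1$ at every crossing, and the cone over the resulting chain of maximal simplices supplies the homotopy. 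This $1$-skeleton trick is the missing idea: it forces every boundary crossing to be of the required rose type automatically, rather than trying to perturb into a special position after the fact.
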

\begin{proof} 

Let $\gamma$ be any loop in $\overl{\mathcal{HL}^\iota_n}$, based at $p_0$. If $\gamma$ lies entirely within $\mathcal{HL}^\iota_n$, then by Lemma \ref{inclusion}, it is nullhomotopic, and we are done.  We can therefore assume that $\gamma$ meets $\partial \overl{\mathcal{HL}^\iota_n}$. Observe that for $n\geq 3$, the space $\partial \overl{\mathcal{HL}^\iota_n}$ contains the 1-skeleton of the complex $\overl{\mathcal{HL}^\iota_n}$. Thus, up to free homotopy, we may represent $\gamma$ as a path in this 1-skeleton that traverses finitely many edges (and hence passes through finitely many vertices). Each arc of $\gamma$ traveling along a single edge may be pushed, relative to its endpoints, into the interior of a maximal simplex of $\partial \overl{\mathcal{HL}^\iota_n}$ (that is, passes through $\mathcal{HL}^\iota_n$). 

The above procedure allows us to assume that $\gamma$ is of the form $\gamma=\gamma_1'*\cdots* \gamma_r'$, where $*$ denotes concatenation of paths, and each $\gamma_i':[0,1] \rightarrow \overl{\mathcal{HL}^\iota_n}$ is a path satisfying: $\gamma_i'(0,1)\subset \text{Int}(\Delta_i)$, for some maximal simplex $\Delta_i$. Hence, each $\gamma_i'$ meets $\partial\overl{\mathcal{HL}^\iota_n}$ at worst at its endpoints.

If $\gamma_i'(1)\in \partial \overl{\mathcal{HL}^\iota_n}$, we will call it a \emph{degenerate point} of $\gamma$.  To prove the lemma, it suffices to show that each degenerate point of $\gamma$ can be pushed into the interior of $\overl{\mathcal{HL}^\iota_n}$. Let $z_i=\gamma_i'(1)$ for some $i$, $1\leq i\leq r$, and suppose that $z_i$ is a degenerate point corresponding to a degenerate, homology-marked hyperelliptic graph $(\Gamma_0,\rho_0)$. Note that $(\G_0, \rho_0)$ is a marked $S^1$, with a single degenerate vertex of rank $n-1$.

We now apply Lemma~\ref{simplexChain2} to the degenerate point $z_i$.  Let $\alpha_i$ be the path from $\Delta_i$ to $\Delta_{i+1}$ given by Lemma~\ref{simplexChain2}. The path $\alpha_i$ does not depend on the lengths of the edges of intermediate graphs. This means that if $\Delta_i=S_1, S_2,\ldots, S_m=\Delta_{i+1}$ is the chain of maximal simplices containing $\alpha_i$, then $\overl{S}_1\cup\cdots\cup \overl{S}_m$ actually contains the cone on $\alpha_i$ with cone point $z_i$. Using this cone, we can clearly homotope $\gamma$ so that $\gamma'_i(1)\in \mathcal{HL}^\iota_n$.  The proof is then completed by induction on the number of degenerate points of $\gamma'$.  
\end{proof}

\bibliographystyle{abbrv}
\bibliography{HyperBib}

\end{document}